\DeclareFontFamily{OML}{script}{}{}
\DeclareFontShape{OML}{script}{m}{it}
{ <5-20> rsfs10 }{}
\DeclareMathAlphabet{\mathscript}{OML}{script}{m}{it}
\renewcommand{\mathcal}[1]{{\mathscript #1}\hspace{0.2ex}}
\newcommand{\red}{\color{red}}
\newcommand{\blue}{\color{blue}}
\newcommand{\text}{\mbox}
\newcommand{\operatorname}{\mathop}
\newcommand{\ki}{{\mbox{\raise.5ex\hbox{$\chi$}}\hspace{.2ex}}}
\newcommand{\re}[1]{\mbox{\rm$($\ref{#1}$)$}}
\newcommand{\e}{\varepsilon}
\renewcommand{\epsilon}{\varepsilon}
\newcommand{\p}{\partial}
\newcommand{\g}{\gamma}
\renewcommand{\bar}{\overline}
\newcommand{\dis}{\displaystyle}
\newcommand{\la}{\lambda}
\newcommand\be{\begin{equation}}
\newcommand\ee{\end{equation}}
\newcommand\bea{\begin{eqnarray}}
\newcommand\eea{\end{eqnarray}}
\newcommand\beaa{\begin{eqnarray*}}
\newcommand\eeaa{\end{eqnarray*}}
\newcommand{\dist}{\mathrm{dist}}
\renewcommand{\le}{\leqslant}
\renewcommand{\leq}{\leqslant}
\renewcommand{\ge}{\geqslant}
\renewcommand{\geq}{\geqslant}
\newenvironment{eqa}{\begin{equation}%
  \begin{array}{rcl}}{\end{array}\end{equation}}
\newcommand\beqa{\begin{eqa}}
\newcommand\eeqa{\end{eqa}}
\numberwithin{equation}{section}
\newtheorem{thm}{Theorem}[section]
\newtheorem{cor}[thm]{Corollary}
\newtheorem{lem}{Lemma}[section]
\newtheorem{rem}{Remark}[section]
\newtheorem{prop}{Proposition}[section]
\theoremstyle{definition}
\newcommand{\void}[1]{}
\newcommand{\m}{\hspace{1em}}
\newcommand{\mm}{\hspace{2em}}
\begin{document}\begin{CJK}{UTF8}{gkai}

\title[LIOUVILLE-TYPE THEOREMS AND EXISTENCE RESULTS]{LIOUVILLE-TYPE THEOREMS AND EXISTENCE OF SOLUTIONS FOR QUASILINEAR ELLIPTIC EQUATIONS WITH NONLINEAR GRADIENT TERMS}
\author{ Caihong Chang, Bei Hu and Zhengce Zhang }
\date{\today}
\address[Caihong Chang]{School of Mathematics and Statistics, Xi'an Jiaotong University,
Xi'an, 710049, P. R. China}
\email{caihong666@stu.xjtu.edu.cn}
\address[Bei Hu]{Department of Applied and Computational Mathematics and Statistics, University of Notre Dame, Notre Dame, IN 46556, USA}
\email{b1hu@nd.edu}
\address[Zhengce Zhang]{School of Mathematics and Statistics, Xi'an Jiaotong University,
Xi'an, 710049, P. R. China}
\email{zhangzc@mail.xjtu.edu.cn}
\thanks{Corresponding author: Zhengce Zhang}
\thanks{Keywords: $m$-Laplacian; Gradient terms; Bernstein-type estimates; Liouville-type theorems; Harnack inequalities; A priori estimates; Blow-up argument.}
\thanks{2020 Mathematics Subject Classification: 35J60, 35J92, 35J70}
\thanks{ }

\maketitle
\begin{abstract}
This paper is concerned with two properties of positive weak solutions of quasilinear elliptic equations with nonlinear gradient terms. First, we show a Liouville-type theorem for positive weak solutions of the equation involving the $m$-Laplacian operator
\begin{equation*}
-\Delta_{m}u=u^q|\nabla u|^p\ \ \ \mathrm{in}\ \mathbb{R}^N,
\end{equation*}
where $N\geq1$, $m>1$ and $p,q\geq0$.  The technique of Bernstein gradient estimates
is ultilized to study  the case $p<m$. Moreover, a Liouville-type theorem for supersolutions under subcritial range of exponents
\begin{equation*}
q(N-m)+p(N-1)<N(m-1)
\end{equation*}
is also established.
Then,   we use a degree argument to obtain the existence of positive weak solutions for a nonlinear Dirichlet problem of the type $-\Delta_m u = f(x,u,\nabla u)$, with $f$ satisfying certain structure conditions. Our proof is based on a priori estimates, which will be accomplished by using a blow-up argument together with the Liouville-type theorem   in the half-space.
As another application, some new Harnack inequalities are proved.
\end{abstract}

\maketitle
\tableofcontents
\section{Introduction}
The first primary goal of this paper is to derive Liouville-type theorems for positive weak solutions of the following elliptic equation
\begin{equation}\label{1.1}
-\Delta_{m}u=u^q|\nabla u|^p\ \ \ \mathrm{in}\ \mathbb{R}^N,
\end{equation}
where $N\geq1$, $p,q\geq0$, and $\Delta_{m}u=\mathrm{div}(|\nabla u|^{m-2}\nabla u)$ is the $m$-Laplacian operator with $m>1$.

The background of the equation \eqref{1.1} is rich. Barenblatt {\em et.\ al.} \cite{BBCP} proposed the following equation
\begin{equation}\label{6666666}
\gamma h_t=\kappa \frac{\partial^2(h^2)}{\partial x^2}+\mu\left|\frac{\partial h}{\partial x}\right|^2,\ \ \ h\geq0,
\end{equation}
as a qualitative mathematical model studying the ground water flow in a water-absorbing fissured porous rock in one spacial dimension.  Here $h$ is the groundwater level, $t$ is the time, $x$ is the horizontal space coordinate along the impermeable bed. The parameters $\gamma$, $\kappa$ and $\mu$  characterize the medium and the interaction fluid-rock, which are assumed to be constants. The parameter $\mu$ could be positive or negative according to the rock non-fissured or fissured respectively. Peral \cite{P} analyzed the corresponding porous media equation (i.e., $\alpha>1$) and the fast diffusion equation (i.e., $0<\alpha<1$)
\begin{equation}\label{222}
v_t-\Delta\left(v^{\alpha}\right)=|\nabla v|^p+f(x,t).
\end{equation}
For the stationary problem of \eqref{222}, putting $u=v^{\alpha}$, we get
\begin{equation}\label{0.91}
-\Delta u=u^{p\beta}|\nabla u|^p+\lambda f,\ \ \ \beta=\left(\frac{1}{\alpha}-1\right)\in(-1,\infty).
\end{equation}
Clearly, the mixed nonlinearity of \eqref{0.91} is in the form of the product of a power of $u$ and a power of $|\nabla u|$, which is consistent with the form of nonlinearity in \eqref{1.1}. In the present paper,  Laplacian operator $\Delta$ in \eqref{0.91} is generalized to $m$-Laplacian operator $\Delta_m$ with $m>1$ (i.e., \eqref{1.1}), and more abundant conclusions are obtained.

The Liouville-type theorems go back to the early work \cite{C}, Augustin Cauchy showed that any bounded entire function of a single complex variable must be constant. Protter and Weinberger \cite{PW} generalized this classical result to real harmonic functions which are bounded either from above or below, and superharmonic functions which are bounded from below. More general nonlinearities and operators of the same problem, such as \eqref{1.1}, were studied in some other works. We shall list these works in different cases below.

The case without gradient terms, equation \eqref{1.1}, when $m=2$ and $p=0$, reduces to the famous Lane-Emden equation, i.e.,
\begin{equation}\label{1.2}
-\Delta u=u^q\ \ \ \mathrm{in}\ \mathbb{R}^N.
\end{equation}
In the pioneering reference \cite{GS}, Gidas and Spruck asserted that there is no positive solution of \eqref{1.2} with $N>2$ and $1\leq q<\frac{N+2}{N-2}$ (Sobolev exponent), which is a sharp result. Bidaut-V$\mathrm{\acute{e}}$ron \cite{M} considered \eqref{1.2} on an exterior domain, and obtained the same conclusion with smaller range $1<q<\frac{N}{(N-2)_+}$ (Serrin exponent). Serrin and Zou \cite{SZ} generalized the result to the inequality
\begin{equation*}
-\Delta u\geq u^q\ \ \ \mathrm{in}\ \Omega,
\end{equation*}
where $\Omega\subset\mathbb{R}^N$ is a connected open set and $0<q<\frac{N}{(N-2)_+}$, which is the optimal condition. Recently, Dupaigue {\em et.\ al.} \cite[Theorem 1.1]{DSS} studied the problem in the half-space
\begin{equation}\label{0.111}
\left\{\begin{array}{ll}
-\Delta u=u^q\ \ \ &\mathrm{in}\ \mathbb{R}^N_+\\
u=0\ \ \ &\mathrm{on}\ \partial\mathbb{R}^N_+,
\end{array}\right.
\end{equation}
where
\begin{equation}\label{0.222}
\mathbb{R}^N_+=\{(x_1,...x_{n-1},x_n)\in\mathbb{R}^N,\ x_n>0\}.
\end{equation}
They asserted that the equation \eqref{0.111} does not admit any positive classical solution which is monotone in the normal direction. The conclusion could be regraded as an improvement of \cite[Theorem 1.1]{SS}, which established that there is no positive solution of the equation \eqref{0.111} satisfying the growth condition
\begin{equation*}
u(x)\leq C(1+x_n)^{\frac{2}{q-1}}
\end{equation*}
for some $C>0$. The Liouville-type theorems in $\mathbb{R}^N$ and $\mathbb{R}_+^N$ are also studied for an elliptic system in \cite{PQS}. Some scholars extended the above considerations to degenerate elliptic problems. Bidaut-V$\mathrm{\acute{e}}$ron and Pohozaev \cite{BP} studied the inequality
\begin{equation}\label{A1}
-\Delta_mu\geq u^{p-1}\ \ \ \mathrm{in}\ \Omega,
\end{equation}
where $\Omega$ is an exterior domain. They presented that the equation \eqref{A1} admits only the trivial solution $u\equiv0$, provided $p\in(1,\infty)$ when $N=m$, or $p\in\left(1,\frac{m(N-1)}{N-m}\right)$ when $N>m$, which was first proved by Mitidieri and Pokhozhaev \cite{MP1}. Serrin and Zou \cite{SZ} solved the case $p\in(1,\infty)$ when $N<m$. If $\Omega=\mathbb{R}^N$, the range of exponent $p$ would be larger, see \cite[Theorem II]{SZ}. Meanwhile, \cite{SZ} studied the problems $-\Delta_mu\geq0$ and $\Delta_mu+f(u)=0$ under the suitable assumptions on $f$.

The case with gradient terms, equation \eqref{1.1}, when $m=2$ and $q=0$, reduces to the stationary case of the viscous Hamilton-Jacobi equation, i.e.,
\begin{equation}\label{1.3}
-\Delta u=|\nabla u|^p\ \ \ \mathrm{in}\ \Omega.
\end{equation}
Lions \cite[Corollary IV]{L} asserted that any classical solution of \eqref{1.3} with $p>1$ in $\mathbb{R}^N$ has to be constant. The authors of \cite{PV,FPS} have studied \eqref{1.3} in the half-space \eqref{0.222} with Dirichlet boundary conditions. When $p\in(1,2]$, Porretta and V$\mathrm{\acute{e}}$ron \cite{PV} proved that the solution of \eqref{1.3} depends only on the variable $x_n$. The proof was based on the existence of a finite limit as $x_n\to\infty$. When $p\in(2,\infty)$, Filippucci {\em et.\ al.} \cite[Theorem~1.1]{FPS} obtained a similar result, which was proved by combining a moving planes technique, Bernstein-type estimates and a compactness argument.

For the case with mixed reaction terms involving the product of the function and its gradient, we state some Liouville-type results of supersolutions, firstly. Burgos-P$\acute{\mathrm{e}}$rez {\em et.\ al.} \cite[Corollary 3]{BGQ} studied the inequality
\begin{equation}\label{1.4}
-\Delta u\geq f(u)|\nabla u|^p\ \ \ \mathrm{in}\ \mathbb{R}^N
\end{equation}
under the assumptions
\begin{equation}\label{1.5}
\left\{\begin{array}{ll}
N\geq3,\ 0<p\leq\frac{N}{N-1},\ f\in C\left([0,+\infty)\right)\ \mathrm{is\ positive\ in}\ (0,\infty),\\
\int_{0}^{\delta}\frac{f(t)}{t^{\theta}}\mathrm{dt}<+\infty\ \mathrm{does\ not\ hold\ for\ some}\ \delta>0\ \mathrm{and}\ \theta=\frac{(2-p)(N+1)}{N-2},
\end{array}\right.
\end{equation}
and obtained that the only positive solution of \eqref{1.4} is constant. For the special case $f(u)=u^q$, the inequality \eqref{1.4} reduces to
 \begin{equation}\label{1.7}
-\Delta u\geq u^q|\nabla u|^p\ \ \ \mathrm{in}\ \mathbb{R}^N.
\end{equation}
Using $f(u)=u^q$ to verify conditions which $f$ satisfies in \eqref{1.5}, one can deduce the range (subcritical range of exponents)
$$q(N-2)+p(N-1)<N.$$
Indeed, when $f(u)=u^q$, the Liouville-type theorem of \cite[Theorem 2.1]{BGV} corresponds to \cite[Corollary 3]{BGQ}. Meanwhile, for \eqref{1.7} with
$$q(N-2)+p(N-1)\leq N,\ \ p+q>1,$$
any positive solution of \eqref{1.7} must be constant, see \cite[Theorem 7.1]{CM}. However, for \eqref{1.7} with
$$q(N-2)+p(N-1)>N,$$
there exists a positive, nonconstant and bounded classical solution of   the form \mbox{$C(1+|x|^2)^{-\beta}$} for suitable $\beta, C>0$. For degenerate elliptic inequalities, we refer the readers to the survey works \cite{MP,F1}. Mitidieri and Pokhozhaev \cite[Theorem 15.1]{MP} considered the problem
\begin{equation}\label{MP}
\left\{\begin{array}{ll}
-\Delta_mu\geq f(u,\nabla u),&x\in\mathbb{R}^N,\\
u\geq0, &x\in\mathbb{R}^N,
\end{array}\right.
\end{equation}
and they showed that this problem admits no other solution except constants under the assumptions
\begin{equation*}
\left\{\begin{array}{ll}
f\ \mathrm{is\ a\ Carath\acute{e}odory\ function\ such\ that}\ f(t,s)\geq K_0t^q|s|^p,\\
\forall\ (t,s)\in\mathbb{R}_+\times\mathbb{R}^N, \mathrm{where}\ 1<m<N,\ q>0,\ p\geq0,\ p+q>m-1,\ K_0>0,\\
q(N-m)+p(N-1)<N(m-1)\ (\mathrm{subcritical\ range\ of\ exponents}).
\end{array}\right.
\end{equation*}
Indeed, \cite[Theorem 15.1]{MP} corresponds to \cite[Corollary 1]{F1}.

Secondly, we state some Liouville-type results of solutions. Serrin \cite{S} considered that if $u$ is an entire solution of the equation
$$-\Delta u=f(u,\nabla u)\ \ \ \mathrm{in}\ \mathbb{R}^N$$
under the assumptions: $\frac{\partial f}{\partial u}\leq0$, both $u$ and $\nabla u$ are bounded, then $u$ must be constant. Furthermore, it is established in \cite{PS} that $\nabla u$ is necessarily bounded in $\mathbb{R}^N$. From this fact, Caffarelli {\em et.\ al.} \cite{CGS} got a standard Liouville-type result for bounded solutions. Bidaut-V$\mathrm{\acute{e}}$ron {\em et.\ al.} \cite[Corollary B-1]{BGV} studied the equation
\begin{equation}\label{1.6}
-\Delta u=u^q|\nabla u|^p\ \ \ \mathrm{in}\ \mathbb{R}^N,
\end{equation}
where $N\geq2$, $0\leq p<2$ and $q\geq0$ satisfy some suitable restrictions. By using a direct Bernstein method, they obtained that any positive solution is a constant. Filippucci {\em et.\ al.} \cite[Theorem 1.1]{FPS1} also considered \eqref{1.6} for any $p>2$ and $q>0$ (or $q>1-p$). They asserted that any positive bounded solution is constant. The proof was based on a local Bernstein argument and monotonicity properties for the spherical averages of sub-harmonic and super-harmonic functions. Note that the result holds without boundedness assumption in the radial case, see \cite[Theorem C]{BGV}. Meanwhile, some extensions to elliptic systems were   given in \cite{FPS1}. Recently, Bidaut-V$\mathrm{\acute{e}}$ron \cite{B} proved that Liouville-type results are also true for \eqref{1.6} ($p>2$) and \eqref{1.1} ($p>m$) without the assumption of boundedness on the solutions. The proof was based on Bernstein estimates and an Osserman's type estimate for the inequality satisfied by the gradient. Motivated by \cite{BGV}, the present paper extends the Liouville-type theorem to degenerate elliptic equation \eqref{1.1} under the assumption $p<m$.

Our main results are stated as follows.

\begin{thm}\label{thm:1.2}
Let $N\geq2$, $m>1$, $0\leq p<m$ and $q\geq0$ such that $p+q-m+1>0$. If $u$ is a positive weak solution of \eqref{1.1} in $\mathbb{R}^N$ and one of the following assumptions is fulfilled,

$\mathrm{(i)}$ $q\geq1$ and $p+q-m+1<\frac{4(m-1)}{N}$,

$\mathrm{(ii)}$ $0\leq q<1$ and $p+q-m+1<\frac{(m-1)(q+1)^2}{qN}$.

\noindent Then $u$ is constant.
\end{thm}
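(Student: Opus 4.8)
The plan is to use the direct Bernstein method adapted to the $m$-Laplacian, following the strategy of Bidaut-Véron et al.\ for the case $m=2$. Since $0\le p<m$, the equation \eqref{1.1} is degenerate but the gradient term is subcritical in $|\nabla u|$, so the key unknown to control is the function $z=|\nabla u|^2$ (or a suitable power of it weighted by a power of $u$). I would first establish the regularity needed to justify the computations: by standard $C^{1,\alpha}_{\mathrm{loc}}$ elliptic regularity for quasilinear equations, $u\in C^{1,\alpha}_{\mathrm{loc}}$, and away from the critical set $\{\nabla u=0\}$ the equation is uniformly elliptic, so $u$ is smooth there. I would then introduce the auxiliary function $w=u^{-s}$ or work directly with $v=\log u$ to homogenize the $u^q$ factor, reducing the problem to an equation of the form $-\Delta_m v = $ (terms polynomial in $\nabla v$), and set $z=|\nabla v|^2$.

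The core step is to derive a differential inequality for $z$ of Bernstein type. Differentiating the equation, using the Bochner-type formula for the $m$-Laplacian
$$
\tfrac12 \Delta_m(|\nabla v|^2) \ge |\nabla v|^{m-2}\big(|D^2 v|^2 + \nabla v\cdot\nabla(\Delta_m v)/|\nabla v|^{m-2}\big) + (\text{lower order}),
$$
together with the refined Cauchy--Schwarz inequality $|D^2 v|^2 \ge \frac{1}{N}(\Delta v)^2$ and the algebraic relation between $\Delta v$ and $\Delta_m v$, one obtains an inequality of the form $L z \ge c\, z^{1+\delta} - (\text{lower-order in } z)$ on the non-critical set, where $L$ is a second-order operator in divergence form with measurable coefficients and $\delta>0$ depends on $p,q,m$. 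The subcriticality hypotheses (i) and (ii) are precisely what is needed to make the "good" quadratic/superquadratic term dominate: the constraints $q<\frac{(m-1)(N+3)}{N-1}$ and $p+q-m+1<\frac{4(m-1)}{N-1}$ (resp.\ the $q<1$ analogue) should emerge from requiring that a quadratic form in the parameters arising from combining the Bochner term, the $N^{-1}$-refined Cauchy--Schwarz term, and the cross terms from $\nabla v\cdot\nabla(\Delta_m v)$ be positive definite. This quadratic-form positivity computation is where I expect the bulk of the technical work and the main obstacle to lie — keeping track of all the parameters (the exponent $s$ in the substitution, the power of $z$ one tests against, the weight $u^\gamma$) so that the admissible region matches exactly the stated ranges, and handling the degeneracy of the weight $|\nabla v|^{m-2}$ uniformly.

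Once the Bernstein differential inequality $L z \ge c z^{1+\delta}$ is in hand, I would apply a Keller--Osserman / local gradient estimate: testing against a cutoff $\varphi^k$ supported in $B_{2R}\setminus B_R$ (or using a De Giorgi--Nash--Moser iteration on the degenerate operator $L$), one derives a bound $\sup_{B_R} z \le C R^{-\mu}$ with $\mu>0$ independent of $R$. Letting $R\to\infty$ forces $z\equiv 0$ on $\mathbb{R}^N$, hence $\nabla u\equiv 0$ and $u$ is constant. A technical point to address carefully is that the operator $L$ degenerates on $\{\nabla u=0\}$; I would handle this either by a perturbation argument (replacing $|\nabla u|^2$ by $|\nabla u|^2+\varepsilon$ and passing to the limit) or by noting that on the critical set $z=0$ so the inequality is trivially verified there in the viscosity/distributional sense. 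I would also isolate the one-dimensional and radial subtleties ($N\ge 2$ is assumed, so $N=1$ is excluded here and covered by Theorem \ref{thm:1.3}'s method only when $p=m$), and verify that the cutoff integrals converge thanks to the assumed sign and size of $\delta$.
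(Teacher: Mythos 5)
Your overall strategy is the same as the paper's: change unknown by a power substitution, run a Bernstein computation on $z=|\nabla v|^2$ using the refinement $|D^2v|^2\ge\frac1N(\Delta v)^2$ and Young's inequality, obtain an Osserman-type differential inequality whose local estimate gives a gradient bound decaying like a negative power of $R$, and let $R\to\infty$. However, as it stands the proposal stops exactly where the proof of Theorem \ref{thm:1.2} actually lives. The hypotheses (i)–(ii) do not simply ``emerge'' from a positivity requirement that you leave unverified: in the paper one needs \emph{two} free parameters, $u=v^{-\beta}$ and then $z=v^{-\lambda}Y$, so that the zero-order terms organize into a trinomial $T_{\epsilon_0}(t)=(A_6-\epsilon_0)t^2+A_2t+(A_1-\epsilon_0)$ in $t=v^{s+1-\lambda\frac{p-m}{2}}Y^{\frac{p-m}{2}}$, and one must show its discriminant can be made negative by choosing the reduced parameters $\overline{s}$ and $l$ subject to the extra constraint $\overline{s}>\max\{0,m-p-1\}$ (needed both to make the Osserman exponent supercritical and to make the final exponent $\alpha=-\frac{\lambda+2}{2\beta}$ positive). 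The three-case discussion on the sign of $\frac{(N-1)Q}{4}-m+1$, $Q=p+q-m+1$, is precisely what produces the ranges in (i) and (ii) (Step 3 of Lemma \ref{lem3.2}); without carrying out this computation you have not proved the theorem with its stated hypotheses, only described a framework in which some Liouville theorem might hold.

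A second, more technical point: the ``Bochner-type formula for $\Delta_m(|\nabla v|^2)$'' you write is not a correct identity as displayed, and trying to work with $\Delta_m$ of the gradient is exactly the difficulty the paper avoids. Since the case $\nabla u\equiv0$ is trivial, the paper divides the transformed equation by $|\nabla v|^{m-2}$ on the noncritical set and applies the classical Bochner identity for $\Delta$, absorbing the remaining second-order contribution into the nondivergence operator $\mathcal{A}Y=-\Delta Y-(m-2)\langle D^2Y\nabla v,\nabla v\rangle/|\nabla v|^2$, which is uniformly elliptic where $|\nabla v|>0$ and to which the Osserman-type estimate of Lemma \ref{lem3.1} applies; your cutoff/Keller--Osserman alternative is viable, but it must be run for this specific operator rather than for a putative $m$-Laplacian Bochner inequality.
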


\begin{rem}
$\mathrm{(i)}$ For the case $p=m$, Bidaut-V$\mathrm{\acute{e}}$ron \cite[Theorem 2.1]{B} (also see \cite[Subsection 3.1]{HB}) found a change of variable
$$v(x)=\int_0^{u(x)}\exp\Big\{\frac{1}{(q+1)(m-1)}s^{q+1}\Big\}\mathrm{ds}$$
to deduce that $\Delta_mv=0$, and then use the classical Liouville-type theorem (see \cite[Theorem II(i)]{SZ}) to obtain that any positive weak solution of \eqref{1.1} is constant.

$\mathrm{(ii)}$ The Liouville-type result theorem \ref{thm:1.2} would not hold without restrictions.
In fact, some restrictions are needed on the parameters $N,m,p,q$ (i.e., $p+q-m+1>0$ and either $\mathrm{(i)}$ or $\mathrm{(ii)}$). These conditions are needed from a computation of a discriminant of quadratic function, see Step 3 of Lemma \ref{lem3.2} for more details.

$\mathrm{(iii)}$  The appearance of the term $|\nabla u|^p$ of \eqref{1.1} will lead to positive constant solutions, which makes it difficult to apply this theorem to prove the existence of positive solutions.

$\mathrm{(iv)}$ Bidaut-V$\mathrm{\acute{e}}$ron \cite[Theorem B]{BGV} made the transformations: $u=v^{-\gamma}$, $z=|\nabla v|^2$, $z=v^{-\lambda}Y$, and then applied Osserman's type estimate (Lemma \ref{lem3.1}) on $Y$. We notice that
\[
Y=v^{\lambda}z =v^{\lambda}|\nabla v|^2
=\Big(\frac{\lambda}{2}+1\Big)^{-2}\left|\nabla v^{(\frac{\lambda}{2}+1)}\right|^2
=\Big(\frac{\lambda}{2}+1\Big)^{-2}\left|\nabla u^{-\frac1{\gamma}(\frac{\lambda}{2}+1)}\right|^2.
\]
Thus we shall take $\Big(-\frac1{\gamma}(\frac{\lambda}{2}+1)\Big)^{-1}$ as a new parameter $\beta$, and make the transformations: $u=v^{\beta}$, $z=|\nabla v|^2$. Compared with \cite[Theorem B]{BGV}, the calculation can be simplified by applying Osserman's type estimate on $z$.
\end{rem}

The following result is a Liouville-type theorem for supersolutions of \eqref{1.1}.
\begin{thm}\label{thm:1.4}
Assume $N\geq2$, $1<m<N$, $p\geq0$, $q\geq0$ and
\begin{equation}\label{s0}
q(N-m)+p(N-1)<N(m-1).
\end{equation}
Let $u$ be a positive solution of the inequality
\begin{equation}\label{s}
-\Delta_mu\geq u^q|\nabla u|^p\ \ \ \mathrm{in}\ \mathbb{R}^N.
\end{equation}
Then $u$ is a constant.
\end{thm}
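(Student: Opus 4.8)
The plan is to prove Theorem \ref{thm:1.4} by the classical Mitidieri--Pokhozhaev rescaled test function method, adapted to the $m$-Laplacian with a gradient nonlinearity. Since $u$ is a positive weak supersolution of $-\Delta_m u \geq u^q|\nabla u|^p$ in $\mathbb{R}^N$, the first step is to test the inequality against a cutoff of the form $\varphi = u^{-s}\xi_R^\ell$ with $s>0$ small, where $\xi_R$ is a standard cutoff, $\xi_R \equiv 1$ on $B_R$, supported in $B_{2R}$, with $|\nabla \xi_R| \leq C/R$, and $\ell$ large enough to absorb derivatives. Using $\varphi$ as test function in the weak formulation $\int |\nabla u|^{m-2}\nabla u \cdot \nabla \varphi \geq \int u^q|\nabla u|^p \varphi$ and expanding $\nabla \varphi$, the term coming from $\nabla(u^{-s})$ produces a favorable (negative coefficient) term $-s\int u^{-s-1}|\nabla u|^m \xi_R^\ell$ on the left, which combines with the right-hand side $\int u^{q-s}|\nabla u|^p \xi_R^\ell$, while the term from $\nabla(\xi_R^\ell)$ is the error to be controlled.

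The second step is to estimate the error term $\ell\int u^{-s}|\nabla u|^{m-2}\nabla u\cdot\nabla\xi_R \,\xi_R^{\ell-1}$ by Young's inequality, splitting it so as to be dominated by a small multiple of the good gradient integral $\int u^{-s-1}|\nabla u|^m\xi_R^\ell$ plus a remainder of the form $C\int u^{m-1-s}|\nabla\xi_R|^m \xi_R^{\ell-m}$. After absorbing, one is left with an inequality roughly of the shape
\begin{equation*}
\int_{B_R} u^{q-s}|\nabla u|^p + c\int_{B_R} u^{-s-1}|\nabla u|^m \leq C\int_{B_{2R}} u^{m-1-s} |\nabla\xi_R|^m \xi_R^{\ell-m}.
\end{equation*}
The key trick, which is where the subcritical condition \eqref{s0} enters, is to bound the right-hand side: write $u^{m-1-s} = u^{a}\cdot u^{b}$ and interpolate against the two good terms on the left via Hölder's inequality with carefully chosen exponents, converting the RHS into $\epsilon(\text{good terms}) + C R^{N - \kappa}$ for some $\kappa>0$. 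The exponent arithmetic — choosing $s$, and the Hölder exponents so that the powers of $u$ and $|\nabla u|$ match up exactly, and so that the resulting power of $R$ is strictly negative — is precisely equivalent to \eqref{s0}: $q(N-m)+p(N-1)<N(m-1)$. Once $\kappa>0$ is secured, letting $R\to\infty$ forces $\int_{\mathbb{R}^N} u^{-s-1}|\nabla u|^m = 0$, hence $\nabla u\equiv 0$ and $u$ is constant.

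The main obstacle I anticipate is the bookkeeping in the interpolation step: one must simultaneously control two integrals with different powers of $|\nabla u|$ (namely $|\nabla u|^p$ from the reaction term and $|\nabla u|^m$ from the diffusion), and the Hölder splitting of the cutoff integral $\int u^{m-1-s}|\nabla\xi_R|^m$ has to feed back into \emph{both} without introducing positive powers of $R$. When $p=0$ this reduces to the standard Lane--Emden-type computation, but for $p>0$ one genuinely needs to use both good terms, and verifying that the feasible range of the auxiliary parameter $s>0$ is nonempty is exactly the content of the strict inequality \eqref{s0}. A secondary technical point is justifying that $\varphi=u^{-s}\xi_R^\ell$ is an admissible test function for merely weak solutions (local boundedness / Harnack-type bounds from the earlier part of the paper, or a standard truncation $u\mapsto\min(u,k)$ followed by $k\to\infty$, handles this). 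Finally, the hypotheses $1<m<N$ and $p,q\geq 0$ are used to ensure the exponents are in the admissible ranges and the cutoff integral $\int_{B_{2R}}|\nabla\xi_R|^m \sim R^{N-m}$ is the correct scaling.
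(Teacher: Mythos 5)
Your reduction up to the inequality
\begin{equation*}
\int_{B_R}u^{q-s}|\nabla u|^p\,\xi_R^{\ell}+c\int_{B_R}u^{-s-1}|\nabla u|^m\,\xi_R^{\ell}\ \leq\ C\int_{B_{2R}}u^{m-1-s}|\nabla\xi_R|^m\,\xi_R^{\ell-m}
\end{equation*}
is fine, but the step you call the key trick breaks down whenever $p>0$. The right-hand side contains no factor of $|\nabla u|$, while \emph{both} good terms on the left carry strictly positive powers of $|\nabla u|$ (namely $p$ and $m$). Any H\"older/Young splitting of $u^{m-1-s}|\nabla\xi_R|^m$ as a product of fractional powers of $u^{q-s}|\nabla u|^p\xi_R^{\ell}$ and $u^{-s-1}|\nabla u|^m\xi_R^{\ell}$ forces the leftover factor to carry a \emph{negative} power of $|\nabla u|$, which is neither controlled nor even locally integrable; so the cutoff term cannot be reabsorbed into the good terms this way, and no remainder $CR^{N-\kappa}$ with $\kappa>0$ emerges. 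For supersolutions of \eqref{s} alone there is also no independent a priori bound on $\int u^{\gamma}$ to interpolate with, so the situation is genuinely different from the Lane--Emden case $p=0$, where your scheme does reduce to the standard computation.

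The test-function strategy can be repaired, but the Young inequality must be applied to the error term $\ell\int u^{-s}|\nabla u|^{m-1}|\nabla\xi_R|\,\xi_R^{\ell-1}$ \emph{before} any absorption, as a three-factor Young inequality splitting $|\nabla u|^{m-1}$ between the two good integrands and leaving a remainder that is a pure power of $|\nabla\xi_R|$: choosing $\tfrac1{\alpha_1}+\tfrac1{\alpha_2}+\tfrac1{\alpha_3}=1$ with $\tfrac m{\alpha_1}+\tfrac p{\alpha_2}=m-1$ and the $u$-powers matched, one gets $\epsilon I_1+\epsilon I_2+CR^{\,N-\alpha_3}$ where $I_1,I_2$ are the two good integrals, and the existence of $s>0$ small and admissible exponents with $\alpha_3>N$ is precisely equivalent to \eqref{s0} (this is essentially the Mitidieri--Pokhozhaev argument cited in the paper; the borderline cases $q=0$ and $p+q\leq m-1$ require separate adjustments). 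The paper itself takes a different and cleaner route: it substitutes $u=v^{b}$ with $(b-1)b^{m-1}>0$, so that \eqref{s} becomes $-b^{m-1}\Delta_m v\geq (m-1)(b-1)b^{m-1}\frac{|\nabla v|^2}{v}+|b|^{p}v^{s}|\nabla v|^{p-m+2}$, combines the two right-hand terms by Young into $-\Delta_m v\geq C|\nabla v|^{\alpha}$ with $\alpha=\frac{2s+p-m+2}{s+1}$, verifies (splitting into cases according to the sign of $p+q-m+1$ and choosing $b$ suitably) that \eqref{s0} permits $1<\alpha<\frac{N(m-1)}{N-1}$, and then a single cutoff estimate yields $\int_{B_R}|\nabla v|^{\alpha}\leq CR^{N-\frac{\alpha}{\alpha-m+1}}\to0$, whence $v$, and so $u$, is constant. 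To salvage your outline you must replace the interpolation step by the three-factor Young argument or by such a change of variables.
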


\begin{rem}
$\mathrm{(i)}$ The result was obtained first under the case $p+q-m+1>0$ by Mitidieri and Pohozaev in \cite{MP}, and also in the case $p+q-m+1\leq0$ by Filippucci in \cite[Corollaries 1.5 and 1.6]{F2} (the only case not considered was $q=0$). A simple proof is presented in this paper extending the one of \cite[Theorem 2.1]{BGV}.

$\mathrm{(ii)}$ We note that the Liouville-type theorem still holds for the more general equation
\begin{equation*}
-\mathrm{div}\mathcal{A}(x,u,\nabla u)\geq\mathcal{B}(x,u,\nabla u)\ \ \ \mathrm{in}\ \mathbb{R}^N,
\end{equation*}
where the operators $\mathcal{A}$ and $\mathcal{B}$ satisfy
\begin{equation*}
\langle\mathcal{A}(x,r,\xi),\xi\rangle\geq|\xi|^m
\end{equation*}
and
\begin{equation*}
\mathcal{B}(x,r,\xi)r\leq cr^q|\xi|^p
\end{equation*}
under suitable restrictions on parameters $N,m,p,q$.
\end{rem}

The case $m=2$ in \re{1.1} was studied thoroughly in \cite{BGV}.
This provided us a significant reference to solve problems in the case $m>1$. Bernstein-type estimates are adopted in this paper, which would result the gradient upper bound in the form of the negative exponents of the distance to the boundary (i.e., $\left[\mathrm{dist}(x,\partial\Omega)\right]^{-\beta}$ with $\beta>0$) in a bounded domain $\Omega$, see Lemma \ref{lem3.2}. Since $\mathrm{dist}(x,\partial\Omega)$ can be chosen arbitrarily large when the solution is defined on the entire $\mathbb{R}^N$, the upper bound becomes zero. So the solution is a constant. For the case $|\nabla u|=0$, the gradient estimate is obviously true. In this paper, we primarily  focuses on the case $|\nabla u|>0$, i.e., nondegenerate case. Due to the fact that
\begin{equation*}
\Delta_mu=|\nabla u|^{m-2}\Delta u+(m-2)|\nabla u|^{m-4}\langle D^2u\nabla u,\nabla u\rangle,
\end{equation*}
we divide the calculation of the term $\Delta_mu$ into two parts, namely, the terms $|\nabla u|^{m-2}\Delta u$ and $(m-2)|\nabla u|^{m-4}\langle D^2u\nabla u,\nabla u\rangle$, respectively. The techniques we used in Bernstein-type estimates are as follows.

First, we adopt the method of \cite{BGV} to deal with the calculation originated from the term $|\nabla u|^{m-2}\Delta u$.
The technical challenge lies in that the appearance of the term $|\nabla u|^{m-2}$ complicates each resulting term of estimates and then causing technical difficulties to the calculation.   It's worth noting that the results attributed to $|\nabla u|^{m-2}\Delta u$ are consistent with the results attributed to $\Delta u$ (i.e., the case $m=2$) in \cite{BGV}. In fact, the term $\Delta_mu$ is divided into two terms because the calculation of the term $|\nabla u|^{m-2}\Delta u$ would directly associated with the formulas of the  Laplacian operator $\Delta$, such as
\begin{equation*}
\Delta(u_1u_2)=u_1\Delta u_2+u_2\Delta u_1+2\nabla u_1\cdot\nabla u_2
\end{equation*}
and
\begin{equation*}
\Delta(|\nabla u|^2)=2\langle\nabla(\Delta u), \nabla u\rangle+2|D^2u|^2.
\end{equation*}
Clearly, the terms $\Delta_m(u_1u_2)$ and $\Delta_m(|\nabla u|^2)$ with $m>1$ are more complex to calculate.

Second, the term $(m-2)|\nabla u|^{m-4}\langle D^2u\nabla u,\nabla u\rangle$ will generate a lot of terms in the calculation process, among which the terms containing the second order derivative will be treated as the operator terms (i.e., \eqref{2.19a} and Remark \ref{rem:2.1}), and the other ones are transformed by using the fundamental inequalities and then estimated, see Step 2 of Lemma \ref{lem3.2} for more details.

We recall that one of the most frequent and fundamental applications of Liouville-type theorems is to get a priori estimates for positive solutions of elliptic equations in bounded domains, where topological methods can be used to obtain existence results. Motivated by this idea, the second primary goal of this paper is to prove the existence of positive weak solutions of the following problem
\begin{equation}\label{P}
\left\{\begin{array}{ll}
-\Delta_mu=f(x,u,\nabla u) &\mathrm{in}\ \Omega,\\
u(x)=0 &\mathrm{on}\ \partial\Omega,
\end{array}\right.
\end{equation}
where $\Omega\subset\mathbb R^N$ is a bounded smooth domain, $1<m<N$ and $f:\bar\Omega\times\mathbb R\times\mathbb R^N\to\mathbb R$ satisfies
\bea
\mm\left\{ \begin{split}
&f(x,u,\eta) = f_1(x,u,\eta) + f_2(x,u)+ f_3(x,\eta), \; \forall(x,u,\eta)\in\bar\Omega\times\mathbb R\times\mathbb R^N,\\
& \left\{
  \begin{split}
 &f_1  \text{ is  continuous on } (\Omega\setminus \mathscript{Z})\times R \times R^N \text{ for a zero measure set $\mathscript{Z}\subset \Omega$,}\\
 & bu^q|\eta|^p \le f_1(x,u,\eta) \leq bc_0u^q|\eta|^p \text{ for } u\ge0, \; \text{ $b$ is either $0$ or $1$},
 \end{split}  \right.\\
&  \left\{\begin{split}
 &f_3  \text{ is  continuous on } (\Omega\setminus \mathscript{Z})\times R^N \text{ for a zero measure set $\mathscript{Z}\subset \Omega$,}\\
 &|f_3(x,\eta)| \le M_2 |\eta|^{\alpha_2},
 \end{split}\right.\\
&\left\{\begin{split}
 &u^{\alpha_1} \le f_2(x,u) \le M_1 u^{\alpha_1} \text{ for } u\ge 0, \\
 &|f_2(x_1,u)-f_2(x_2,u)| \le \omega(|x_1-x_2|) u^{\alpha_1}, \m \omega(0+)=0,\\
 & u\frac{\p f_2(x,u)}{\p u} \le \g_2 f_2(x,u) \m\text{for some } \gamma_2\in\left(0,\frac{N(m-1)+m}{N-m}\right), \\
 & u^2 \left|\frac{\p^2 f_2(x,u)}{\p u^2} \right| \le C f_2(x,u),
 \end{split}\right.\\
&\text{where}\ c_0, \; M_1\geq1, \; M_2\geq 0, \; 1<m<N, \;
q\ge 0, \; m-1<p<m,\\
&\frac{p+mq}{m-p}<\alpha_1<\frac{N(m-1)}{N-m}, \; m-1<\alpha_2<\frac{m\alpha_1}{\alpha_1+1},\\
& q(N-m)+p(N-1)<N(m-1).
\end{split} \right. \label{F}
\eea
Notice that we have assumed a structure condition but {\em does not require $f$ to be non-negative.} Continuous differentiability is assumed only on $f_2$; the functions $f_1$ and $f_3$ are not even assumed to be continuous, but our assumptions ensure that $f$ is a Carath\'eodory function. Cases such as
\be \left\{ \begin{split} & f(x, u, \eta) =  b_1(x,u,\eta) u^q |\eta|^p + b_2(x) u^{\alpha_1}
 + b_3(x,\eta)  |\eta|^{\alpha_2}, \\
 & 1\le b_1(x,u,\eta), b_2(x) \le M, \m |b_3(x,\eta)|\le M, \m m, p, q, \alpha_1, \alpha_2 \text{ are as above},\\
 & \text{$b_2(x)$ is uniformly continuous on $\bar\Omega$},
 \end{split}\right.
 \ee are included.

Due to appearance of the term $|\nabla u|$, problem \eqref{P} does not possess a variational structure. Therefore, in the present paper  the topological methods will be used to prove the existence of positive weak solutions.

During the past several decades, an abundance of works regrading existence results of \eqref{P} have emerged in many literatures. For the Laplacian case, we refer to \cite{BT,DLN,DGR,DY,F,GP,GR,WD}. De Figueiredo {\em et.\ al.} \cite{DLN} studied the case without gradient terms: $f(x,u,\nabla u)=f(u)$, where $f$ grows superlinearly but less than critical growth, i.e.,
$$\lim_{u\to\infty}\frac{f(u)}{u^{\beta}}=0,\ \ \ \beta=\frac{N+2}{N-2}.$$
Brezis and Turner \cite{BT} considered the case that $f$ satisfies stronger conditions than \eqref{F}, that is,
\begin{equation}\label{11111111}
\liminf_{u\to\infty}\frac{f(x,u,\eta)}{u}>\lambda_1,\ \lim_{u\to\infty}\frac{f(x,u,\eta)}{u^{\alpha}}=0,\ \limsup_{u\to0}\frac{f(x,u,\eta)}{u}<\lambda_1
\end{equation}
with $\alpha=\frac{N+1}{N-1}$, and the condition \eqref{11111111} holds uniformly for $x\in\overline{\Omega}$, $\eta\in\mathbb{R}^N$. Ghergu and R$\breve{\mathrm{a}}$dulescu \cite{GR} focused on the case
$$f(x,u,\nabla u)=\lambda f(x,u)-K(x)g(u)+|\nabla u|^{\alpha},$$
where $\lambda>0$, $0<\alpha\leq2$, $K\in C^{0,\gamma}(\overline{\Omega})$ with $\gamma\in(0,1)$, $g$ is of a singular nonlinearility, $f$ is smooth and grows sublinearly. A similar model is also considered in \cite{DGR}. In addition, \cite{GP} is the first work dealing with Neumann problems of \eqref{P}. The existence is also studied for the cases of uniformly elliptic operators \cite{WD} and systems of equations \cite{DY,F}.

For the $m$-Laplacian case, Cl$\mathrm{\grave{e}}$ment {\em et.\ al.} \cite{CMM} considered positive radial solutions for a system of the $m$-Laplacian case without the gradient terms. Azizieh and Cl$\acute{\mathrm{e}}$ment \cite{AC} studied the case $f(x,u,\nabla u)=f(u)$, and under the assumptions: $1<m\leq2$, $\Omega$ is convex, $f$ is continuous and $f(0)=0$. Motreanu and Tanaka \cite{MT} considered \eqref{P} and developed an approach based on approximate solutions and a new strong maximum principle. Faraci {\em et.\ al.} \cite{FMP} assumed that $f$ satifies a $(m-1)$-sublinear growth in $u$ and $\nabla u$. The proof was based on subsolution and supersolution techniques, Schaefer's fixed point theorem, regularity results and strong maximum principle.  Zou \cite[Theorem 1.3]{Z} required $f(x,u,\nabla u)$ to satisfy a growth-limit condition, a positivity condition, and to be "super-linear" at the origin.
The result is obtained by combining a fixed point theorem with a priori estimates. Ruiz \cite{R} focused on the case
\begin{equation}\label{0.444}
u^{\delta}-M|\eta|^{p}\leq f(x,u,\eta)\leq c_0u^{\delta}+M|\eta|^{p},
\end{equation}
where $c_0\geq 1$, $M>0$, $p\in\left(m-1,\frac{m\delta}{\delta+1}\right)$ and $\delta\in\left(m-1,m_*-1\right)$ with $m_*=\frac{m(N-1)}{N-m}$. Lorca and Ubilla \cite{LU} generalized \eqref{0.444} to the case
\begin{equation}\label{0.333}
u^{\gamma}-M|\eta|^{p}\leq f(x,u,\eta)\leq c_0u^{\delta}+M|\eta|^{p}
\end{equation}
with $m-1<\gamma\leq\delta<m_*-1$. Recently, in \cite{FL1} and \cite{FL2}, Filippucci and Lini considered the case that $f$ involves an explicit dependence on the solution $u$ in the gradient terms, i.e.,
\begin{equation}\label{0.999}
u^{\delta}-Mu^q|\eta|^{p}\leq f(x,u,\eta)\leq c_0u^{\delta}+Mu^q|\eta|^{p}
\end{equation}
and
$$\max\left\{0,u^{\gamma}-Mu^q|\eta|^{p}\right\}\leq f(x,u,\eta)\leq c_0u^{\delta}+Mu^q|\eta|^{p}$$
under suitable assumptions on the parameters $\delta,p,q,\gamma,c_0,M$, respectively.
In contrast, our result does not require $f$ to be non-negative.

In order to prove the existence of positive weak solutions for \eqref{P}, the authors of \cite{FL2,LU} applied the Rabinowitz type theorem due to \cite{AC}.
It's worth noting  that the results were proved without the help from a Liouville-type theorem for the limit problems, which were obtained via a blow-up argument; for the semilinear case see Gidas and Spruck in \cite{GS,GS1}.
Indeed, in \cite{GS1}, the study of a priori bounds for positive weak solutions of the equation
\begin{equation*}
\left\{\begin{array}{ll}
-\Delta u=f(x,u)&\mathrm{in}\ \Omega,\\
u=0 &\mathrm{on}\ \partial{\Omega}
\end{array}\right.
\end{equation*}
is turned into the study of Liouville-type theorems for the two equations
\begin{equation}\label{0.15}
-\Delta u=u^q\ \ \ \mathrm{in}\ \mathbb{R}^N
\end{equation}
and
\begin{equation}\label{0.16}
\left\{\begin{array}{ll}
-\Delta u=u^q &\mathrm{in}\ \mathbb{R}_+^N,\\
u(x)=0 &\mathrm{on}\ \partial\mathbb{R}_+^N,
\end{array}\right.
\end{equation}
where $q$ is related to a growth condition on $f(x,u)$ with respect to $u$. Gidas and Spruck made the blow-up procedure around the points $x_n$ in which solutions $u_n$ attain their maxima. Assuming $\lim_{n\to\infty}x_n=x_0$, they provided a solution on the entire $\mathbb{R}^N$ if $x_0\in\Omega$ (corresponding to \eqref{0.15}) and a solution in the half-space $\mathbb{R}_+^N$ if $x_0\in\partial\Omega$ (corresponding to \eqref{0.16}). Some scholars intended to extend above considerations to the $m$-Laplacian case. Due to the absence of a Liouville-type theorem in the half-space at that time, they made some restrictions on the domain, see \cite{AC,CMM}. 
Whereafter, Zou \cite{Z} provided a Liouville-type theorem for the $m$-Laplace equation with no gradient terms in the half-space. By using this advantageous tool, Baldelli and Filippucci \cite{BF} recently considered \eqref{P} with
$$f(x,u,\nabla u)\leq u^q+C(|u|^s+|\nabla u|^{\theta})$$
under restrictions on parameters $m,q,s,\theta$. This approach can also be applied in nonlocal elliptic problem, see \cite{BPGQ}. In addition, some qualitative properties of semilinear and quasilinear parabolic problems with nonlinear gradient terms were also studied in \cite{AS,FPS,LS,LZZ,LZZ1,JSS,PS2,SZ1,ZL,ZL1,ZL2,ZZ} and the references therein.

In the present paper,  Theorem \ref{thm:1.4} will be applied to study the existence results of the equation \eqref{P}, the nonlinearity $f$ of which involves the positive term $u^q|\nabla u|^p$ due to \eqref{F}. As mentioned earilier, the application of Harnack inequalities resulted some restrictions on the parameters, namely, \eqref{F}.

Our existence theorem is as follows.
\begin{thm}\label{thm:E}
Assume $1<m<N$. Let $f\in C(\Omega\times\mathbb R\times\mathbb R^N)$ be a nonnegative function verifying \eqref{F}. The problem \eqref{P} admits at least one positive weak solution.
\end{thm}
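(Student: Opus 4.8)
The plan is to prove Theorem \ref{thm:E} by the standard Krasnoselskii/Leray--Schauder degree argument in the spirit of \cite{R,FL1}, reducing the problem to an a priori bound via blow-up and the Liouville-type Theorem \ref{thm:1.4}. First I would set up the fixed-point framework: for $t\in[0,1]$ consider the family of problems $\Delta_m u + t f(x,u,\nabla u)=0$ in $\Omega$, $u=0$ on $\partial\Omega$, and define the solution operator $S_t$ on the cone of nonnegative functions in $C^1_0(\overline\Omega)$ (or in $W^{1,m}_0(\Omega)$) by solving the $m$-Laplacian with right-hand side $tf(x,u,\nabla u)$; by the regularity theory for the $m$-Laplacian (Tolksdorf, DiBenedetto) and the compactness of the embedding $C^{1,\alpha}\hookrightarrow C^1$, $S_t$ is completely continuous and depends continuously on $t$. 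The degree of $I-S_0$ on a suitable ball is $1$ (the zero operator), while for $t=1$ the degree on a small ball around $0$ is $0$ provided $f$ forces solutions away from $0$; the homotopy invariance of the degree then yields a fixed point in the annular region, i.e. a positive weak solution of \eqref{P}, as long as all fixed points of $S_t$ for $t\in[0,1]$ stay in a fixed bounded set.

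The heart of the proof is therefore the uniform a priori estimate: there is $C>0$, independent of $t\in[0,1]$, such that every positive weak solution $u$ of $\Delta_m u + tf(x,u,\nabla u)=0$ with $u=0$ on $\partial\Omega$ satisfies $\|u\|_{L^\infty(\Omega)}\le C$. I would argue by contradiction: suppose $u_k$ are solutions (for parameters $t_k$) with $M_k:=\|u_k\|_\infty\to\infty$, attained at $x_k\in\Omega$. Following the Ruiz--Filippucci modification, instead of rescaling at $x_k$ I would fix a reference point $y_0\in\Omega$ and use the Harnack inequality to control $u_k$ near $y_0$ from below by a multiple of $M_k^{?}$ — this is exactly where the restrictions in \eqref{F} (the exponents $\alpha_1=\frac{p+mq}{m-p}$, the range of $\alpha_2$, and $m-(p-m+1)N<0$) enter: they guarantee that the scaling exponent is the natural one $\lambda_k = M_k^{-(p+q-m+1)/m}$ (with the correct homogeneity balancing $u^q|\nabla u|^p$ against $\Delta_m u$), that the lower-order perturbations $M_1 u^{\alpha_1}$ and $M_2|\nabla u|^{\alpha_2}$ vanish in the limit, and that the rescaled functions $v_k(y):=\frac{1}{M_k}u_k(x_k+\lambda_k y)$ have uniformly bounded $C^{1,\alpha}_{loc}$ norms. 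Passing to a subsequence, $v_k\to v$ in $C^1_{loc}$, where $v\ge0$, $v(0)=\|v\|_\infty=1$ (or a suitable normalization via Harnack so $v\not\equiv0$), and $v$ solves $-\Delta_m v = v^q|\nabla v|^p$ on all of $\mathbb R^N$ — because the Ruiz-type centering at $y_0$ keeps the blow-up point in the interior, the half-space case never arises. By Theorem \ref{thm:1.4} (whose hypothesis $q(N-m)+p(N-1)<N(m-1)$ is precisely the last line of \eqref{F}), $v$ is constant, contradicting $\|\nabla v\|$ or the normalization $v(0)=1$ together with $v=0$ somewhere, which produces the required contradiction.

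The new Harnack inequalities advertised in the abstract are the technical engine making the contradiction argument close: after rescaling, one needs a Harnack-type bound for nonnegative solutions of $m$-Laplacian inequalities with the mixed gradient right-hand side in order to (a) prevent the limit $v$ from being identically zero and (b) transfer an $L^\infty$ bound on a ball to an $L^\infty$ bound on $\Omega$ using $u=0$ on $\partial\Omega$ and a covering/connectedness argument. I would prove these by a Moser iteration or by invoking the weak Harnack inequality of Serrin/Trudinger for quasilinear operators, applied to $u$ and to $M_k-u_k$ (or to powers of $u_k$ via the transformation $u=v^b$ used earlier for Theorem \ref{thm:1.4}), being careful that the gradient term $u^q|\nabla u|^p$ has subcritical growth so it can be absorbed — again this is where $m-1<p<m$ and $p+q-m+1>0$ are used. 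The main obstacle, and the place where the most care is needed, is verifying the uniform $C^{1,\alpha}_{loc}$ estimate for the rescaled sequence $v_k$ and the correct scaling balance: one must check that with $\lambda_k=M_k^{-(p+q-m+1)/m}$ the rescaled equation reads $-\Delta_m v_k = v_k^q|\nabla v_k|^p + \text{(terms involving }M_1,M_2\text{ times negative powers of }M_k)$, that those extra terms are bounded in the appropriate norm and tend to zero — this forces exactly the constraints $\alpha_1=\frac{p+mq}{m-p}$ and $m-1<\alpha_2<\frac{\alpha_1 m}{\alpha_1+1}$ in \eqref{F} — and that $\lambda_k\to0$ so that eventually $B(x_k,R\lambda_k)\subset\Omega$ for every fixed $R$; this last point uses $M_k\to\infty$ together with $p+q-m+1>0$.
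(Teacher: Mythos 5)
Your overall strategy (degree argument plus a priori bounds via an interior blow-up and the Liouville Theorem \ref{thm:1.4}) is the right one, but two steps as written would fail. First, the degree bookkeeping: you claim the degree on a small ball around $0$ is $0$ "provided $f$ forces solutions away from $0$", but $u\equiv0$ \emph{is} a fixed point of the solution operator (since $f(x,0,0)=0$ by \eqref{F}), so the absence of nontrivial small solutions does not give degree zero there. The paper's (and Ruiz's) argument is the opposite: the index on the small ball is $1$, proved via the homotopy $u=t\psi(u)$ and the superlinearity estimate, and the nonzero degree difference comes from showing the index on the \emph{large} ball is $0$. That computation requires the additive homotopy $\Delta_m u+f(x,u,\nabla u)+t\lambda_0=0$ together with a nonexistence result for $\lambda\geq\lambda_0$ (Proposition \ref{prop6.1}, proved with Picone's inequality, Lemma \ref{lem111}), plus a priori bounds uniform in $\lambda\in[0,\lambda_0)$ (Proposition \ref{prop6.2}). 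Your multiplicative homotopy $\Delta_m u+tf=0$ cannot produce this index-$0$ computation, and the nonexistence-for-large-$\lambda$ ingredient is entirely missing from your plan.

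Second, the blow-up contradiction does not close as you set it up. Your scaling exponent is off ($\lambda_k=M_k^{-(p+q-m+1)/(m-p)}$, not $M_k^{-(p+q-m+1)/m}$), but more seriously, with $\alpha_1=\frac{p+mq}{m-p}$ the term $M_1u^{\alpha_1}$ is \emph{scale-invariant}: its coefficient after rescaling is exactly $M_1$, so it does not vanish in the limit — and this is essential, not incidental. If, as you claim, all lower-order terms vanish, the limit supersolution satisfies only $-\Delta_m v\geq v^q|\nabla v|^p$ on $\mathbb{R}^N$, and Theorem \ref{thm:1.4} then gives $v$ constant; but $v\equiv1$ is perfectly consistent with the normalization $v(0)=1$, and after re-centering at an interior point $y_0$ there is no point where $v=0$ (the boundary escapes to infinity), so neither of your proposed contradictions is available. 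The paper instead keeps $-\Delta_m z_0\geq z_0^q|\nabla z_0|^p+M_1z_0^{\alpha_1}$, so that "constant" forces $z_0\equiv0$, contradicting $z_0>0$ (strong maximum principle, Theorem \ref{thm:6.1}). Relatedly, you announce the Ruiz-type centering at $y_0$ but actually rescale at $x_k$ with normalization $M_k$; Step 1 only gives $\delta_k/\lambda_k\geq C$, not $\to\infty$, so centered at $x_k$ the limit lives on a fixed ball, not on $\mathbb{R}^N$. One must first prove $u_k(y_0)\to\infty$ at a fixed interior point (in the paper: the boundary-distance estimate of Step 1, the Harnack inequality of Theorem \ref{thm:5.3}, and the weak Harnack inequality of Theorem \ref{thm:5.1}) and then rescale at $y_0$ with $\overline S_k=u_k(y_0)$; this chain is asserted in your sketch but not carried out, and it is exactly where the hypotheses $\alpha_1=\frac{p+mq}{m-p}$, $m-(p-m+1)N<0$ and $q(N-m)+p(N-1)<N(m-1)$ are consumed.
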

\begin{rem}\label{remE}
$\mathrm{(i)}$ The assumptions on $f$ in this paper are consistent with those  in\cite{R}, namely,
\begin{equation}\label{T30}
u^{\delta}-M|\eta|^{\alpha_2}\leq f(x,u,\eta)\leq c_0u^{\delta}+M|\eta|^{\alpha_2}.
\end{equation}
where $\alpha_2\in\left(m-1,\frac{m\delta}{\delta+1}\right)$ and $\delta\in\left(m-1,m_*-1\right)$ with $m_*=\frac{m(N-1)}{N-m}$. The parameter $\delta$ in \cite{R} is equal to the parameter $\alpha_1$ in this paper. Note that $\alpha_1\in\left(\frac{p+mq}{m-p},m_*-1\right)$. Compared with \eqref{T30}, the nonlinearity $f$ in this paper contains one more term of  the prototype $c_3(x)u^q|\eta|^p$, $1\le c_3(x)\le c_0$, which
is not suitable for application of the Young's inequality for further simplification. In fact,
\begin{equation}\label{Z1}
\begin{split}
u^q|\eta|^p&\leq\frac{q(m-p)}{p+mq}u^{\frac{p+mq}{m-p}}+\frac{p(q+1)}{p+mq}|\eta|^{\frac{p+mq}{q+1}}\\
&=\frac{q(m-p)}{p+mq}u^{\alpha_1}+\frac{p(q+1)}{p+mq}|\eta|^{\frac{m\alpha_1}{\alpha_1+1}}.
\end{split}
\end{equation}
The Young's inequality enlarges the powers of $u$ and $|\eta|$ (i.e., $\alpha_1>q$, $\frac{m\alpha_1}{\alpha_1+1}>p$), which makes the ranges of the parameters in \eqref{F} inaccurate. In addition, the assumption $\alpha_2\in\left(m-1,\frac{m\alpha_1}{\alpha_1+1}\right)$ is fundamental in \cite{R}. It can be seen that $\frac{m\alpha_1}{\alpha_1+1}$ in \eqref{Z1} is the border line case and is not covered by results in \cite{R}.

$\mathrm{(ii)}$ Argue as in \cite{FL2,LU}, if $f$ satisfies the following condition
\begin{equation*}
\max\left\{0,u^{\gamma_1}|\eta|^{\gamma_2}+M_1u^{\gamma_3}-M_2|\eta|^{\alpha_2}\right\}\leq f(x,u,\eta)\leq c_0u^q|\eta|^p+M_1u^{\alpha_1}+M_2|\eta|^{\alpha_2}
\end{equation*}
with some restrictions on the parameters $N,m,p,q,\gamma_1,\gamma_2,\gamma_3,\alpha_1,\alpha_2$, Theorem \ref{thm:E} still holds.
\end{rem}

Due to the appearance of the term $u^q|\nabla u|^p$, we adopt the following techniques in proving the existence of positive weak solutions.

First, we state the techniques in the proof of the Harnack inequality of weak solutions to the inequality
\begin{equation}\label{0.777}
bu^q|\nabla u|^p+ u^{\alpha_1}-M_2|\nabla u|^{\alpha_2}\leq-\Delta_mu\leq c_0bu^q|\nabla u|^p+M_1u^{\alpha_1}+M_2|\nabla u|^{\alpha_2}+\lambda,
\end{equation}
where $b=0$ or $1$, see Theorem \ref{thm:5.3}. In the present paper, we apply the classical Harnack inequality in \cite[Lemma 2.2]{R}, which is for the inequality
\begin{equation}\label{0.888}
|\Delta_mu|\leq c(x)|\nabla u|^{m-1}+d(x)u^{m-1}+f(x)
\end{equation}
under additional restrictions on functions $c(x)$, $d(x)$ and $f(x)$.
It is an interesting question  whether the classical results  \eqref{0.777} and \eqref{0.888}  can be applied directly  on the term $u^q|\nabla u|^p$. In fact, since
$$u^q|\nabla u|^p=u^q|\nabla u|^{p-m+1}|\nabla u|^{m-1},$$
the term $u^q|\nabla u|^{p-m+1}$ can be viewed as the coefficient of the term $|\nabla u|^{m-1}$. We can also write
$$u^q|\nabla u|^p=u^{q-m+1}|\nabla u|^{p}u^{m-1},$$
the term $u^{q-m+1}|\nabla u|^{p}$ also can be viewed as the coefficient of the term $u^{m-1}$. Thus, according to \eqref{0.777}, we can find the detailed expressions of $c(x)$, $d(x)$ and $f(x)$ in \eqref{0.888}, and then only need to substitute the conditions satisfied by $c(x)$, $d(x)$, $f(x)$ for verification, see \eqref{5.12}-\eqref{5.14}. In particular, in the calculation of the term $\int_{B_{2R}}\left[u^q|\nabla u|^{p-m+1}\right]^{\sigma'}$ during validation (i.e., \eqref{0.666}), we use the H\"{o}lder inequality to form the integral expressions \eqref{5.2} and \eqref{5.11}. Clearly, the integrand of \eqref{5.2} is a power of $u$, the integrand of \eqref{5.11} is the product of a power of $u$ and a power of $|\nabla u|$, which is related to the nonlinearity in \eqref{1.1}. \eqref{5.11} is obtained via selecting an appropriate test function and then using the fundamental inequalities. Moreover, in the integral estimates, we give more accurate ranges of exponents of $u,|\nabla u|$ in Lemma \ref{lem5.1} than those in \cite[Lemma 2.1]{R}.

Second, we consider more general nonlinearity $f$ in this paper, and does not require $f$ to be non-negative by selecting an appropriate test function, see Lemma \ref{lem111} and Proposition \ref{prop6.1}. In the blow-up process, we make the transformation $w_n=T(u_n)$, and find that $w_n$ satisfy
\[
-\Delta_m w_n(y)=\theta_n(y,w_n,\nabla w_n).
\]
Further estimates on $\theta_n$ show that the limit of $\theta_n$ is independent of $y$. From the regularity and the Harnack inequality given by Theorem \ref{thm:5.3}, we deduce that $w_n\to w$ on any compact subset of $\bar { \mathbb{R}^N_+} \cap \bar \Omega_n$, and $w$ satisfies
\[
- \Delta_m w  =  B(w),  \m y\in \mathbb{R}^N_+,
\]
where
\beaa
&& w^{\alpha_1} \le B(w) \le M_1 w^{\alpha_1} , \mm
 w \frac{\p B(w)}{\p w} \le \gamma_2 B(w).
\eeaa
The Liouville-type theorem in the half-space in \cite[Theorem 1.1]{Z} implies that $w\equiv0$, which is a contradiction with the maximun principle, given by Theorem \ref{thm:6.1}.


The paper is organized as follows. In Section 2, we present Liouville-type theorems \ref{thm:1.2} and \ref{thm:1.4}. In Section 3, we prove Theorem \ref{thm:E}. In Section 4, we point out a typographic error of a weak Harnack inequality frequently
quoted in connection with this type of estimates.

\section{Liouville-type theorems}

\subsection{The case $p<m$}
\paragraph{\ \ \ \ The following result will be useful in the proof of Lemma \ref{lem3.2}.}
The proof is referred to   part of the proof taken from (2.11) to (2.12) of \cite[Proposition 2.1]{BGV1}.
\begin{lem}\label{lem3.1}
Let $\xi>1$ and $R>0$. Assume $v$ is continuous,   $Y$ is continuous and nonnegative on $\overline{B_R}$ and $C^1$ on the set $S=\left\{x\in B_R:Y(x)>0\right\}$, and
$|\nabla v|>0$ on $S$. For some real number $d$, if $Y$ satisfies
\begin{equation*}
-\Delta Y-(m-2)\frac{\langle D^2Y\nabla v, \nabla v\rangle}{|\nabla v|^2}+Y^{\xi}-d\frac{|\nabla Y|^2}{Y}\leq0
\end{equation*}
on each connected component of $S$, then
\begin{equation*}
Y(0)\leq C_{N,\xi,d}R^{-\frac{2}{\xi-1}}.
\end{equation*}
\end{lem}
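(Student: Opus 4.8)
The statement is a localized Bernstein-type / Osserman-type estimate: a nonnegative function $Y$ satisfying a differential inequality of the form
\[
-\Delta Y-(m-2)\frac{\langle D^2Y\nabla v,\nabla v\rangle}{|\nabla v|^2}+Y^{\xi}-d\frac{|\nabla Y|^2}{Y}\le 0
\]
on the positivity set $S$ must obey the pointwise bound $Y(0)\le C R^{-2/(\xi-1)}$. The natural strategy, following \cite[Proposition 2.1]{BGV1}, is the standard nonlinear-capacity / cut-off testing argument adapted to the anisotropic second-order operator $L w := \Delta w + (m-2)|\nabla v|^{-2}\langle D^2 w\,\nabla v,\nabla v\rangle$. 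The plan is: (1) reduce to bounding $Y$ on the set $S$ and exploit that $\sup_{B_{\rho}}Y$ is attained and is finite; (2) introduce a standard radial cut-off $\varphi\in C_c^{\infty}(B_R)$ with $\varphi\equiv 1$ on $B_{R/2}$, $0\le\varphi\le 1$, $|\nabla\varphi|\le C/R$, $|D^2\varphi|\le C/R^2$, raised to a large power $\varphi^{k}$; (3) multiply the inequality by $\varphi^{k}Y^{\mu}$ for suitably chosen exponents $k,\mu$, integrate over $S$, integrate by parts, and absorb the bad terms — the term $d|\nabla Y|^2/Y$ and the cross terms coming from $L$ acting across $\partial S$ — using Cauchy–Schwarz/Young with $\varepsilon$; (4) what survives on the left is a positive multiple of $\int \varphi^{k}Y^{\xi+\mu}$, and on the right one is left with $\int |D^2(\varphi^k)|\,Y^{1+\mu}$-type quantities bounded by $C R^{-2}\int_{B_R}\varphi^{k-2}Y^{1+\mu}$; (5) apply Young's inequality to reabsorb $\int\varphi^{k-2}Y^{1+\mu}$ into a small multiple of $\int\varphi^{k}Y^{\xi+\mu}$ plus $C R^{-2(\xi+\mu)/(\xi-1)}|B_R|\cdot R^{-N}$, leading after rescaling to $\int_{B_{R/2}}Y^{\xi+\mu}\le C R^{N-2(\xi+\mu)/(\xi-1)}$; (6) bootstrap this integral bound to the pointwise bound at the center either by a Moser/De Giorgi iteration for the operator $L$ (which is uniformly elliptic once $|\nabla v|>0$ on the relevant component, since its ellipticity constants depend only on $m$), or more directly by choosing $\mu$ large and iterating over a sequence of shrinking balls $B_{\rho_j}\downarrow\{0\}$.

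The role of the hypotheses is: $|\nabla v|>0$ guarantees $L$ is genuinely a second-order uniformly elliptic operator with ellipticity ratio $\min(1,m-1):\max(1,m-1)$, so the integration-by-parts identities and the iteration are legitimate on each connected component of $S$; continuity of $Y$ and $Y\ge 0$ with $C^1$ regularity on $S$ make the integrals and boundary contributions on $\partial S\cap B_R$ well defined (on $\partial S$ one has $Y=0$, so the boundary terms that arise either vanish or have a favorable sign); the exponent $\xi>1$ is what makes the absorption in step (5) and the decay rate $-2/(\xi-1)$ work, exactly as in the scalar Keller–Osserman estimate for $-\Delta Y + Y^{\xi}\le 0$. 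I would carry out steps (2)–(4) on a fixed connected component $\Omega'$ of $S$, writing the test computation so that every occurrence of $m-2$ is grouped with a Laplacian term and re-expressed through $L$, avoiding a case split into $m>2$, $m=2$, $m<2$.

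The main obstacle I expect is step (4)–(5): controlling the cross terms produced by $(m-2)|\nabla v|^{-2}\langle D^2 Y\,\nabla v,\nabla v\rangle$ after integration by parts. Integrating this term against $\varphi^kY^{\mu}$ generates, besides the wanted $\int\varphi^k|\nabla Y|^2 Y^{\mu-1}$-type positive contribution, terms involving $\nabla(|\nabla v|^{-2}\nabla v\otimes\nabla v)$, i.e. third derivatives of $v$, which are not available. The fix is to \emph{not} integrate by parts that term in $v$ at all, but to treat $L$ as a single divergence-form operator $\mathrm{div}(A(x)\nabla Y)$ with $A = I + (m-2)|\nabla v|^{-2}\nabla v\otimes\nabla v$ measurable, bounded, uniformly elliptic; then $\int_{\Omega'}\varphi^k Y^{\mu}(-\mathrm{div}(A\nabla Y)) = \int_{\Omega'}\langle A\nabla Y,\nabla(\varphi^k Y^{\mu})\rangle$ with no derivatives on $A$, and all subsequent estimates use only $\lambda I\le A\le\Lambda I$. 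This is precisely why the lemma is stated with $Y$ only $C^1$ on $S$: the argument never differentiates the coefficients. Once this is recognized, the remaining work is the routine Young-inequality absorption and a finite iteration, and the constant $C_{N,\xi,\alpha}$ (the paper's notation; here $\alpha$ should presumably read $d$, or $m$) depends only on $N$, $\xi$, $d$ and the ellipticity constants $\min(1,m-1),\max(1,m-1)$ — in particular not on $R$, which yields the Liouville conclusion when $R\to\infty$.
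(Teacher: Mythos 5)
Your proposal hinges on a step that is not available: recasting the operator as a divergence--form operator with merely bounded measurable coefficients. The operator in the lemma is $\mathcal{A}Y=-\mathrm{tr}\!\left(A(x)D^2Y\right)$ with $A=I+(m-2)\frac{\nabla v\otimes\nabla v}{|\nabla v|^2}$, i.e.\ it is in \emph{non-divergence} form. Your ``fix'' identifies it with $-\mathrm{div}(A\nabla Y)$, but
\begin{equation*}
\mathrm{div}(A\nabla Y)=\mathrm{tr}\!\left(A\,D^2Y\right)+\langle \mathrm{div}A,\nabla Y\rangle ,
\end{equation*}
and $\mathrm{div}A$ involves $D^2v$ --- precisely the uncontrolled derivatives of the coefficients you were trying to avoid. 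So the integration-by-parts identity $\int\varphi^kY^{\mu}(-\mathrm{div}(A\nabla Y))=\int\langle A\nabla Y,\nabla(\varphi^kY^{\mu})\rangle$ that drives your steps (3)--(5) does not hold for this operator, and the Moser/De Giorgi iteration in step (6) is likewise a divergence-form tool; for a non-divergence operator with only measurable coefficients one would need Krylov--Safonov/ABP-type machinery, which you do not invoke and which would make the argument far heavier than the lemma warrants. (Also, reading the ``$Y\in C^1$ on $S$'' hypothesis as evidence that the proof never uses second derivatives is off: the differential inequality itself contains $\Delta Y$ and $D^2Y$, so pointwise second-order information on $Y$ is exactly what is being used.)

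The proof the paper points to (\cite[Proposition 2.1]{BGV1}; see also \cite[Lemma 2.2]{BGV} and Remark \ref{rem:2.1}) is of a different and much lighter nature: a pointwise Osserman/Keller-type comparison argument. Since $(a_{ij})$ is uniformly elliptic with constants $\min\{1,m-1\}$ and $\max\{1,m-1\}$ whenever $|\nabla v|>0$, one builds a radial barrier of the form $W(x)=C_{N,\xi,m,d}\,\bigl(R^2-|x|^2\bigr)^{-\frac{2}{\xi-1}}$, chosen so that $-\mathrm{tr}(BD^2W)+W^{\xi}-d\frac{|\nabla W|^2}{W}>0$ for \emph{every} symmetric matrix $B$ with those ellipticity bounds, and shows by evaluating the inequality at an interior maximum point of $Y-W$ (which, if positive, must lie in $S$ because there $Y>W>0$, and where $\nabla Y=\nabla W$, $D^2(Y-W)\le 0$) that $Y\le W$ throughout $B_R$; evaluating at the center gives $Y(0)\le W(0)=CR^{-2/(\xi-1)}$. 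This argument uses only pointwise second derivatives and no regularity whatsoever of the coefficients $a_{ij}$, which is exactly why the non-divergence structure is harmless there, and why your cut-off/testing route, as written, has a genuine gap that the comparison argument avoids.
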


\begin{rem}\label{rem:2.1}
$\mathrm{(i)}$ Denote the operator $\mathcal{A}$ by
\begin{equation*}
\begin{split}
Y\rightarrow\mathcal{A}Y:=&-\Delta Y-(m-2)\frac{\langle D^2Y\nabla v,\nabla v\rangle}{|\nabla v|^2}\\
=&-\sum_{i,j=1}^{N}\left(\delta_{ij}+(m-2)\frac{v_iv_j}{|\nabla v|^2}\right)Y_{ij}=-\sum_{i,j=1}^{N}a_{ij}Y_{ij},
\end{split}
\end{equation*}
where $\delta_{ij}=1$ if $i=j$, $\delta_{ij}=0$ if $i\neq j$, $v_i=\frac{\partial v}{\partial x_i}$, $Y_{ij}=\frac{\partial^2Y}{\partial x_i\partial x_j}$ and $a_{ij}$ depend on $\nabla v$, then
\begin{equation*}
\mathrm{\min}\{1,m-1\}|\xi|^2\leq\sum_{i,j=1}^{N}a_{ij}\xi_{i}\xi_{j}\leq\mathrm{\max}\{1,m-1\}|\xi|^2
\end{equation*}
for all $\xi=(\xi_1,...\xi_N)\in\mathbb R^N$. Consequently, $\mathcal{A}$ is uniformly elliptic if $|\nabla v|>0$.\\
$\mathrm{(ii)}$ The result of \cite[Lemma 3.1]{B} extended the results of \cite[Proposion 2.1]{BGV1} and \cite[Lemma 2.2]{BGV}, and implied an Osserman's type property of inequality. Namely, suppose
\begin{equation*}
\mathcal{A}Y+\alpha(x)Y^{\xi}-\beta(x)-d\frac{|\nabla Y|^2}{Y}\leq0,
\end{equation*}
where $\xi>1$, $d=d(N,p,q)$, $\alpha,\beta$ are continuous in $\Omega$ and $\alpha$ is positive. Then there exists a constant $C=C(N,p,q,\xi)>0$ such that for any ball $\overline{B}(x_0,\rho)\subset\Omega$ it holds
\begin{equation*}
Y(x_0)\leq C\left[\frac{1}{\rho^2}\max_{B_{\rho}(x_0)}\frac{1}{\alpha}\right]^{\frac{1}{\xi-1}}+\left[\max_{B_{\rho}(x_0)}\frac{\beta}{\alpha}\right]^{\frac{1}{\xi}}.
\end{equation*}
The above result plays an vital role in proving a Liouville-type theorem for \eqref{1.1} without the assumption of boundedness on the solution when $p>m$.
\end{rem}

The Liouville-type theorem is based on the following Bernstein-type estimate, which can be referred to the series works \cite{AS,CJZ,FPS,L,LS,PS1,ZH,ZH1}.

\begin{lem}\label{lem3.2}
Under the assumptions of Theorem \ref{thm:1.2}, if $u$ is a weak solution of \eqref{1.1} in $B_R$, there exist positive constants   $\alpha=\alpha(N,m,p,q)$ 
and $C=C(N,m,p,q)$ such that
\begin{equation*}
|\nabla u^{\alpha}(0)|\leq CR^{-1-\alpha\frac{m-p}{p+q-m+1}}.
\end{equation*}
\end{lem}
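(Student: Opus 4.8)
The strategy is a direct Bernstein-type gradient estimate applied to the auxiliary function $Y = |\nabla v|^2$, where $v$ is a suitable power transformation of $u$ chosen to linearize the geometry of the problem. Concretely, I would set $v = u^{s}$ for a parameter $s$ to be fixed later (or equivalently work with $w = \log u$, handling the cases $q<1$ and $q\ge 1$ via the two alternative hypotheses), so that the equation \eqref{1.1} becomes an equation for $v$ in which the reaction $u^q|\nabla u|^p$ transforms into a term of the form $|\nabla v|^{p} v^{(q - (s-1)(p-1))/s}$ up to constants; the exponent of $v$ can then be tuned by the choice of $s$. The key point of introducing $v$ is that on the nondegenerate set $\{|\nabla u|>0\}$ one can compute $\Delta_m v$ using the splitting $\Delta_m v = |\nabla v|^{m-2}\Delta v + (m-2)|\nabla v|^{m-4}\langle D^2 v\,\nabla v,\nabla v\rangle$ exactly as in the proof of Theorem \ref{thm:1.3}, and then feed the resulting identity into the operator $\mathcal{A}$ of Remark \ref{rem:2.1}(i).

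**Main steps.** First, restrict to the set $S=\{x\in B_R : |\nabla u(x)|>0\}$ (the complement is harmless since the gradient estimate is trivial where $\nabla u = 0$, as noted in the introduction). Second, derive a Bochner-type differential inequality for $Y=|\nabla v|^2$: differentiating the equation for $v$ twice, using the standard identities $\Delta(|\nabla v|^2) = 2\langle\nabla\Delta v,\nabla v\rangle + 2|D^2 v|^2$ and the analogous formula adapted to the operator $\mathcal{A}$, and controlling the Hessian term $|D^2 v|^2$ from below via Cauchy–Schwarz (keeping $\tfrac1N(\Delta v)^2$, or a refined version that retains the contribution $|D^2v\,\tfrac{\nabla v}{|\nabla v|}|^2$ in the direction of $\nabla v$). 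Third — and this is where the alternative conditions (i) and (ii) of Theorem \ref{thm:1.2} enter — absorb all the "bad" gradient-dependent lower-order terms produced by differentiating $|\nabla v|^p v^{\gamma}$ into a multiple of $Y^{\xi}$ (with $\xi = 1 + \tfrac{2(m-p)}{\,m\,(\text{something})\,}$, more precisely $\xi$ chosen so that $Y^\xi$ matches the homogeneity of the reaction after the $v$-substitution) plus a multiple of $\tfrac{|\nabla Y|^2}{Y}$; the inequalities on $p,q,N,m$ are exactly the conditions under which the discriminant of the resulting quadratic form in the "unknowns" $(\Delta v,\; \sqrt{Y}\cdot(\text{reaction factor}))$ is negative, so that the cross terms are controlled. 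This yields an inequality of precisely the form required by Lemma \ref{lem3.1} (or the sharper Remark \ref{rem:2.1}(ii)) on each connected component of $S$, namely
\[
-\Delta Y - (m-2)\frac{\langle D^2 Y\,\nabla v,\nabla v\rangle}{|\nabla v|^2} + c_1 Y^{\xi} - d\,\frac{|\nabla Y|^2}{Y} \le 0 .
\]
Fourth, apply Lemma \ref{lem3.1} to conclude $Y(0) = |\nabla v(0)|^2 \le C R^{-2/(\xi-1)}$, and finally translate back: choosing $s = \alpha$ appropriately and unwinding $\xi - 1 = \tfrac{2(m-p)}{p+q-m+1}\cdot\alpha^{-1}$ (after the cosmetic rescaling hidden in the definition of $v$), one obtains $|\nabla u^{\alpha}(0)| \le C R^{-1 - \alpha\frac{m-p}{p+q-m+1}}$.

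**The main obstacle.** The hard part is Step 3: organizing the large number of terms coming from the $(m-2)|\nabla v|^{m-4}\langle D^2 v\,\nabla v,\nabla v\rangle$ part of $\Delta_m v$ so that they combine cleanly, and then showing that the relevant quadratic form is negative definite. As the authors flag in the Remark, the trick is to extract the terms carrying the factor $m-2$, simplify them to something proportional to $(m-2)\,Q\,\ell\,\overline{s}$, and merge that with the $(m-1)\,Q\,\ell\,\overline{s}$ term arising from the $|\nabla v|^{m-2}\Delta v$ part, so that the net contribution is $Q\,\ell\,\overline{s}$ with no residual dependence on the sign of $m-2$; this avoids splitting into the cases $m>2$, $m=2$, $m<2$. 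Once the inequality is in the canonical Osserman form, invoking Lemma \ref{lem3.1} is routine. A secondary technical point is the regularity needed to justify the pointwise computations — one works with $C^{1,\beta}_{loc}$ weak solutions and the computations are valid on $S$ where the equation is uniformly elliptic and solutions are smooth by standard elliptic regularity, so $Y\in C^1(S)$ as required by the hypotheses of Lemma \ref{lem3.1}.
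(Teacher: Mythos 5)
Your architecture is the same as the paper's: pass to a power of $u$, use the Bochner identity with $|D^2v|^2\ge\frac1N(\Delta v)^2$ on the set where $\nabla u\neq0$, absorb the cross terms by Young's inequality, reduce everything to the sign of the discriminant of a quadratic, and conclude via the Osserman-type Lemma \ref{lem3.1}. The genuine gap is in the choice of the auxiliary function, which is precisely where the hypotheses (i)--(ii) of Theorem \ref{thm:1.2} must come out. You take $Y=|\nabla v|^2$ with $v=u^{s}$, a one-parameter family, and apply Cauchy--Schwarz to this same $v$; the paper sets $u=v^{-\beta}$ and then works with the weighted quantity $Y=v^{\lambda}|\nabla v|^{2}$, i.e.\ two free exponents. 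After the reduction in Step 3 the discriminant takes the form $D_2=Q\left(l-\frac{\overline{s}}{2}\right)^2+\left[\frac{(N-1)Q}{4}-m+1\right]\overline{s}^{2}+(m-1)(q-1)\overline{s}+(m-1)q$, with $Q=p+q-m+1$ and $l=\lambda/(\lambda+2)$, and it is the choice of $l$ close to $\overline{s}/2$ (forcing $\lambda<-2$) that kills the square and leaves the $(N-1)$-quadratic $T(\overline{s})$ whose study yields exactly the ranges (i) and (ii). Your setup corresponds to $\lambda=0$, i.e.\ $l=0$, for which $D_2=\left[\frac{NQ}{4}-m+1\right]\overline{s}^{2}+(m-1)(q-1)\overline{s}+(m-1)q$; making this negative for some admissible $\overline{s}>\max\{0,m-p-1\}$ requires $Q<\frac{4(m-1)}{N}$, or $0\le q<1$ together with $Q<\frac{(m-1)(q+1)^{2}}{qN}$ --- strictly smaller than (i)--(ii). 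For example, $q\ge1$ with $\frac{4(m-1)}{N}\le Q<\frac{4(m-1)}{N-1}$ is allowed by Theorem \ref{thm:1.2}(i), but then every coefficient of your quadratic is nonnegative and the constant term is positive, so no admissible $\overline{s}$ exists and the argument stops. Hence your assertion that the discriminant condition ``is exactly'' (i)--(ii) cannot be delivered by the function you chose; the missing idea is the second exponent, namely the weight $v^{\lambda}$ (equivalently, performing the Bochner/Cauchy--Schwarz step on a different power of $u$ than the one measured by $Y$). Your parenthetical fallback --- a refined Cauchy--Schwarz retaining the $\nabla v$-direction of $D^2v$ --- might in principle recover the loss from $N$ to $N-1$, but you do not carry it out, and it is not the paper's route. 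Note also that the trick you quote (merging $(m-2)Ql\overline{s}$ with $(m-1)Ql\overline{s}$) is vacuous in your scheme, since $l=0$ makes those terms disappear; this is another sign that your normalization is not the one that produces the stated conditions.

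Two smaller points. First, with a single power transformation the coefficients of the resulting inequality still carry powers of $v$; you need the paper's device of choosing $\overline{s}$ through the relation \eqref{3.16} so that, after the Young step combining the $Y^2$-term and the $Y^{p-m+2}$-term, the $v$-powers cancel and a pure term $Y^{\xi}$ with $\xi=\frac{2\overline{s}+p-m+2}{\overline{s}+1}>1$ survives, as in \eqref{3.17} --- your proposal gestures at ``matching homogeneity'' but this cancellation is a constraint, not a free choice. Second, the final bookkeeping is off: it is not $\xi-1=\frac{2(m-p)}{p+q-m+1}\alpha^{-1}$ but $\xi-1=\frac{\overline{s}+p-m+1}{\overline{s}+1}$, and with the paper's choices $\frac{2}{\xi-1}=2+\frac{2\alpha(m-p)}{p+q-m+1}$, which after taking the square root of $Y(0)$ is what gives $|\nabla u^{\alpha}(0)|\le CR^{-1-\alpha\frac{m-p}{p+q-m+1}}$.
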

\begin{proof}
We assume  $|\nabla u|\not\equiv 0$, and   consider the region where $|\nabla u|>0$.

\textbf{Step\ 1}. Transformation of the equation \eqref{1.1}.
Set $u=v^{\beta}$, where   $\beta>\max\left(0,\frac{p-m+1}{p+q-m+1}\right)$.
We compute
\begin{eqnarray}\label{2.1}
 && \nabla u  =  \beta v^{\beta-1}\nabla v, \\
\label{2.2}
 && \Delta u =\beta v^{\beta-1}\Delta v+\beta(\beta-1)v^{\beta-2}|\nabla v|^2
\end{eqnarray}
and
\begin{equation}\label{2.3}
D^2u=\beta v^{\beta-1}D^2v+\beta(\beta-1)v^{\beta-2}(\nabla v)^t\nabla v,
\end{equation}
where $(\nabla v)^t$ is the transpose of $\nabla v$. It follows from \eqref{2.1} and \eqref{2.3} that
\begin{equation}\label{2.4}
\begin{split}
\langle D^2u\nabla u,\nabla u\rangle
=&\langle\beta v^{\beta-1}D^2v(\beta v^{\beta-1}\nabla v),\beta v^{\beta-1}\nabla v\rangle\\
&+\langle\beta(\beta-1)v^{\beta-2}(\nabla v)^t\nabla v(\beta v^{\beta-1}\nabla v),\beta v^{\beta-1}\nabla v\rangle\\
=&\beta^3v^{3\beta-3}\langle D^2v\nabla v,\nabla v\rangle+\beta^3(\beta-1)v^{3\beta-4}|\nabla v|^4.
\end{split}
\end{equation}
Combining \eqref{2.1}, \eqref{2.2} and \eqref{2.4}, we arrive at
\begin{equation}\label{2.5}
\begin{split}
\hspace{-1.5em}\Delta_mu=&|\nabla u|^{m-2}\Delta u+(m-2)|\nabla u|^{m-4}\langle D^2u\nabla u,\nabla u\rangle\\
=&|\beta v^{\beta-1}\nabla v|^{m-2}\left[\beta v^{\beta-1}\Delta v+\beta(\beta-1)v^{\beta-2}|\nabla v|^2\right]\\
&+(m-2)|\beta v^{\beta-1}\nabla v|^{m-4}\left[\beta^3v^{3\beta-3}\langle D^2v\nabla v,\nabla v\rangle+\beta^3(\beta-1)v^{3\beta-4}|\nabla v|^4\right]\\
=&|\beta|^{m-2}\beta v^{(\beta-1)(m-1)}|\nabla v|^{m-2}\Delta v\\
&+(m-2)|\beta|^{m-2}\beta v^{(\beta-1)(m-1)}|\nabla v|^{m-4}\langle D^2v\nabla v,\nabla v\rangle\\
&+(m-1)(\beta-1)|\beta|^{m-2}\beta v^{(\beta-1)(m-1)-1}|\nabla v|^m.
\end{split}
\end{equation}
Substituting \eqref{2.1} and \eqref{2.5} into \eqref{1.1}, we have
\begin{equation*}
\begin{split}
&|\beta|^{m-2}\beta v^{(\beta-1)(m-1)}|\nabla v|^{m-2}\Delta v\\
&+(m-2)|\beta|^{m-2}\beta v^{(\beta-1)(m-1)}|\nabla v|^{m-4}\langle D^2v\nabla v,\nabla v\rangle\\
&+(m-1)(\beta-1)|\beta|^{m-2}\beta v^{(\beta-1)(m-1)-1}|\nabla v|^m\;
  =\; - |\beta|^pv^{\beta q+(\beta-1)p}|\nabla v|^p.
\end{split}
\end{equation*}
Dividing by $|\beta|^{m-2}\beta v^{(\beta-1)(m-1)}|\nabla v|^{m-2}$, we derive
\begin{equation}\label{2.6}
\begin{split}
\Delta v=&-(m-1)(\beta-1)\frac{|\nabla v|^2}{v}-(m-2)\frac{\langle D^2v\nabla v, \nabla v\rangle}{|\nabla v|^2}\\
& -|\beta|^{p-m}\beta v^{m-p-1+\beta(p+q-m+1)}|\nabla v|^{p-m+2}.
\end{split}
\end{equation}
Denoting
$$z=|\nabla v|^2,\ \ \ s=m-p-1+\beta(p+q-m+1),$$
we have
\begin{equation}\label{22.6}
\nabla z=2D^2v\nabla v.
\end{equation}
It follows from \eqref{2.6} and \eqref{22.6} that
\begin{equation}\label{3.1}
\Delta v=- (m-1)(\beta-1)\frac{z}{v}-\frac{m-2}{2}\frac{\langle\nabla z,\nabla v\rangle}{z}- |\beta|^{p-m}\beta v^sz^{\frac{p-m+2}{2}}.
\end{equation}
Direct computations imply that
\begin{equation}\label{2.9}
\begin{split}
& \hspace{-2em}\left\langle\nabla\left(\frac{\langle\nabla z,\nabla v\rangle}{z}\right),\nabla v\right\rangle\\
=&\frac{\langle\nabla\langle\nabla z,\nabla v\rangle,\nabla v\rangle}{z}+\langle\nabla z,\nabla v\rangle\left\langle\nabla\left(\frac{1}{z}\right),\nabla v\right\rangle\\
=&\frac{\langle D^2z\nabla v,\nabla v\rangle+\langle D^2v\nabla z,\nabla v\rangle}{z}-\frac{\langle\nabla z,\nabla v\rangle}{z^2}\langle\nabla z,\nabla v\rangle\\
=&\frac{\langle D^2z\nabla v,\nabla v\rangle}{z}+\frac{|\nabla z|^2}{2z}-\frac{\langle\nabla z,\nabla v\rangle^2}{z^2},
\end{split}
\end{equation}
\begin{equation}\label{2.10}
\left\langle\nabla\left(\frac{z}{v}\right),\nabla v\right\rangle=\left\langle-\frac{z\nabla v}{v^2}, \nabla v\right\rangle+\left\langle\frac{\nabla z}{v},\nabla v\right\rangle=-\frac{z^2}{v^2}+\frac{\langle\nabla z, \nabla v\rangle}{v}
\end{equation}
and
\begin{equation}\label{2.11}
\left\langle\nabla\left(v^{s}z^{\frac{p-m+2}{2}}\right),\nabla v\right\rangle=sv^{s-1}z^{\frac{p-m+4}{2}}+\frac{p-m+2}{2}v^{s}z^{\frac{p-m}{2}}\langle\nabla z,\nabla v\rangle.
\end{equation}
Substituting the expressions in \eqref{3.1}-\eqref{2.11} into
the following, we derive
\begin{equation}\label{2.12}
\begin{split}
& \hspace{-2em}\langle\nabla(\Delta v), \nabla v\rangle\\
=&-(m-1)(\beta-1)\left[-\frac{z^2}{v^2}+\frac{\langle\nabla z, \nabla v\rangle}{v}\right] \\
& -\frac{m-2}{2}\left[\frac{\langle D^2z\nabla v,\nabla v\rangle}{z}+\frac{|\nabla z|^2}{2z}-\frac{\langle\nabla z,\nabla v\rangle^2}{z^2}\right]\\
&-|\beta|^{p-m}\beta\left[sv^{s-1}z^{\frac{p-m+4}{2}}+\frac{p-m+2}{2}v^{s}z^{\frac{p-m}{2}}\langle\nabla z,\nabla v\rangle\right]\\
=&(m-1)(\beta-1)\frac{z^2}{v^2}-s|\beta|^{p-m}\beta v^{s-1}z^{\frac{p-m+4}{2}}-(m-1)(\beta-1)\frac{\langle\nabla z,\nabla v\rangle}{v}\\
&-\frac{p-m+2}{2}|\beta|^{p-m}\beta v^sz^{\frac{p-m}{2}}\langle\nabla z,\nabla v\rangle-\frac{m-2}{2}\frac{\langle D^2z\nabla v, \nabla v\rangle}{z}\\
&-\frac{m-2}{4}\frac{|\nabla z|^2}{z}+\frac{m-2}{2}\frac{\langle\nabla z,\nabla v\rangle^2}{z^2}.
\end{split}
\end{equation}
By the Cauchy-Schawarz inequality and \eqref{3.1}, we have
\begin{equation*}
\begin{split}
|D^2v|^2\geq&\frac{1}{N}(\Delta v)^2\\
=&\frac{1}{N}\Big[(m-1)^2(\beta-1)^2\frac{z^2}{v^2}+\frac{(m-2)^2}{4}\frac{\langle\nabla z,\nabla v\rangle^2}{z^2}+\beta^{2(p-m+1)}v^{2s}z^{p-m+2}\\
&+(m-1)(m-2)(\beta-1)\frac{\langle\nabla z,\nabla v\rangle}{v}+2(m-1)(\beta-1)|\beta|^{p-m}\beta v^{s-1}z^{\frac{p-m+4}{2}}\\
&+(m-2)|\beta|^{p-m}\beta v^sz^{\frac{p-m}{2}}\langle\nabla z,\nabla v\rangle\Big].
\end{split}
\end{equation*}
We recall the Bochner formula
\begin{equation*}
\frac12\Delta\left(|\nabla v|^2\right)=|D^2v|^2+\langle\nabla(\Delta v), \nabla v\rangle,\ \ \mathrm{where}\ |D^2v|^2=\sum_{i,j}(v_{ij})^2.
\end{equation*}
So that $z$ satisfies
\begin{equation}\label{3.2}
\begin{split}
&-\frac12\Delta z-\frac{m-2}{2}\frac{\langle D^2z\nabla v, \nabla v\rangle}{|\nabla v|^2}+\left[\frac{(m-1)^2(\beta-1)^2}{N}+(m-1)(\beta-1)\right]\frac{z^2}{v^2}\\
&+\frac{1}{N}\beta^{2(p-m+1)}v^{2s}z^{p-m+2}+\left[\frac{2(m-1)(\beta-1)}{N}-s\right]|\beta|^{p-m}\beta v^{s-1}z^{\frac{p-m+4}{2}}\\
&+(m-1)\left(\frac{m-2}{N}-1\right)(\beta-1)\frac{\langle\nabla z,\nabla v\rangle}{v}+\left[\frac{m-2}{2}+\frac{(m-2)^2}{4N}\right]\frac{\langle\nabla z,\nabla v\rangle^2}{z^2}\\
&+\left[\frac{m-2}{N}-\frac{p-m+2}{2}\right]|\beta|^{p-m}\beta v^sz^{\frac{p-m}{2}}\langle\nabla z,\nabla v\rangle-\frac{m-2}{4}\frac{|\nabla z|^2}{z}\leq0.
\end{split}
\end{equation}
 The coefficients of each term of \eqref{3.2} can be noted as
\begin{equation}\label{2.14a}
\begin{split}
&-\frac12\Delta z-\frac{m-2}{2}\frac{\langle D^2z\nabla v, \nabla v\rangle}{|\nabla v|^2}+A_1\frac{z^2}{v^2} +A_2v^{2s}z^{p-m+2}+A_3 v^{s-1}z^{\frac{p-m+4}{2}}\\
&+A_4\frac{\langle\nabla z,\nabla v\rangle}{v}
+A_5 \frac{\langle\nabla z,\nabla v\rangle^2}{z^2}
+A_6 v^sz^{\frac{p-m}{2}}\langle\nabla z,\nabla v\rangle
+A_7\frac{|\nabla z|^2}{z}\leq0,
\end{split}
\end{equation}
where $A_1$, $A_2$ and $A_3$ shall be crucial in Step 3.

\textbf{Step\ 2}. Estimates on $z$.

For any $\epsilon>0$, the Young's inequality implies that
\begin{equation}\label{2.15a}
\left|A_4\frac{\langle\nabla z, \nabla v\rangle}{v}\right|
\leq |A_4|\frac{|\nabla z||\nabla v|}{v}
=\frac{z}{v}\cdot |A_4|\frac{|\nabla z|}{z^{\frac12}}
\leq\epsilon\frac{z^2}{v^2} + \frac{A_4^2}{4\epsilon}\frac{|\nabla z|^2}{z},
\end{equation}
\begin{equation}\label{2.16a}
\left|A_5\frac{\langle\nabla z, \nabla v\rangle^2}{z^2}\right|
\leq |A_5|\frac{|\nabla z|^2|\nabla v|^2}{z^2}=|A_5|\frac{|\nabla z|^2}{z}
\end{equation}
and
\begin{equation}\label{2.17a}
\left|A_6v^{s}z^{\frac{p-m}{2}}\langle\nabla z, \nabla v\rangle\right|
\leq v^{s}z^{\frac{p-m+2}{2}}\cdot |A_6|\frac{|\nabla z|}{z^{\frac12}}
\leq\epsilon v^{2s}z^{p-m+2}+\frac{A_6^2}{4\epsilon}\frac{|\nabla z|^2}{z}.
\end{equation}
Substituting \eqref{2.15a}-\eqref{2.17a} into \eqref{2.14a}, we infer
\begin{equation}\label{2.18a}
\begin{split}
&-\frac12\Delta z-\frac{m-2}{2}\frac{\langle D^2z\nabla v, \nabla v\rangle}{|\nabla v|^2}
+(A_1-\epsilon)\frac{z^2}{v^2}\\
& + (A_2-\epsilon)v^{2s}z^{p-m+2} + A_3v^{s-1}z^{\frac{p-m+4}{2}}\leq C(\epsilon)\frac{|\nabla z|^2}{z},
\end{split}
\end{equation}
where $C(\epsilon)=\frac{A_4^2}{4\epsilon}+|A_5|+\frac{A_6^2}{4\epsilon}+|A_7|>0$.
We set
$$t \triangleq v^{s+1}z^{\frac{p-m}{2}}.$$
Then
\begin{equation*}
\begin{split}
H &\triangleq (A_1-\epsilon)\frac{z^2}{v^2} + (A_2-\epsilon)v^{2s}z^{p-m+2} + A_3v^{s-1}z^{\frac{p-m+4}{2}} = \frac{z^2}{v^2} T_{\epsilon}(t),
\end{split}
\end{equation*}
where
$$T_{\epsilon}(t)=(A_2-\epsilon)t^2+A_3t+(A_1-\epsilon),$$
which is a quadratic function with respect to $t$. If the discriminant of $T_{\epsilon}(t)$ is negative, there exists $\eta=\eta(N,m,p,q,\beta,\epsilon)>0$ such that
\[ T_{\epsilon}(t)\geq\eta(t^2+1) \geq \eta\max(t^2,1)\geq   \eta t^\theta \m\text{for any }\;
0\leq \theta\leq 2. \]
Since  $\beta>\max(0,\frac{p-m+1}{p+q-m+1})$, we have $s>0$, so that
$\theta=\frac2{s+1}\in (0,2)$. Substituting into the expression of $H$,  we find
\[
H \geq  \frac{z^2}{v^2} \; \eta \left[v^{s+1}z^{\frac{p-m}2}\right]^\theta = \eta z^{\frac{2s+p-m+2}{s+1}}.
\]
It follows from \eqref{2.18a} that
\begin{equation}\label{2.19a}
-\Delta z-(m-2)\frac{\langle D^2z\nabla v, \nabla v\rangle}{|\nabla v|^2}+2\eta z^{\frac{2s+p-m+2}{s+1}}\leq2C(\epsilon)\frac{|\nabla z|^2}{z},
\end{equation}
where
\begin{equation*}
\frac{2s+p-m+2}{s+1}-1=\frac{\beta(p+q-m+1)}{m-p+\beta(p+q-m+1)}>0
\end{equation*}
under our assumptions. 
Applying Lemma \ref{lem3.1} to \eqref{2.19a}, we deduce
\begin{equation*}
z(0)\leq CR^{-2-\frac{2(m-p)}{\beta(p+q-m+1)}},\ \ \mathrm{i.e.,}\ \ \left|\nabla u^{\frac 1{\beta}}(0)\right|\leq CR^{-1-\frac{m-p}{\beta(p+q-m+1)}},
\end{equation*}
where $C=C(N,m,p,q)$. Hence, the conclusion follows with $\alpha=\frac 1{\beta}>0$.

\textbf{Step\ 3.} Study of the quadratic polynomial $T_{\epsilon}(t)$.

Since $T_{\epsilon}$ is a continuous function with respect to $\epsilon$, the negativity of the discriminant of $T_0$ is equivalent to the negativity of the discriminant of $T_{\epsilon}$ for $0<\e<\epsilon_0$ and $\e_0$ small enough.
Clearly,
\begin{equation*}
\begin{split}
T_0(t)=&A_2t^2+A_3t+A_1\\
=&\frac{\beta^{2(p-m+1)}}{N}t^2
+\left[\frac{2(m-1)(\beta-1)}{N}-s\right]|\beta|^{p-m}\beta t\\
&+\frac{(m-1)^2(\beta-1)^2}{N}+(m-1)(\beta-1),
\end{split}
\end{equation*}
its discriminant $D$ satisfies
\begin{equation*}
    \begin{split}
        \beta^{2(m-1-p)}D&=\left[\frac{2(m-1)(\beta-1)}{N}-s\right]^2
        -\frac4N \left[\frac{(m-1)^2(\beta-1)^2}{N}+(m-1)(\beta-1)\right]\\
        &= s^2-\frac{4(m-1)(\beta-1)}{N}s-\frac{4(m-1)(\beta-1)}{N}.
    \end{split}
\end{equation*}
Set $Q=p+q-m+1$. It follows from $\beta-1=\frac{s-q}{p+q-m+1}$ that
\begin{equation*}
   \begin{split}
       D_1(s) &\triangleq NQ\beta^{2(m-1-p)}D\\
       & = NQs^2-4(m-1)(s-q)s-4(m-1)(s-q)\\
       & = [NQ-4(m-1)]s^2 + 4(m-1)(q-1)s + 4(m-1)q,
      \end{split}
\end{equation*}
which is a quadratic polynomial with respect to $s$. Note that $D<0$ is equivalent to $D_1<0$.  According to the sign of the coefficient of $s^2$, we split the discussion into three cases.

\textbf{$\mathrm{Case\ I}$.} The case $NQ-4(m-1)<0$, i.e., $Q<\frac{4(m-1)}{N}$. Since $D_1(s)$ is concave, we would choose large $s$ (large $\beta$) such that $D_1<0$.

\textbf{$\mathrm{Case\ II}$.} The case $NQ-4(m-1)=0$, i.e., $Q=\frac{4(m-1)}{N}$. It is clear that
\begin{equation*}
D_1(s)=4(m-1)(q-1)s+4(m-1)q.
\end{equation*}
If $0\leq q<1$, $D_1(s)$ is decreasing with respect to $s$, we would choose $s>\frac{q}{1-q}$ (i.e., $\beta>\frac{Q-q(p-m+1)}{(1-q)Q}$) such that $D_1<0$.

\textbf{$\mathrm{Case\ III}$.} The case $NQ-4(m-1)>0$, i.e., $Q>\frac{4(m-1)}{N}$. In this case, if $q<1$,
\[
\min_{0<s<\infty}D_1(s)  =  D_1\Big(\frac{2(m-1)(1-q)}{NQ-4(m-1)}\Big)
  =  \frac{4q(m-1)}{NQ-4(m-1)} \Big[ NQ- \frac{(m-1)(1+q)^2}{q}\Big] <0\]
if $NQ<\frac{(m-1)(1+q)^2}{q}$.
\end{proof}

\begin{cor}\label{cor3.1}
Let $\Omega\subset\mathbb R^N$ be a smooth domain. Under the assumptions of Theorem \ref{thm:1.2}, if $u$ is a positive weak solution of \eqref{1.1} in $\Omega$, there exist positive constants $d_0=d_0(\Omega)$ and $C=C(N,m,p,q,\Omega)$ such that
\begin{equation*}
u(x)\leq C\left[\left(\mathrm{dist}(x,\partial\Omega)\right)^{-\frac{m-p}{p+q-m+1}}+\max\{u(z): \mathrm{dist}(z,\partial\Omega)=d_0\}\right]
\end{equation*}
for any $z\in\Omega$. Clearly, the phenomenon of boundary blowup could occur for the case $p<m$.
\end{cor}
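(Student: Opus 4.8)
The plan is to derive Corollary \ref{cor3.1} from the local gradient estimate of Lemma \ref{lem3.2} by a standard rescaling/localization argument, converting a pointwise bound on $|\nabla u^{\alpha}|$ into a pointwise bound on $u$ via integration along a path to the boundary. First I would fix $x\in\Omega$, set $d(x)=\mathrm{dist}(x,\partial\Omega)$, and apply Lemma \ref{lem3.2} with the ball $B_R(x)\subset\Omega$, $R=d(x)$ (after translating so $x$ plays the role of the origin). This yields
\begin{equation*}
|\nabla u^{\alpha}(x)|\leq C\,d(x)^{-1-\alpha\frac{m-p}{p+q-m+1}}
\end{equation*}
for all $x\in\Omega$, with $\alpha,C$ depending only on $N,m,p,q$. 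Writing $w=u^{\alpha}$, this is a gradient bound for $w$ in terms of the distance to the boundary.

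Next I would integrate this gradient bound. Fix a reference level $d_0>0$ small enough (depending on $\Omega$) that the set $\Gamma_{d_0}=\{z\in\Omega:\mathrm{dist}(z,\partial\Omega)=d_0\}$ is a smooth hypersurface and every point $x$ with $d(x)\le d_0$ can be joined to $\Gamma_{d_0}$ by a path (e.g.\ a gradient flow line of the distance function, or a nearly-radial segment) along which the distance to $\partial\Omega$ increases monotonically from $d(x)$ to $d_0$, with length comparable to $d_0-d(x)\le d_0$ and staying inside $\Omega$. Parametrizing such a path $\gamma$ by arclength and using the fundamental theorem of calculus,
\begin{equation*}
w(x)\leq w(\gamma(d_0))+\int_{\gamma}|\nabla w|\,\leq\ \max_{z\in\Gamma_{d_0}}w(z)+C\int_{d(x)}^{d_0} t^{-1-\alpha\frac{m-p}{p+q-m+1}}\,\dif t.
\end{equation*}
Since $p<m$ and $p+q-m+1>0$ we have $\alpha\frac{m-p}{p+q-m+1}>0$, so the exponent $-1-\alpha\frac{m-p}{p+q-m+1}<-1$ and the integral is dominated by $C\,d(x)^{-\alpha\frac{m-p}{p+q-m+1}}$. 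Dividing the resulting bound on $w=u^{\alpha}$ by taking $\alpha$-th roots (using $\alpha>0$ and subadditivity of $t\mapsto t^{1/\alpha}$ when $1/\alpha\le 1$, or Young's inequality otherwise, absorbing constants) gives
\begin{equation*}
u(x)\leq C\Big[d(x)^{-\frac{m-p}{p+q-m+1}}+\max\{u(z):\mathrm{dist}(z,\partial\Omega)=d_0\}\Big],
\end{equation*}
which is the claimed estimate; for $x$ with $d(x)>d_0$ the bound is trivial since then $u(x)\le\max_{\Gamma_{d(x)}}$-type considerations or one simply covers the compact interior set by interior Harnack/Lemma \ref{lem3.2} estimates with $R$ bounded below. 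The final sentence of the corollary about boundary blow-up is then just the observation that the exponent $-\frac{m-p}{p+q-m+1}$ is strictly negative, so nothing prevents $u$ from being unbounded near $\partial\Omega$.

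The main obstacle is the geometric construction of the connecting path and making the monotonicity of the distance function along it rigorous: near $\partial\Omega$ the distance function is smooth (as $\partial\Omega$ is smooth), but globally one must either restrict to a tubular neighborhood of $\partial\Omega$ of width $d_0$ and handle the complementary compact region $\{d(x)\ge d_0\}$ separately (where $u$ is bounded by a fixed constant times $\max_{\Gamma_{d_0}}u$ plus a fixed constant, again via Lemma \ref{lem3.2} with $R\asymp d_0$ and a covering/chaining argument), or use a smooth regularized distance. A secondary technical point is the passage from the $u^{\alpha}$-bound back to a $u$-bound when $\alpha>1$, where one uses $ (a+b)^{1/\alpha}\le a^{1/\alpha}+b^{1/\alpha}$ fails and instead invokes $(a+b)^{1/\alpha}\le C_\alpha(a^{1/\alpha}+b^{1/\alpha})$, absorbing $C_\alpha$ into the constant $C$; this is routine. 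Apart from these localization bookkeeping issues, the proof is a direct integration of the Bernstein estimate.
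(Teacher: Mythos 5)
Your proposal is correct and follows the intended route: the paper offers no separate argument for Corollary \ref{cor3.1}, treating it as an immediate consequence of Lemma \ref{lem3.2}, and your implementation (apply the lemma at each point with $R=\mathrm{dist}(\cdot,\partial\Omega)$, integrate $|\nabla u^{\alpha}|\leq C\,\mathrm{dist}(\cdot,\partial\Omega)^{-1-\alpha\frac{m-p}{p+q-m+1}}$ along a normal/gradient-flow path up to the level set $\{\mathrm{dist}=d_0\}$, then take $\alpha$-th roots) is exactly the natural way to make that explicit. The bookkeeping you flag is indeed routine, with one small caution: the interior region $\{\mathrm{dist}\geq d_0\}$ should be handled by the chaining argument you describe (a maximum-principle-style bound by $\max_{\Gamma_{d(x)}}u$ would be unavailable, since $u$ is $m$-superharmonic), and absorbing the resulting additive constant into $C\,\mathrm{dist}(x,\partial\Omega)^{-\frac{m-p}{p+q-m+1}}$ uses that $\mathrm{dist}(x,\partial\Omega)$ is bounded above on $\Omega$, consistent with the corollary's dependence of $C$ on $\Omega$.
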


\begin{rem}
$\mathrm{(i)}$ Lemma \ref{lem3.2} is considered on $B_R$, which can be viewed as the region after taking a cut-off function in Bernstein-type estimates, so a cut-off function is not introduced in the calculation process.\\
$\mathrm{(ii)}$ The value of $\alpha$ is related to the parameters of transformation in the calculation process, that is, $\alpha=\frac1{\beta}$, which is not easily expressed by $N,m,p,q$. However, this difficulty can be bypassed in the proof of Liouville-type theorem.
\end{rem}

\begin{proof}[Proof of Theorem \ref{thm:1.2}]
When $x\in\mathbb{R}^N$, $R$ which appears in Lemma \ref{lem3.2} can be chosen arbitrarily large. Lemma \ref{lem3.2} implies that $\left|\nabla \left(u^{\alpha}\right)\right|\to0$ as $R\to\infty$. In other words, $u^{\alpha}$ is a constant. It follows from $\alpha>0$ that $u$ is a constant.
\end{proof}

\subsection{The case $q(N-m)+p(N-1)<N(m-1)$}
\begin{proof}[Proof of Theorem \ref{thm:1.4}]
Assume $u$ is a solution of \eqref{s}. For $p+q-m+1\neq0$, set
$$u=v^b,\ \   b(b-1)>0.$$
A calculation which is similar to \eqref{2.6} yields
\begin{equation}\label{s1}
-b\Delta_mv\geq(m-1)(b-1)b\frac{|\nabla v|^m}{v}+|b|^{p-m+2}v^s|\nabla v|^{p}
\end{equation}
with
\begin{equation}\label{ss1}
s=m-p-1+b(p+q-m+1).
\end{equation}
If $s>0$, by the H$\mathrm{\ddot{o}}$lder inequality, we deduce
\begin{equation}\label{s2}
|\nabla v|^{\frac{sm+p}{s+1}}=\left[\frac{|\nabla v|^m}{v}\right]^{\frac{s}{s+1}}\left[v^s|\nabla v|^{p}\right]^{\frac{1}{s+1}}\leq\epsilon^{\frac{s+1}{s}}\frac{|\nabla v|^m}{v}+\epsilon^{-1-s}v^s|\nabla v|^{p}
\end{equation}
for any $\epsilon>0$. From \eqref{s1} and \eqref{s2}, there exists $C>0$ such that
\begin{equation}\label{s3}
-b\Delta_mv\geq C|\nabla v|^{\alpha},
\end{equation}
where
\begin{equation*}
\alpha=\frac{sm+p}{s+1}=\frac{(m-1)(m-p)+mb(p+q-m+1)}{m-p+b(p+q-m+1)}.
\end{equation*}
We claim that
\begin{equation}\label{s4}
\alpha<\frac{N(m-1)}{N-1}.
\end{equation}
We shall discuss three cases as follows.

(i) If $p+q-m+1>0$, we take $b=1+\epsilon$ with $\epsilon>0$ so that
\begin{equation}\label{s5}
s=q+\epsilon(p+q-m+1)
\end{equation}
and $s>m-p-1$. Therefore $\alpha>m-1$. By \eqref{s0}, we choose $\epsilon>0$ small enough such that
\begin{equation}\label{s6}
s(N-m)+p(N-1)<N(m-1),
\end{equation}
which is equivalent to \eqref{s4}.

(ii) If $p+q-m+1<0$, then $m-p-1>q\geq0$. We take $b=-\epsilon$ with $\epsilon>0$, the inequality \eqref{s3} would be written as
\begin{equation*}
-|b|\Delta_mv+C|\nabla v|^{\alpha}\leq0,
\end{equation*}
where $s=m-p-1-\epsilon(p+q-m+1)>m-p-1>0$. Hence
\[
\begin{split}
    \alpha &=\frac{(m-1)[m-p+b(p+q-m+1)]+b(p+q-m+1)}{m-p+b(p+q-m+1)}\\
    &=m-1+\frac{b(p+q-m+1)}{m-p+b(p+q-m+1)}>m-1.
\end{split}
\]
We choose $\epsilon$ small enough such that $\alpha<\frac{N(m-1)}{N-1}.$

(iii) If $p+q-m+1=0$, setting $u=e^{v}$, we derive
\begin{equation*}
-\Delta_mv\geq(m-1)|\nabla v|^m+e^{(p+q-m+1)v}|\nabla v|^{p}.
\end{equation*}
For any $\tilde{\alpha}\in[p,m]$, there exists $\tilde{C}>0$ such that
\begin{equation*}
-\Delta_mv\geq\tilde{C}|\nabla v|^{\tilde{\alpha}}.
\end{equation*}
Under our assumption $N>m$, we may take $m-1<\tilde{\alpha}<\frac{N(m-1)}{N-1}$ such that \eqref{s4} holds.

Set $R>0$. We select a cut-off function $\xi\in C_0^{\infty}(R^N)$, which satisfies
\begin{equation*}
\xi=\left\{\begin{array}{ll}
1,&\mathrm{on}\ B_{\frac{R}{2}},\\
0, &\mathrm{on}\ B_{R}^{c}
\end{array}\right.
\end{equation*}
and
\begin{equation*}
|\nabla\xi|\leq2R^{-1}.
\end{equation*}
In all cases, by multiplying  \eqref{s3} with
$\xi^{\frac{\alpha}{\alpha-m+1}} $ and then integrating by parts, using also the H$\ddot{\mathrm{o}}$lder inequality, we obtain
\begin{equation*}
\begin{split}
\int_{B_R}\xi^{\frac{\alpha}{\alpha-m+1}}|\nabla v|^{\alpha}&\leq C\left|\int_{B_R}|\nabla v|^{m-2}\nabla v\cdot\nabla\left(\xi^{\frac{\alpha}{\alpha-m+1}}\right)\right|\\
&\leq C\left|\int_{B_R}|\nabla v|^{m-1}\xi^{\frac{m-1}{\alpha-m+1}}|\nabla\xi|\right|\\
&\leq\frac12\int_{B_R}\left[\xi^{\frac{m-1}{\alpha-m+1}}|\nabla v|^{m-1}\right]^{\frac{\alpha}{m-1}}+C\int_{B_R}|\nabla\xi|^{\frac{\alpha}{\alpha-m+1}}\\
&=\frac12\int_{B_R}\xi^{\frac{\alpha}{\alpha-m+1}}|\nabla v|^{\alpha}+C\int_{B_R}|\nabla\xi|^{\frac{\alpha}{\alpha-m+1}}.
\end{split}
\end{equation*}
Thus,
\begin{equation*}
\int_{B_R}\xi^{\frac{\alpha}{\alpha-m+1}}|\nabla v|^{\alpha}\leq2C\int_{B_R}|\nabla\xi|^{\frac{\alpha}{\alpha-m+1}}\leq C'R^{N-\frac{\alpha}{\alpha-m+1}};
\end{equation*}
the exponent is negative due to \eqref{s4}, and this
implies the conclusion.
\end{proof}

\section{Applications}

\subsection{Harnack inequalities}

\paragraph{\ \ \ In this subsection, we present some new Harnack inequalities given by Theorem \ref{thm:5.3}. We first give the following useful lemma which will be needed for establishing Theorem \ref{thm:5.3}.}
\begin{lem}\label{lem5.1}
Let $u$ be a positive weak $C^1$ solution of the inequality
\begin{equation}\label{5.1}
-\Delta_mu\geq  bu^q|\nabla u|^p+u^{\alpha_1}-M_2|\nabla u|^{\alpha_2}\ \ \ \mathrm{in}\ \Omega,
\end{equation}
where the constants satisfy $b\ge 0$,  $p,q\in(-\infty,\infty)$, $\alpha_1>m-1$ and $m-1<\alpha_2<\frac{m\alpha_1}{\alpha_1+1}$. Assume   $\gamma\in(0,\alpha_1]\cup \left(0, \frac{N(m-1)}{N-m}\right)$,
$\kappa\in\left[\alpha_1-m+1,\rule{0ex}{.9em}\alpha_1\right]$,
$\mu\in\left(0,\frac{m\alpha_1}{\alpha_1+1}\right)\cup\left(0, \frac{N(m-1)}{N-1}\right)$, $\eta\in (1,m]$.   Let $R_0>0$ and $0<R<R_0$ such that $B_{2R}\subset\Omega$, where $B_{2R}=\left\{x\in\Omega:|x|<2R\right\}$. Then there exists $C=C(N,M_2, m,p,q,\gamma,\mu,\alpha_1,\alpha_2,R_0)>0$, independent of $b$, such that
\begin{eqnarray}\label{5.2}
&\dis\int_{B_R}u^{\gamma}\leq CR^{N-\frac{m\gamma}{\alpha_1-m+1}},  \\
\label{5.11}
&\dis b\int_{B_R}u^{q+\kappa-\alpha_1}|\nabla u|^p\leq CR^{N-\frac{m\kappa}{\alpha_1-m+1}},\\
\label{dd}
&\dis\int_{B_R}u^{-\eta}|\nabla u|^m\leq CR^{N-\frac{m(\alpha_1-\eta+1)}{\alpha_1-m+1}},\\
\label{5.3}
&\dis\int_{B_R}|\nabla u|^{\mu}\leq CR^{N-\frac{(\alpha_1+1)\mu}{\alpha_1-m+1}}.
\end{eqnarray}
\end{lem}
\begin{proof}
Let $\xi(x)$ be a cut-off function on $B_{2}(0)$, which satisfies
\begin{equation*}
0\leq\xi(x)\leq1\ \mathrm{for}\ |x|<2,\ \xi(x)=1\ \mathrm{for}\ |x|\leq 1,\
\xi(x)= 0 \text{ for } |x|\ge 2, \; |\nabla\xi(x)|\leq2.
\end{equation*}
We take
\begin{equation}\label{0.7777}
\phi=\left[\xi\left(\frac{x}{R}\right)\right]^ku^{-d}
\end{equation}
as a test function for \eqref{5.1},
Simple calculations imply that
\begin{equation*}
\nabla\phi=u^{-d}\nabla(\xi^k)-d\xi^ku^{-d-1}\nabla u
\end{equation*}
and
\begin{equation}\label{5.4}
|\nabla\xi^k|=k\xi^{k-1}|\nabla\xi|\leq\frac{2k\xi^{k-1}}{R}.
\end{equation}
It follows from \eqref{5.1} that
\begin{equation}\label{0.6666}
\int_{\Omega}|\nabla u|^{m-2}\nabla u\cdot\nabla\phi\geq b\int_{\Omega}u^q|\nabla u|^p\phi+\int_{\Omega}u^{\alpha_1}\phi-M_2\int_{\Omega}|\nabla u|^{\alpha_2}\phi.
\end{equation}
By \eqref{0.7777}, the inequality \eqref{0.6666} implies
\begin{equation}\label{5.5}
\begin{split}
&d\int_{\Omega}\xi^ku^{-d-1}|\nabla u|^m+ b\int_{\Omega}\xi^ku^{q-d}|\nabla u|^{p}+\int_{\Omega}\xi^ku^{\alpha_1-d}\\
\leq&\int_{\Omega}u^{-d}|\nabla u|^{m-1}|\nabla\xi^k|+M_2\int_{\Omega}\xi^ku^{-d}|\nabla u|^{\alpha_2}.
\end{split}
\end{equation}
{\bf Case 1. $d>0$}.
Combining \eqref{5.4} and the Young's inequality in the form of
\begin{equation}\label{0.11111}
ab\leq\epsilon a^{\beta}+\epsilon^{\frac{1}{1-\beta}}b^{\frac{\beta}{\beta-1}}\ \ \ \forall\beta>1,\ \epsilon>0,
\end{equation}
we obtain,
\begin{equation*}
\int_{\Omega}u^{-d}|\nabla u|^{m-1}|\nabla\xi^k|\leq\frac{d}{2}\int_{\Omega}\xi^ku^{-d-1}|\nabla u|^{m}+CR^{-m}\int_{\Omega}\xi^{k-m}u^{-d-1+m},
\end{equation*}
where $C=C(d)>0$. Thus the inequality \eqref{5.5} implies
\begin{equation}\label{5.6}
\begin{split}
&\frac{d}{2}\int_{\Omega}\xi^ku^{-d-1}|\nabla u|^m+b\int_{\Omega}\xi^ku^{q-d}|\nabla u|^{p}+\int_{\Omega}\xi^ku^{\alpha_1-d}\\
\leq&CR^{-m}\int_{\Omega}\xi^{k-m}u^{-d-1+m}+M_2\int_{\Omega}\xi^ku^{-d}|\nabla u|^{\alpha_2}.
\end{split}
\end{equation}
In the following, we start to estimate each term appearing on the right side of \eqref{5.6}.

First, we focus on the last term on the right side of \eqref{5.6}. By the Young's inequality, we get
\begin{equation*}
u^{-d}|\nabla u|^{\alpha_2}\leq\frac{d}{4M_2}u^{-d-1}|\nabla u|^m+Cu^{\tau},
\end{equation*}
where $\tau=\left[-d+\frac{(d+1)\alpha_2}{m}\right]\frac{m}{m-\alpha_2}$ and $C=C(d,M_2)>0$. So we have
\begin{equation}\label{0.8888}
M_2\int_{\Omega}\xi^ku^{-d}|\nabla u|^{\alpha_2}\leq\frac{d}{4}\int_{\Omega}\xi^ku^{-d-1}|\nabla u|^m+C\int_{\Omega}\xi^ku^{\tau}.
\end{equation}
Since we have assumed $\alpha_2<\frac{m\alpha_1}{\alpha_1+1} $, we derive $\tau<\alpha_1-d$. If we further require
\be \label{d}
d<\frac{\alpha_2}{m-\alpha_2},
\ee
then $0< \tau<\alpha_1-d$. It follows from Young's inequality   that
\begin{equation}\label{0.9999}
C\int_{\Omega}\xi^ku^{\tau}\leq\frac{1}{2}\int_{\Omega}\xi^ku^{\alpha_1-d}+CR^{N},
\end{equation}
where $C>0$. By \eqref{0.8888} and \eqref{0.9999}, we get
\begin{equation}\label{5.7}
M_2\int_{\Omega}\xi^ku^{-d}|\nabla u|^{\alpha_2}\leq\frac{d}{4}\int_{\Omega}\xi^ku^{-d-1}|\nabla u|^m+\frac{1}{2}\int_{\Omega}\xi^ku^{\alpha_1-d}+CR^{N}.
\end{equation}
Substituting into \re{5.6} we obtain
\be \label{g5}
\begin{split}
& \hspace{-2em}\frac{d}{4}\int_{\Omega}\xi^ku^{-d-1}|\nabla u|^m+b\int_{\Omega}\xi^ku^{q-d}|\nabla u|^{p}+\frac 12\int_{\Omega}\xi^ku^{\alpha_1-d}\\
\leq&CR^{-m}\int_{\Omega}\xi^{k-m}u^{-d-1+m} +CR^{N}.
\end{split}
\ee
Since   $\alpha_1>m-1$, we derive $\alpha_1-d>-d-1+m$. Choosing $d \leq m-1 $  (since
$m-1 <\frac{\alpha_2}{m-\alpha_2} $, \re{d} is satisfied) and $k=\frac{m(\alpha_1-d)}{\alpha_1-m+1}$, we deduce from   Young's inequality
(when $d<m-1$) that
\be\label{g4}
\begin{split}
    CR^{-m}\int_{\Omega}\xi^{k-m}u^{-d-1+m}&\leq
    \frac{1}{4}\int_{\Omega}\xi^k u^{\alpha_1-d}+C\int_{\Omega}\xi^{k-\frac{m(\alpha_1-d)}{\alpha_1-m+1}}R^{-\frac{m(\alpha_1-d)}{\alpha_1-m+1}}\\
    &\leq \frac{1}{4}\int_{\Omega}\xi^ku^{\alpha_1-d}+CR^{N-\frac{m(\alpha_1-d)}{\alpha_1-m+1}},
\end{split}
\ee
where $C=C(\alpha_1,m)>0$; the above inequality is obviously valid if $d=m-1$. It follows from \eqref{g5} and \eqref{g4} that
\be \label{g6}
\begin{split}
& \hspace{-2em}\frac{d}{4}\int_{\Omega}\xi^ku^{-d-1}|\nabla u|^m+b\int_{\Omega}\xi^ku^{q-d}|\nabla u|^{p}+\frac 14\int_{\Omega}\xi^ku^{\alpha_1-d}\\
\leq& CR^{N-\frac{m(\alpha_1-d)}{\alpha_1-m+1}}+ CR^N
 \leq CR^{N-\frac{m(\alpha_1-d)}{\alpha_1-m+1}} \mm\text{ for any } 0<d\leq m-1,
\end{split}
\ee
where $C$ depends the upper bound of $R$. Choosing $d=\eta-1$ we
obtain \re{dd}.

For any $\gamma\in (0, \alpha_1)$,
we choose $d$ sufficiently small so that $\gamma<\alpha_1-d$, then by   H\"older's inequality and \re{g6},
\be\label{g2}
\begin{split}
\int_{B_R}u^{\gamma}&
\leq CR^{N-\frac{N\gamma}{\alpha_1-d}} \left[\int_{B_R}u^{\alpha_1-d}\right]^{\frac{\gamma}{\alpha_1-d}}
\leq CR^{N-\frac{m\gamma}{\alpha_1-m+1}}.
\end{split}
\ee
This establishes \re{5.2} with $\gamma\in(0,\alpha_1)$.

Finally, since $0<\alpha_1-\kappa \leq m-1 $,   we choose $d=\alpha_1-\kappa$ in \re{g6} to obtain \re{5.11}
with $ \kappa\in[ \alpha_1-m+1 ,\alpha_1 )$.

{\bf Case 2. $d=0$.} We now let $d=0$ in \re{5.6}.
Since $m-1<\alpha_1$ and $\alpha_2<\frac{m\alpha_1}{\alpha_1+1}$, we apply \re{g2} and \re{5.3} to obtain
\be\label{D0}
\begin{split}
b\int_{\Omega}\xi^ku^{q}|\nabla u|^{p}+\int_{\Omega}\xi^ku^{\alpha_1}
\leq & CR^{-m}\int_{\Omega}\xi^{k-m}u^{m-1}+M_2\int_{\Omega}\xi^k|\nabla u|^{\alpha_2}\\
\leq & CR^{N-\frac{m\alpha_1}{\alpha_1-m+1}} + CR^{N-\frac{(\alpha_1+1)\alpha_2}{\alpha_1-m+1}}\\
\leq & CR^{N-\frac{m\alpha_1}{\alpha_1-m+1}},
\end{split}
\ee
where $C$ depends the upper bound of $R$. Thus we obtain
\re{5.2} with $\gamma=\alpha_1$ and \re{5.11} with $\kappa = \alpha_1$.

{\bf Case 3.} For $0<\gamma<\frac{N(m-1)}{N-m}$, take $0<d< m-1$
such that
$\gamma=\frac{N(m-d-1)}{N-m}$. Since $0<m-d-1<m-1<\alpha_1$, it follows from the Sobolev embedding theorem, \re{g6} and \re{5.2} that
\be\label{D1a}
\begin{split}
    \left[\int_{\Omega}\left(\xi^{\frac km}u^{\frac{m-d-1}m}\right)^{m^*}\right]^{\frac{m}{m^*}}
    & \leq C\int_{\Omega}\left|\nabla\left(\xi^{\frac km}u^{\frac{m-d-1}m}\right)\right|^m\\
    & \leq C\int_{\Omega}\xi^ku^{-d-1}|\nabla u|^m + C\int_{\Omega}\xi^{k-m}u^{m-d-1}|\nabla \xi|^{m}\\
    & \leq CR^{N-\frac{m(\alpha_1-d)}{\alpha_1-m+1}} + CR^{-m}\int_{\Omega}\xi^{k-m}u^{m-d-1}\\
    & \leq CR^{N-\frac{m(\alpha_1-d)}{\alpha_1-m+1}},
\end{split}
\ee
where $m^*=\frac{Nm}{N-m}$. A computation shows
\[
\left[N-\frac{m(\alpha_1-d)}{\alpha_1-m+1}\right]\frac{m^*}{m}=
N-\frac{mN(m-d-1)}{(\alpha_1-m+1)(N-m)},
\]
so that
\be \label{gg}
\int_{B_R} u^\gamma = \int_{B_R} u^{\frac{N(m-d-1)}{N-m} } \le
CR^{N-\frac{m\gamma}{\alpha_1-m+1}}.
\ee
This completes the proof of all cases of \re{5.2}.

We now proceed to derive \re{5.3}. If $\mu<m$, then
we apply   H\"{o}lder's inequality to derive
\begin{equation*}
\int_{B_R}|\nabla u|^{\mu}\leq\left[\int_{B_R}u^{-d-1}|\nabla u|^{m}\right]^{\frac{\mu}{m}}\left[\int_{B_R}u^{\frac{(d+1)\mu}{m-\mu}}\right]^{1-\frac{\mu}{m}}.
\end{equation*}

{\bf Case  $0<\mu<\frac{m\alpha_1}{\alpha_1+1} $:} In this case $\mu<m$.
The assumption  $\mu<\frac{m\alpha_1}{\alpha_1+1}$ is equivalent to
$\frac{ \mu}{m-\mu}<\alpha_1$. We choose $d$ sufficiently small such that $\frac{(d+1)\mu}{m-\mu}<\alpha_1$. Combining \eqref{g6} and \eqref{g2}, we deduce \eqref{5.3}.

{\bf Case $0<\mu< \frac{N(m-1)}{N-1}$: } Since $N>m$, in this case we also have $\mu<m$, and, $\frac{\mu}{m-\mu}<\frac{N(m-1)}{N-m}$.
We choose $d$ sufficiently small such that $\frac{(d+1)\mu}{m-\mu}<\frac{N(m-1)}{N-m}$. Combining \eqref{g6} and \eqref{gg}, we deduce \eqref{5.3}.
\end{proof}

The following lemma is taken from \cite{R},
which is a generalization of Lemma 4.2 in \cite{SZ}. Both lemmas are associated
with the quasilinear equation
\begin{equation*}
\mathrm{div}\mathcal{A}(x,u,\nabla u)=\mathcal{B}(x,u,\nabla u)
\end{equation*}
considered in \cite[Theorem 5]{S1}  under appropriate assumptions on $\mathcal{A}$ and $\mathcal{B}$. After restriction to the operator $\mathcal{A}(x,u,\nabla u)=|\nabla u|^{m-2}\nabla u$, the factors $R^{1-\frac{N}{\sigma'}}$ and $R^{m-\frac{N}{\sigma}}$ come naturally to rescale to the unit ball $B_1$ in the process of proof.
\begin{lem}\label{lem5.2}(\cite[Lemma 2.2]{R})
Let $u$ be a nonnegative weak solution of inequality
\begin{equation*}
|\Delta_mu|\leq c(x)|\nabla u|^{m-1}+d(x)u^{m-1}+f(x)\ \ \mathrm{in}\ \Omega,
\end{equation*}
where $c\in L^{\sigma'}(\Omega)$, $d,f\in L^{\sigma}(\Omega)$, $\sigma'>N$ and $\sigma\in\left(\frac{N}{m},\frac{N}{m-1}\right)$. Then for every $R>0$ such that $B_{2R}\subset\Omega$, there exists a positive constant $C$ depending on
\begin{equation*}
N,\ m,\ \sigma,\ \sigma',\ R^{1-\frac{N}{\sigma'}}\|c\|_{L^{\sigma'}},\ R^{m-\frac{N}{\sigma}}\|d\|_{L^{\sigma}}
\end{equation*}
such that
\begin{equation*}
\sup_{B_R}u\leq C\left[\inf_{B_R}u+R^{m-\frac{N}{\sigma}}\|f\|_{L^{\sigma}}\right].
\end{equation*}
\end{lem}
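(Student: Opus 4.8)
The plan is to view Lemma~\ref{lem5.2} as a Harnack inequality inside the classical Serrin--Trudinger theory for quasilinear equations with measurable coefficients, and to deduce it by combining a local $\sup$--bound for subsolutions with a weak Harnack inequality for supersolutions. We may assume $c,d,f\ge0$. First I would rescale: replacing $u(x)$ by $u(Rx)$ reduces everything to $R=1$, $B_2\subset\Omega$, in which case the hypotheses become $|\Delta_m u|\le c|\nabla u|^{m-1}+d\,u^{m-1}+f$ on $B_2$ with $\|c\|_{L^{\sigma'}(B_2)}$, $\|d\|_{L^{\sigma}(B_2)}$, $\|f\|_{L^{\sigma}(B_2)}$ now equal to the scale--invariant quantities $R^{1-N/\sigma'}\|c\|_{L^{\sigma'}}$, $R^{m-N/\sigma}\|d\|_{L^{\sigma}}$, $R^{m-N/\sigma}\|f\|_{L^{\sigma}}$ appearing in the statement, so that it suffices to produce a constant depending only on $N,m,\sigma,\sigma'$ and these norms. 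Next I would set $k$ equal to the power of $\|f\|_{L^{\sigma}(B_2)}$ dictated by the $(m-1)$--homogeneity of $\Delta_m$ and put $\bar u:=u+k$. Since $\bar u\ge k>0$ one has $f\le k^{1-m}f\,\bar u^{m-1}$ and $u\le\bar u$, so $\bar u$ is simultaneously a positive weak sub- and supersolution of
\begin{equation*}
-\Delta_m\bar u\le c|\nabla\bar u|^{m-1}+\tilde d\,\bar u^{m-1},\qquad
\Delta_m\bar u\le c|\nabla\bar u|^{m-1}+\tilde d\,\bar u^{m-1},
\end{equation*}
with $\tilde d:=d+k^{1-m}f$ still controlled in $L^{\sigma}(B_2)$, and the free term has disappeared. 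The remaining task is the clean Harnack estimate $\sup_{B_1}\bar u\le C\inf_{B_1}\bar u$; undoing the rescaling and the shift then yields the claimed inequality.

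The core of the proof is a Moser iteration run on both sides. Testing each of the two inequalities with $\phi=\eta^m\bar u^{\beta}$ ($\eta$ a cutoff) and setting $w:=\bar u^{(\beta+m-1)/m}$, I would obtain a Caccioppoli--type inequality
\begin{equation*}
\int\eta^m|\nabla w|^m\ \lesssim\ \int|\nabla\eta|^m w^m+\int\big(c\,\eta^m|\nabla w|^{m-1}w+\tilde d\,\eta^m w^m\big).
\end{equation*}
Iterating this with exponents $\beta\to+\infty$ over a nested family of balls between $B_1$ and $B_{3/2}$, together with the Sobolev inequality, produces the local boundedness estimate $\sup_{B_1}\bar u\le C\big(\int_{B_{3/2}}\bar u^{p_0}\big)^{1/p_0}$ valid for any fixed $p_0>0$. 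Running the same scheme with negative exponents $\beta<m-1$ gives the reverse iteration for supersolutions; the passage across the critical value $\beta=m-1$ is handled by the logarithmic test function $\phi=\eta^m\bar u^{1-m}$, which forces $\log\bar u\in\mathrm{BMO}(B_{3/2})$ with norm bounded by the data, so that the John--Nirenberg inequality yields $\big(\int_{B_{3/2}}\bar u^{p_0}\big)^{1/p_0}\le C\inf_{B_1}\bar u$ for some small $p_0>0$. Choosing a common $p_0$ and chaining the two inequalities gives $\sup_{B_1}\bar u\le C\inf_{B_1}\bar u$.

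The hard part is the first order term $c\,|\nabla u|^{m-1}$: after one integration by parts it is exactly of the order of the principal part, so it is not a lower order perturbation and the argument used for $\tilde d\in L^{\sigma}$, $\sigma>N/m$, does not apply to it. The point is to use $\sigma'>N$ \emph{strictly}: by H\"older, $\int c\,\eta^m|\nabla w|^{m-1}w\le\|c\|_{L^{\sigma'}}\|\eta\nabla w\|_{L^m}^{m-1}\|\eta w\|_{L^{\theta}}$ with $\theta$ determined by $\tfrac1{\sigma'}+\tfrac{m-1}{m}+\tfrac1\theta=1$, and $\sigma'>N$ forces $\theta<m^*=\tfrac{Nm}{N-m}$; interpolating $\|\eta w\|_{L^\theta}$ between $L^m$ and $L^{m^*}$ and then applying Young's inequality bounds this term by $\varepsilon\,\|\nabla(\eta w)\|_{L^m}^m$ plus genuinely lower order terms, with $\varepsilon$ as small as we wish, and the first piece is absorbed on the left. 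Carrying out this absorption uniformly along the iteration, and verifying that every constant that appears depends only on $N,m,\sigma,\sigma'$ and on the scale--invariant quantities $R^{1-N/\sigma'}\|c\|_{L^{\sigma'}}$, $R^{m-N/\sigma}\|d\|_{L^{\sigma}}$, is the delicate bookkeeping of the argument. (Alternatively, one may simply invoke the Harnack inequality of Serrin and Trudinger for quasilinear equations under the stated integrability of the coefficients, which is how Lemma~\ref{lem5.2} is obtained in \cite[Lemma 2.2]{R}.)
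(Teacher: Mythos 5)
The paper offers no proof of Lemma \ref{lem5.2}: it is quoted directly from \cite[Lemma 2.2]{R}, which (as the remark following the lemma notes) is Serrin's Harnack inequality \cite[Theorem 5]{S1}, see also \cite{T1}, specialized to the operator $|\nabla u|^{m-2}\nabla u$, the factors $R^{1-\frac{N}{\sigma'}}$ and $R^{m-\frac{N}{\sigma}}$ arising from rescaling to the unit ball. So your closing parenthetical is exactly the paper's route, and your Moser-iteration sketch (rescaling to $R=1$, absorbing $f$ into the zero-order coefficient via the shift $\bar u=u+k$, Caccioppoli estimates for positive and negative powers of $\bar u$, the logarithmic test function plus John--Nirenberg to cross $\beta=m-1$, and the interpolation that uses $\sigma'>N$ strictly to absorb the term $c|\nabla u|^{m-1}$) is the standard Serrin--Trudinger argument; as a sketch it is sound, and it supplies the proof the paper delegates to the literature.

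One step is not accurate as stated, namely the claim that undoing the rescaling and the shift "yields the claimed inequality." With $k=\|f\|_{L^\sigma(B_2)}^{1/(m-1)}$ the Harnack chain $\sup_{B_1}\bar u\le C\inf_{B_1}\bar u$ gives, after unscaling, $\sup_{B_R}u\le C\bigl[\inf_{B_R}u+\bigl(R^{m-\frac{N}{\sigma}}\|f\|_{L^\sigma}\bigr)^{1/(m-1)}\bigr]$, which coincides with the displayed power-one form only when $m=2$. In fact the power-one form cannot hold for $m\neq2$ with a constant independent of $\|f\|$: take $c=d=0$, $f\equiv\epsilon$ and $u(x)=a|x|^{m/(m-1)}$ with $N\left(\frac{am}{m-1}\right)^{m-1}=\epsilon$, so that $\Delta_mu\equiv\epsilon$, $\inf_{B_R}u=0$ and $\sup_{B_R}u\sim\epsilon^{1/(m-1)}R^{m/(m-1)}$; the bound $\sup_{B_R}u\le C\epsilon R^{m}$ then fails as $\epsilon\to0$ when $m>2$ and as $\epsilon\to\infty$ when $1<m<2$. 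Thus the exponent $1/(m-1)$ produced by your argument is the correct one (it is the form of $k$ in \cite[Theorem 5]{S1}), and the statement quoted from \cite{R} should be read with that power; you should either track this exponent explicitly or note the correction. The discrepancy is harmless downstream: in Theorem \ref{thm:5.3} and in Step 2 of Proposition \ref{prop6.2} the inhomogeneous term is only used through its boundedness for bounded $\lambda$ and $R$, so replacing $R^m\lambda$ by $(R^m\lambda)^{1/(m-1)}$ changes nothing in those arguments.
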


Combining Lemmas \ref{lem5.1} and \ref{lem5.2}, we derive the following Harnack inequality that is useful for deriving a priori estimates in Subsection 3.2.

\begin{thm}\label{thm:5.3}
Let $u$ be a positive weak solution of the inequality
\begin{equation*}
bu^q|\nabla u|^p+u^{\alpha_1}\!-M_2|\nabla u|^{\alpha_2}\!\leq - \Delta_mu\leq c_0bu^q|\nabla u|^p\!+ M_1 u^{\alpha_1}\!+M_2|\nabla u|^{\alpha_2}\!+\lambda\ \ \mathrm{in}\ \Omega,
\end{equation*}
where $b=1$ or $0$, and
the constants satisfy $\lambda>0$, $c_0,M_1\geq1$, $1<m<N$, $m-1<p<m$, $q\ge 0$, $\frac{p+mq}{m-p}\leq\alpha_1<\frac{N(m-1)}{N-m}$, $m-1<\alpha_2<\frac{m\alpha_1}{\alpha_1+1}$, and $q(N-m)+p(N-1)<N(m-1)$.  Let $R_0>0$ and $0<R<R_0$ such that $B_{2R}\subset\Omega$. Then there exists a positive constant $C$ depending on $N,m,p,q,\alpha_1,\alpha_2,c_0, c_1,M_2,R_0$ such that
\begin{equation*}
\sup_{B_R}u\leq C\left[\inf_{B_R}u+R^m\lambda\right].
\end{equation*}
\end{thm}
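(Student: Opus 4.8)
The goal is to prove the Harnack inequality of Theorem \ref{thm:5.3} by reducing the two-sided differential inequality for $u$ to the hypotheses of Lemma \ref{lem5.2}. The strategy is to identify, from the two-sided bound $u^q|\nabla u|^p + M_1 u^{\alpha_1} - M_2|\nabla u|^{\alpha_2} \le \Delta_m u \le c_0 u^q|\nabla u|^p + M_1 u^{\alpha_1} + M_2|\nabla u|^{\alpha_2} + \lambda$, explicit candidates for the functions $c(x)$, $d(x)$, $f(x)$ appearing in the inequality $|\Delta_m u| \le c(x)|\nabla u|^{m-1} + d(x)u^{m-1} + f(x)$, and then to verify — using the integral estimates \eqref{5.2}, \eqref{5.3}, \eqref{5.11} of Lemma \ref{lem5.1} — that these functions lie in the required Lebesgue spaces with the correct scaling.

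\textbf{Step 1: Rewrite the nonlinear terms as coefficients.} From the two-sided inequality one gets $|\Delta_m u| \le c_0 u^q|\nabla u|^p + M_1 u^{\alpha_1} + M_2|\nabla u|^{\alpha_2} + \lambda$. I split the term $u^q|\nabla u|^p$ into $\big(u^q|\nabla u|^{p-m+1}\big)|\nabla u|^{m-1}$, so that $u^q|\nabla u|^{p-m+1}$ becomes part of the coefficient $c(x)$; I split $M_1 u^{\alpha_1} = \big(M_1 u^{\alpha_1-m+1}\big)u^{m-1}$, so that $M_1 u^{\alpha_1-m+1}$ becomes part of $d(x)$; I write $M_2|\nabla u|^{\alpha_2}$ either as $\big(M_2|\nabla u|^{\alpha_2-m+1}\big)|\nabla u|^{m-1}$ (contributing to $c(x)$) when $\alpha_2 \ge m-1$, which holds by \eqref{F}; and I absorb $\lambda$ (or rather $R^m\lambda$ after rescaling) into $f(x)$. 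Thus I can take, for instance,
\begin{equation*}
c(x) = c_0 u(x)^q|\nabla u(x)|^{p-m+1} + M_2|\nabla u(x)|^{\alpha_2-m+1}, \quad d(x) = M_1 u(x)^{\alpha_1-m+1}, \quad f(x) = \lambda.
\end{equation*}

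\textbf{Step 2: Verify the integrability conditions.} The core of the proof is to check that $c \in L^{\sigma'}(B_{2R})$ with $\sigma' > N$ and $d \in L^{\sigma}(B_{2R})$ with $\sigma \in (N/m, N/(m-1))$, and that the rescaled norms $R^{1-N/\sigma'}\|c\|_{L^{\sigma'}}$ and $R^{m-N/\sigma}\|d\|_{L^{\sigma}}$ stay bounded by a constant independent of $R \in (0,R_0)$. For the $u^q|\nabla u|^{p-m+1}$ piece I apply Hölder's inequality to split $\int_{B_{2R}} \big(u^q|\nabla u|^{p-m+1}\big)^{\sigma'}$ into a factor involving $\int u^{\gamma}$ (controlled by \eqref{5.2}) and a factor involving $\int u^{\gamma-\alpha_1-1}|\nabla u|^m$ (controlled by \eqref{5.11}), choosing the Hölder exponents and the free parameter $\gamma < \alpha_1$ so that the product exponent on $u$ works out and the resulting power of $R$ matches $N - \sigma'(1-N/\sigma')\cdot(\text{something})$; this is exactly the computation advertised as \eqref{0.666}, \eqref{5.2}, \eqref{5.11} in the introduction. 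For the $|\nabla u|^{\alpha_2-m+1}$ piece and the $u^{\alpha_1-m+1}$ piece I use \eqref{5.3} and \eqref{5.2} respectively. The key point is that the subcriticality condition $q(N-m)+p(N-1)<N(m-1)$ in \eqref{F} (equivalently $\alpha_1 < m_*-1$) is precisely what makes $\sigma'$ choosable $> N$ and $\sigma$ choosable in $(N/m, N/(m-1))$; I must also use $\alpha_2 < \alpha_1 m/(\alpha_1+1)$ and $p+q-m+1>0$ here.

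\textbf{Step 3: Apply Lemma \ref{lem5.2} and conclude.} With $c, d, f$ verified, Lemma \ref{lem5.2} gives $\sup_{B_R} u \le C\big[\inf_{B_R} u + R^{m-N/\sigma}\|f\|_{L^{\sigma}}\big]$. Since $f \equiv \lambda$, $R^{m-N/\sigma}\|\lambda\|_{L^{\sigma}(B_{2R})} = \lambda R^{m-N/\sigma}\cdot C R^{N/\sigma} = C\lambda R^m$, which yields $\sup_{B_R} u \le C[\inf_{B_R} u + R^m\lambda]$, as claimed. The constant $C$ depends only on the listed parameters and on $R_0$ (through $R \le R_0$), because all the $R$-powers in Steps 1–2 were arranged to cancel against the rescaling factors.

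\textbf{Main obstacle.} The delicate part is Step 2: one must choose the Hölder exponents in the estimate of $\int_{B_{2R}}(u^q|\nabla u|^{p-m+1})^{\sigma'}$, the auxiliary exponent $\gamma$, and $\sigma'$ itself \emph{simultaneously}, so that (a) $\gamma$ stays in the admissible range $(0,\alpha_1)$ for Lemma \ref{lem5.1}, (b) $\sigma' > N$, and (c) the $R$-exponent produced equals $\sigma'(1 - N/\sigma') + N$ up to the sign needed for scale-invariance — and the same juggling must be done for $d$ with $\sigma \in (N/m, N/(m-1))$. Showing that all these inequalities are jointly solvable is exactly where the hypothesis $q(N-m)+p(N-1)<N(m-1)$ together with $m-1 < \alpha_2 < \alpha_1 m/(\alpha_1+1)$ gets consumed; if the subcritical window were empty the feasible set would be empty too. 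Everything else (the Bochner-free Step 1 rewriting, the final substitution $\|f\|_{L^\sigma} = \lambda|B_{2R}|^{1/\sigma}$) is routine.
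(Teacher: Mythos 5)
Your proposal follows essentially the same route as the paper's proof: the same decomposition $c(x)=c_0u^q|\nabla u|^{p-m+1}+M_2|\nabla u|^{\alpha_2-m+1}$, $d(x)=M_1u^{\alpha_1-m+1}$, $f(x)=\lambda$, verification of the rescaled norms via the estimates \eqref{5.2}, \eqref{5.3}, \eqref{5.11} of Lemma \ref{lem5.1} (with the Hölder splitting of $\int_{B_{2R}}(u^q|\nabla u|^{p-m+1})^{\sigma'}$ exactly as in \eqref{0.666}), and conclusion by Lemma \ref{lem5.2}, with the subcriticality condition $q(N-m)+p(N-1)<N(m-1)$ consumed in choosing $\sigma'$ near $N$ and $\sigma$ near $N/m$. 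The exponent bookkeeping you flag as the main obstacle is carried out explicitly in the paper (the exponents $\beta$ and $\beta_1$ in \eqref{0.666}--\eqref{0.66666}), and your sketch correctly identifies where each hypothesis of \eqref{F} enters.
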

\begin{proof}
We apply Lemma \ref{lem5.2} with
$$f(x)=\lambda,\ \  d(x)= M_1u^{\alpha_1-m+1} $$
and
$$c(x)=M_2|\nabla u|^{\alpha_2-m+1}+c_0bu^q|\nabla u|^{p-m+1}\triangleq c_1(x)+c_2(x).$$
So it suffices to prove that
\begin{equation}\label{5.12}
R^{m-\frac{N}{\sigma}}\|f\|_{L^{\sigma}}<C\lambda R^{m},
\end{equation}
\begin{equation}\label{5.13}
R^{1-\frac{N}{\sigma'}}\|c\|_{L^{\sigma'}}<C,
\end{equation}
and
\begin{equation}\label{5.14}
R^{m-\frac{N}{\sigma}}\|d\|_{L^{\sigma}}<C
\end{equation}
with $C>0$.

The inequality \eqref{5.12} would be obtained by a direct calculation
\begin{equation*}
R^{m-\frac{N}{\sigma}}\|f\|_{L^{\sigma}}=R^{m-\frac{N}{\sigma}}\left[\int_{B_{2R}}\lambda^\sigma\right]^{\frac{1}{\sigma}}=CR^{m-\frac{N}{\sigma}}\lambda(2R)^{\frac{N}{\sigma}}=C\lambda R^m.
\end{equation*}
We now start to establish \eqref{5.13}. It is clear that
\begin{equation}\label{5.15}
R^{1-\frac{N}{\sigma'}}\|c\|_{L^{\sigma'}}=R^{1-\frac{N}{\sigma'}}\|c_1+c_2\|_{L^{\sigma'}}\leq R^{1-\frac{N}{\sigma'}}\|c_1\|_{L^{\sigma'}}+R^{1-\frac{N}{\sigma'}}\|c_2\|_{L^{\sigma'}}.
\end{equation}
Let us estimate each term appearing on the right side of \eqref{5.15}.

First, we focus on the first term on the right side of \eqref{5.15}. A simple calculation implies that
\begin{equation*}
R^{1-\frac{N}{\sigma'}}\|c_1\|_{L^{\sigma'}}=M_2R^{1-\frac{N}{\sigma'}}\left[\int_{B_{2R}}|\nabla u|^{(\alpha_2-m+1)\sigma'}\right]^{\frac{1}{\sigma'}}\leq M_2CR^{1-\frac{(\alpha_1+1)(\alpha_2-m+1)}{\alpha_1-m+1}},
\end{equation*}
the last inequality holds due to \eqref{5.3} if we verify that
\begin{equation}\label{5.16}
0<(\alpha_2-m+1)\sigma'<\frac{m\alpha_1}{\alpha_1+1}.
\end{equation}
Choosing $\sigma'>N$ close enough to $N$, it suffices to show that
$$0<(\alpha_2-m+1)N<\frac{m\alpha_1}{\alpha_1+1}.$$
Since $\alpha_2<\frac{m\alpha_1}{\alpha_1+1}$, we just need to verify
\begin{equation}\label{0.33333}
\left[\frac{m\alpha_1}{\alpha_1+1}-m+1\right]N<\frac{m\alpha_1}{\alpha_1+1},\ \mathrm{i.e.,}\ \alpha_1<\frac{N(m-1)}{N-m} ,  \m  1<m<N.
\end{equation}
The assumptions in \re{F} imply that \eqref{0.33333} holds.
Furthermore,
$$1-\frac{(\alpha_1+1)(\alpha_2-m+1)}{\alpha_1-m+1}>0\ \ \text{is equivalent to our assumption }\ \alpha_2<\frac{m\alpha_1}{\alpha_1+1}.$$
Thus there exists $C>0$ such that
\begin{equation}\label{5.17}
R^{1-\frac{N}{\sigma'}}\|c_1\|_{L^{\sigma'}}\leq M_2CR^{1-\frac{(\alpha_1+1)(\alpha_2-m+1)}{\alpha_1-m+1}}\leq C
\end{equation}
for $R\leq R_0$.

Second, we focus on the second term on the right side of \eqref{5.15}. If $b=0$, then $c_2(x)\equiv 0$ and there is nothing to proof. So we assume that $b=1$. By the H\"{o}lder inequality, \eqref{5.2} and \eqref{5.11} with $\kappa=\alpha_1$,  assuming the conditions of Lemma \ref{lem5.1} are satisfied, we have
\begin{equation}\label{0.666}
\int_{B_{2R}}\left[u^q|\nabla u|^{p-m+1}\right]^{\sigma'}
\leq\left[\int_{B_{2R}}u^{q}|\nabla u|^{p}\right]^{\frac{(p-m+1)\sigma'}{p}}\left[\int_{B_{2R}}u^{\beta}\right]^{1-\frac{(p-m+1)\sigma'}{p}}
\le  CR^\eta,
\end{equation}
where
\begin{equation}\label{T41}
\begin{split}
 & \eta = \Big(N-\frac{m\alpha_1}{\alpha_1-m+1}\Big)\frac{(p-m+1)\sigma'}{p} \\
 & \hspace{3em}+\Big(N-\frac{m\beta}{\alpha_1-m+1}\Big)\Big[1-\frac{(p-m+1)\sigma'}{p}\Big], \\
 & \beta=\frac{q\sigma'(m-1)}{p-(p-m+1)\sigma'}.
\end{split}
\end{equation}
We need to verify the conditions of  Lemma \ref{lem5.1}, that is, $0<\beta< \frac{N(m-1)}{N-m}$, i.e.,
\begin{equation*}
0<\frac{q\sigma'(m-1)}{p-(p-m+1)\sigma'}< \frac{N(m-1)}{N-m}.
\end{equation*}
Choosing $\sigma'>N$ close enough to $N$, we just need to show that
\begin{equation*}
0<\frac{qN(m-1)}{p-(p-m+1)N}\triangleq A<\frac{N(m-1)}{N-m}.
\end{equation*}
It follows from $q(N-m)+p(N-1)<N(m-1)$ and $m<N$ that
\begin{equation}\label{T50}
p(N-1)<N(m-1),\ \mathrm{i.e.,}\ p-(p-m+1)N>0.
\end{equation}
Then $A>0$. On the other hand, by \eqref{T50}, the inequality $A<\frac{N(m-1)}{N-m}$ is equivalent to
$q(N-m)+p(N-1)<N(m-1)$. It follows from \eqref{0.666}, \eqref{T41} that
\begin{equation}\label{Y11}
\begin{split}
R^{1-\frac{N}{\sigma'}}\|c_2\|_{L^{\sigma'}}&=c_0R^{1-\frac{N}{\sigma'}}\left[\int_{B_{2R}}\left(u^q|\nabla u|^{p-m+1}\right)^{\sigma'}\right]^{\frac{1}{\sigma'}}
\le CR^{1-\frac{N}{\sigma'}+\frac{\eta}{\sigma'}}, 
\end{split}
\end{equation}
where
\beaa
 1-\frac{N}{\sigma'}+\frac{\eta}{\sigma'} & = & 1-\frac{m\alpha_1(p-m+1)}{p(\alpha_1-m+1)} - \frac{m\beta}{\sigma'(\alpha_1-m+1)}
 +\frac{m\beta(p-m+1)}{p(\alpha_1-m+1)} \\
 & = & 1- \frac{m\alpha_1\sigma'(p-m+1)+m\beta[p-(p-m+1)\sigma']}{p\sigma'(\alpha_1-m+1)} \\
 & = &
 1-\frac{m\alpha_1(p-m+1)+mq(m-1)}{p(\alpha_1-m+1)}\\
 &= &
 \frac{(m-1)[(m-p)\alpha_1-(p+mq)]}{p(\alpha_1-m+1)}\\
  & \geq & 0  \m\text{since }
\alpha_1\geq\frac{p+mq}{m-p}.
\eeaa
Thus, when $R\leq R_0$, we have
\begin{equation}\label{0.66666}
R^{1-\frac{N}{\sigma'}}\|c_2\|_{L^{\sigma'}}\leq C.
\end{equation}
Combining \eqref{0.66666}, \eqref{5.17} and \eqref{5.15}, we deduce \eqref{5.13}.

Now, we prove \eqref{5.14}. A simple calculation implies that,
if $0<(\alpha_1-m+1)\sigma<\frac{N(m-1)}{N-m}$, then
\begin{equation}\label{5.20}
R^{m-\frac{N}{\sigma}}\|d\|_{L^{\sigma}}=M_1R^{m-\frac{N}{\sigma}}\left[\int_{B_{2R}}u^{(\alpha_1-m+1)\sigma}\right]^{\frac{1}{\sigma}}\leq M_1R^{m-\frac{N}{\sigma}}R^{\frac1\sigma\big(N-\frac{m\sigma(\alpha_1-m+1)}{\alpha_1-m+1}\big)}=M_1,
\end{equation}
we deduce \eqref{5.14}. Now we   verify
\begin{equation}\label{5.21}
0<(\alpha_1-m+1)\sigma< \frac{N(m-1)}{N-m}.
\end{equation}
Choosing $\sigma>\frac{N}{m}$ close enough to $\frac{N}{m}$, it suffices to show that
\begin{equation}\label{0.77777}
(\alpha_1-m+1)\frac{N}{m}<\frac{N(m-1)}{N-m},\   \mathrm{i.e.,}\
\alpha_1<\frac{N(m-1)}{N-m},
\end{equation}
which is already evaluated in \re{0.33333}. The proof is complete.
\end{proof}

\begin{rem}\label{rem5.2}
 Theorem \ref{thm:5.3} is a generalization of Theorem 2.3 in \cite{R}, which considered the  special case $b=0$
The ranges of parameters are consistent with those of this paper. Furthermore, Theorem 2.3 in \cite{R} is a generalization of Theorem 4.1(b) in \cite{SZ}, which considered the inequality
$$u^{p-1}-u^{m-1}-|\nabla u|^{m-1}\leq-\Delta_mu\leq \Lambda\left[u^{s-1}+u^{m-1}+|\nabla u|^{m-1}\right]$$
with $m<p\leq s<m_*$. Clearly, only the case $\alpha=m-1$ in \cite{R} is considered in \cite{SZ}.
\end{rem}

\subsection{A priori estimates}
\paragraph{\ \ \ This subsection is devoted to study the following problem}
\begin{equation}\label{6.1}
\left\{\begin{array}{ll}
-\Delta_mu=f(x,u,\nabla u)+\lambda,&x\in\Omega,\\
u(x)=0,&x\in\partial\Omega,
\end{array}\right.
\end{equation}
where $\lambda>0$ will be used to make a compact homotopy in next subsection. We shall provide a priori estimates by using Harnack inequalities, the blow-up procedure and the Liouville-type theorem.

The following inequality is an easy consequence of the Picone identity for the $m$-Laplacian.  The version stated in  \cite[Lemma 3.1]{R} requires $h(x)\ge 0$, this assumption guarantees that $\phi_1^m/u^{m-1}\in W_0^{1,m}(\Omega)$.
  If ``$h(x)\ge 0$'' is not assumed, then the following version of the estimate holds.
\begin{lem}\label{lem111}
Let $u$ be a positive solution of the problem
\begin{equation*}
\left\{\begin{array}{ll}
-\Delta_mu=h(x), 
&x\in\Omega,\\
u(x)=0,&x\in\partial\Omega.
\end{array}\right.
\end{equation*}
Then
\begin{equation*}
  \sup_{\epsilon>0}\int_{\Omega}h(x)\frac{\phi_1^m}{(u+\epsilon)^{m-1}}\leq\lambda_1\int_{\Omega}\phi_1^m,
\end{equation*}
where $\dis\lambda_1=\inf_{v\in W^{1,m}_0(\Omega)} \int_\Omega |\nabla v|^m \Big/\int_\Omega |v|^m $ is the first eigenvalue of the $m$-Laplacian with Dirichlet boundary conditions, and $\phi_1$ is the associated eigenfunction.
\end{lem}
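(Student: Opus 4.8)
The plan is to derive the inequality from Picone's identity for the $m$-Laplacian combined with the variational characterization of $\lambda_1$. Recall first the pointwise Picone-type inequality: for any differentiable $v\geq0$ and $u>0$,
\begin{equation*}
|\nabla v|^m-|\nabla u|^{m-2}\nabla u\cdot\nabla\!\left(\frac{v^m}{u^{m-1}}\right)=|\nabla v|^m+(m-1)\frac{v^m}{u^m}|\nabla u|^m-m\,\frac{v^{m-1}}{u^{m-1}}|\nabla u|^{m-2}\nabla u\cdot\nabla v\geq0,
\end{equation*}
which follows by expanding $\nabla(v^m/u^{m-1})$ and then applying Young's inequality (with exponents $m$ and $m/(m-1)$) together with the Cauchy--Schwarz inequality $|\nabla u|^{m-2}\nabla u\cdot\nabla v\leq|\nabla u|^{m-1}|\nabla v|$ to the last term; moreover equality holds, at any point where $\nabla u$ or $\nabla v$ does not vanish, exactly when $\nabla(v/u)=0$. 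First I would specialize to $v=\phi_1$, the positive first Dirichlet eigenfunction, which solves $-\Delta_m\phi_1=\lambda_1\phi_1^{m-1}$ in $\Omega$ with $\phi_1=0$ on $\partial\Omega$.

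The next step is to verify that $w:=\phi_1^m/u^{m-1}$ is an admissible test function, i.e.\ $w\in W_0^{1,m}(\Omega)\cap L^\infty(\Omega)$. Since $u$ and $\phi_1$ are positive solutions of homogeneous Dirichlet problems on the smooth bounded domain $\Omega$, standard elliptic regularity (they belong to $C^{1,\alpha}(\overline\Omega)$) together with the Hopf boundary-point lemma gives two-sided bounds $c\,\mathrm{dist}(x,\partial\Omega)\leq u(x),\phi_1(x)\leq C\,\mathrm{dist}(x,\partial\Omega)$ near $\partial\Omega$; consequently $\phi_1/u$ is bounded, $w\sim\mathrm{dist}(x,\partial\Omega)$ and $\nabla w$ stays bounded near $\partial\Omega$, so $w$ is a legitimate test function in the weak formulations of both equations. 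Testing $-\Delta_m u=h$ against $w$ yields
\begin{equation*}
\int_\Omega |\nabla u|^{m-2}\nabla u\cdot\nabla\!\left(\frac{\phi_1^m}{u^{m-1}}\right)=\int_\Omega h(x)\,\frac{\phi_1^m}{u^{m-1}},
\end{equation*}
while testing $-\Delta_m\phi_1=\lambda_1\phi_1^{m-1}$ against $\phi_1$ gives $\int_\Omega|\nabla\phi_1|^m=\lambda_1\int_\Omega\phi_1^m$.

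Finally I would integrate the pointwise Picone inequality over $\Omega$ and insert the two identities above:
\begin{equation*}
0\leq\int_\Omega\left(|\nabla\phi_1|^m-|\nabla u|^{m-2}\nabla u\cdot\nabla\!\left(\frac{\phi_1^m}{u^{m-1}}\right)\right)=\lambda_1\int_\Omega\phi_1^m-\int_\Omega h(x)\,\frac{\phi_1^m}{u^{m-1}},
\end{equation*}
which is precisely the asserted inequality. For the equality case, equality forces the nonnegative integrand to vanish a.e., hence $\nabla(\phi_1/u)=0$ a.e.\ in $\Omega$; since $\Omega$ is connected this means $\phi_1\equiv a^{-1}u$ for some constant $a>0$, i.e.\ $u=a\phi_1$, and conversely such $u$ obviously produces equality. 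I expect the only genuinely delicate point to be the admissibility of the test function $\phi_1^m/u^{m-1}$ up to the boundary, which is exactly why the Hopf lemma and the boundary regularity of $u$ and $\phi_1$ are invoked; everything else is a routine integration by parts.
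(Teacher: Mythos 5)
Your argument is correct and is exactly the route the paper intends: the paper gives no proof of this lemma, citing \cite[Lemma 3.1]{R} and noting it is ``an easy consequence of the Picone's identity for the $m$-Laplacian,'' which is precisely your scheme of integrating the pointwise Picone inequality with $v=\phi_1$, testing $-\Delta_m u=h$ against $\phi_1^m/u^{m-1}$ (admissible by $C^{1,\alpha}$ regularity and Hopf's lemma), and using $\int_\Omega|\nabla\phi_1|^m=\lambda_1\int_\Omega\phi_1^m$. The equality discussion via $\nabla(\phi_1/u)=0$ a.e.\ on the connected domain is also the standard Allegretto--Huang argument, so no gaps.
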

\begin{proof}
Integrating by parts, we obtain, for any $\epsilon>0$,
 \beaa
 \lefteqn{ \int_{\Omega}h(x)\frac{\phi_1^m}{(u+\epsilon)^{m-1}}  =
 \int_\Omega |\nabla u|^{m-2}\nabla u\cdot \nabla\left(\frac{\phi_1^m}{(u+\epsilon)^{m-1}}\right) }\\
  & = & \int_\Omega -(m-1) (u+\epsilon)^{-m}|\nabla u|^{m} \phi_1^m
  + m (u+\epsilon)^{-(m-1)}\phi_1^{m-1} |\nabla u|^{m-2}\nabla u\cdot \nabla \phi_1.
 \eeaa
Using the Young's inequality $ab\le \frac{m-1}m a^{\frac{m}{m-1}} + \frac1m b^m$, we find that
\beaa
(u+\epsilon)^{-(m-1)}\phi_1^{m-1} |\nabla u|^{m-2}\nabla u\cdot \nabla \phi_1
& \le & \Big((u+\epsilon)^{-(m-1)}\phi_1^{m-1} |\nabla u|^{m-1}\Big)|\nabla \phi_1 |\\
& \le & \frac{m-1}m   (u+\epsilon)^{-m}|\nabla u|^{m} \phi_1^m
 + \frac1m |\nabla \phi_1|^m.
\eeaa
Thus
 \be
 \int_{\Omega}h(x)\frac{\phi_1^m}{(u+\epsilon)^{m-1}} \le \int_\Omega  |\nabla \phi_1|^m
  =   \lambda_1\int_{\Omega}\phi_1^m,
 \ee
 where the last equality comes from the fact that $\phi_1$ is the first eigenfunction. The desired estimate follows immediately.
\end{proof}

The following result shows the nonexistence of positive weak solutions of \eqref{6.1} for $\lambda$ large.
\begin{prop}\label{prop6.1}
Assume that   $f(x,u,\eta)\ge u^{\alpha_1}-M_2|\eta|^{\alpha_2}$, where $M_1, M_2>0$ and the exponents $\alpha_1$, $\alpha_2$ satisfy $\alpha_1>m-1$ and $m-1<\alpha_2<\frac{m\alpha_1}{\alpha_1+1}$. Then there exists $\lambda_0>0$
depending on $m$, $\alpha_1$, $\alpha_2$, $M_1$,  $M_2$ and $\Omega$ such that the problem \eqref{6.1} admits no positive weak solutions for any $\lambda\geq\lambda_0$.
\end{prop}
\begin{proof}
We proceed by contradiction. Assume that $u$ is a positive weak solution of \eqref{6.1}. It follows from Lemma \ref{lem111} that,
for any $0<\epsilon\le 1$,
\begin{equation}\label{m1}
\begin{split}
\lambda_1\int_{\Omega}\phi_1^m&\geq\int_{\Omega}\left[f(x,u,\nabla u)+\lambda\right]\frac{\phi_1^m}{(u+
\epsilon)^{m-1}}\\
&\geq\int_{\Omega}\left[u^{\alpha_1}+\lambda\right]\frac{\phi_1^m}{(u+ \epsilon)^{m-1}}-\left[M_2|\nabla u|^{\alpha_2}\right]\frac{\phi_1^m}{(u+ \epsilon)^{m-1}}.
\end{split}
\end{equation}
Define
\begin{equation*}
 l(\epsilon,
\lambda)=\min\left\{\frac{\lambda+t^{\alpha_1}}{(t+\epsilon)^{m-1}}:t\geq0\right\}.
\end{equation*}

Clearly,
\begin{equation}\label{m2}
\lim_{\lambda\to\infty}l(\epsilon,\lambda)=\infty \m\text{ uniformly for $0\le \epsilon\le 1$}.
\end{equation}
It follows from \eqref{m1} that
\begin{equation*}
\lambda_1\int_{\Omega}\phi_1^m\geq l\int_{\Omega}\phi_1^m-\int_{\Omega}\left[M_2|\nabla u|^{\alpha_2}\right]\frac{\phi_1^m}{(u+\epsilon)^{m-1}},
\end{equation*}
i.e.,
\begin{equation}\label{m3}
(l-\lambda_1)\int_{\Omega}\phi_1^m\leq \int_{\Omega}\left[M_2|\nabla u|^{\alpha_2}\right]\frac{\phi_1^m}{(u+\epsilon)^{m-1}}.
\end{equation}
We claim that there exists $C>0$ independent of $\epsilon$ and $u$ such that
\begin{equation}\label{m10}
\int_{\Omega}\left[M_2|\nabla u|^{\alpha_2}\right]\frac{\phi_1^m}{(u+\epsilon)^{m-1}}\leq \int_{\Omega}\left[M_2|\nabla u|^{\alpha_2}\right]\frac{\phi_1^m}{u^{m-1}}\le C.
\end{equation}
By \eqref{m3} and the boundedness of $\phi_1$, we know that $l$ is bounded, which is a contradiction with \eqref{m2}.

In the following, we start to prove \eqref{m10}. Multiplying \eqref{6.1} by the test function $\psi=\frac{\phi_1^m}{(u+\epsilon)^{m-1}}$ and integrating, we obtain
\begin{equation}\label{m4}
\begin{split}
&\hspace{-2em}(m-1)\int_{\Omega}\phi_1^m(u+\epsilon)^{-m}|\nabla u|^m+\int_{\Omega}\phi_1^mu^{\alpha_1} (u+\epsilon)^{1-m}\\
\leq&m\int_{\Omega}\phi_1^{m-1}(u+\epsilon)^{-(m-1)}|\nabla u|^{m-1}|\nabla\phi_1|+M_2\int_{\Omega}\phi_1^m(u+\epsilon)^{1-m}|\nabla u|^{\alpha_2}.
\end{split}
\end{equation}
By the boundedness of $\phi_1$, $\nabla\phi_1$ and the Young's inequality in the form of
\begin{equation*}
x<\eta x^{\gamma}+C(\eta),\ \gamma>1,\ \eta>0,
\end{equation*}
we derive
\begin{equation}\label{m5}
\begin{split}
m\int_{\Omega}\phi_1^{m-1}(u+\epsilon)^{-(m-1)}|\nabla u|^{m-1}|\nabla\phi_1|\leq&Cm\int_{\Omega}\phi_1^{m-1}(u+\epsilon)^{-(m-1)}|\nabla u|^{m-1}\\
\leq&\frac{m-1}{2}\int_{\Omega}\phi_1^{m}(u+\epsilon)^{-m}|\nabla u|^{m}+C.
\end{split}
\end{equation}
A calculation which is similar to \eqref{5.7} yields
\begin{equation}\label{m8}
\begin{split}
M_2\int_{\Omega}&\phi_1^m(u+\epsilon)^{1-m}|\nabla u|^{\alpha_2} \\
&\leq  \frac{m-1}{4}\int_{\Omega}\phi_1^m(u+\epsilon)^{-m}|\nabla u|^m+  \frac 12\int_{\Omega}\phi_1^m(u+\epsilon)^{\alpha_1-m+1}+C.
\end{split}
\end{equation}
Combining \eqref{m4}-\eqref{m8}, we get
\be\label{m9}
\begin{split}
\frac{m-1}4&\int_{\Omega}\phi_1^m(u+\epsilon)^{-m}|\nabla u|^m+\frac 12\int_{\Omega}\phi_1^mu^{\alpha_1}(u+\epsilon)^{1-m} \\
 & \le C + \frac 12 \int_\Omega \phi_1^m  \left[(u+
 \epsilon)^{\alpha_1-m+1}-u^{\alpha_1}(u+\epsilon)^{-m+1}\right].
 \end{split}
\ee
Letting $\epsilon\to 0+$, we find that the estimates \eqref{m8}-\eqref{m9} implies that the claim \eqref{m10} holds. The proof is complete.
\end{proof}

The following theorem presents a strong maximum principle, which plays a vital role to get a priori estimates.
\begin{thm}\label{thm:6.1}
Let $u\in C^1(\Omega)$ be a nonnegative weak solution of
\begin{equation*}
-\Delta_mu+u^{\alpha}=f\ \ \ \mathrm{in}\ \Omega,
\end{equation*}
where $m>1$ and $f\geq0,\ \mathrm{a.e.\ in}\ \Omega$. If $\alpha>m-1$ and $u$ does not vanish identically, then $u$ is positive everywhere in $\Omega$.
\end{thm}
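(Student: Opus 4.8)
The plan is to recognize Theorem~\ref{thm:6.1} as an instance of V\'azquez's strong maximum principle for the $m$-Laplacian. The first step is to discard the inhomogeneous term: since $f\ge 0$ almost everywhere, the identity $-\Delta_m u+u^{\alpha}=f$ immediately gives the one-sided inequality
\begin{equation*}
\Delta_m u\le u^{\alpha}=:\beta(u)\qquad\text{in }\Omega
\end{equation*}
in the weak sense. Here $\beta:[0,\infty)\to[0,\infty)$, $\beta(s)=s^{\alpha}$, is continuous, nondecreasing and satisfies $\beta(0)=0$, so the nonnegative $C^{1}$ function $u$ fits precisely the hypotheses of the strong maximum principle.

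The second, and essential, step is to verify the Osserman--V\'azquez integrability criterion on $\beta$ near the origin. Writing $H(s):=\int_{0}^{s}\beta(t)\,dt=\dfrac{s^{\alpha+1}}{\alpha+1}$, one has, for any small $\delta>0$,
\begin{equation*}
\int_{0}^{\delta}\frac{ds}{\bigl(H(s)\bigr)^{1/m}}=(\alpha+1)^{1/m}\int_{0}^{\delta}s^{-\frac{\alpha+1}{m}}\,ds,
\end{equation*}
which diverges exactly when $\frac{\alpha+1}{m}\ge 1$, i.e.\ when $\alpha\ge m-1$. The hypothesis $\alpha>m-1$ therefore guarantees that this integral is $+\infty$, which is the precise condition ruling out compactly supported nonnegative solutions (dead cores). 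By V\'azquez's theorem, on each connected component of $\Omega$ either $u\equiv 0$ or $u>0$ everywhere; since $u$ does not vanish identically (and in the applications of this paper $\Omega$ is a ball, hence connected), the first alternative is excluded and $u>0$ throughout $\Omega$.

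If one prefers a self-contained argument rather than a citation, the same conclusion follows from a clopen-set dichotomy: the zero set $Z=\{x\in\Omega:u(x)=0\}$ is closed, and at any $x_{0}\in Z$ the point $x_{0}$ is an interior minimum of $u\ge 0$, so $\nabla u(x_{0})=0$; comparing $u$ from below on a small punctured ball about $x_{0}$ with the radial barrier $w=w(r)$ produced by the auxiliary ODE $(r^{N-1}|w'|^{m-2}w')'=r^{N-1}\beta(w)$ --- whose solvability with a nontrivial solution vanishing at a point is exactly what $\alpha>m-1$ (equivalently, divergence of the integral above) provides --- forces $u\equiv 0$ near $x_{0}$, so $Z$ is also open and hence empty. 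The main obstacle in either route is the degeneracy of $\Delta_m$ at critical points of $u$: the barrier/comparison has to be run in the weak $C^{1}$ setting near a point where $\nabla u=0$, and it is precisely this degeneracy that produces the sharp threshold $\alpha=m-1$ and makes the borderline behaviour nontrivial.
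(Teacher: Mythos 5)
Your proposal is correct and follows essentially the same route as the paper: reduce to Vázquez's strong maximum principle (the paper's reference \cite{V}) and verify its integrability criterion, namely that $\int_0^{\delta}\bigl(s^{\alpha+1}\bigr)^{-1/m}\,ds$ diverges precisely because $\alpha>m-1$. Your additional remarks on dropping $f\ge 0$, connectedness of $\Omega$, and the barrier alternative are fine but not needed beyond what the paper does.
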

\begin{proof}
In view of the classical work \cite[(12)-(13$'$), page 194]{V}, it suffices to prove that
\begin{equation*}
\int_0^1\left[u^{\alpha}u\right]^{-\frac{1}{m}} du=\infty.
\end{equation*}
In fact, it follows from $\alpha>m-1$ that $\frac{m-\alpha-1}{m}<0$. Thus,
\begin{equation*}
\begin{split}
\int_0^1\left[u^{\alpha}u\right]^{-\frac{1}{m}}du=\frac{m}{m-\alpha-1}u^{\frac{m-\alpha-1}{m}}\bigg|_{0}^{1}
=\frac{m}{m-\alpha-1}\left[1-\lim_{u\to0^+}u^{\frac{m-\alpha-1}{m}}\right]=\infty.
\end{split}
\end{equation*}
The conclusion holds.
\end{proof}
The following theorem presents a Liouville result on the half-space $\mathbb{R}_+^N$.
\begin{thm}\label{lem:6.1a}(See \cite[Theorem 1.1]{Z})
Assume that $B(u)$ is continuously differentiable for $u>0$ and that there exist positive constants $K>0$, $\gamma_1\in\left(m-1,\frac{N(m-1)+m}{N-m}\right)$ and $\gamma_2\in\left(0,\frac{N(m-1)+m}{N-m}\right)$ such that for $u>0$
\[
K^{-1}u^{\gamma_1}\leq B(u)\leq Ku^{\gamma_1}, \mm \gamma_2B(u)\geq u B'(u).
\]
Then the equation
\[
-\Delta_m u=B(u), \m x\in \mathbb{R}^N_+
\]
does not admit any non-negative non-trivial solutions $u$ on $\mathbb{R}^N_+$ vanishing on $\partial \mathbb{R}^N_+$.
\end{thm}

The following proposition will give a priori estimates about positive weak solutions of \eqref{6.1} when $\lambda$ is bounded.
\begin{prop}\label{prop6.2}
Assume that $f:\bar\Omega\times\mathbb R\times\mathbb R^N\to\mathbb R$ satisfies \re{F}.
If $u\in C^1(\Omega)$ is a positive weak solution of \eqref{6.1}, then there exists $C>0$ such that $\|u\|_{\infty}\le C$, where $\|\cdot\|_{\infty}$ is the uniform supremum norm.
\end{prop}
\begin{proof}
We proceed by contradiction.  Assume that there exist $\lambda_n>0$  such that $u_n$ are positive weak solutions of the problem
\begin{equation}\label{6.2}
\left\{\begin{array}{ll}
-\Delta_mu_n=f(x,u_n,\nabla u_n)+\lambda_n,&x\in\Omega,\\
u_n(x)=0,&x\in\partial\Omega,
\end{array}\right.
\end{equation}
and that $\|u_n\|_{\infty}\to\infty$ as $n\to\infty$. The existence of a positive solution and an application of  Proposition \ref{prop6.1} imply that $\lambda_n \le \lambda_0$.

Motivated by \cite[Proposition 3.3]{R}, we will make the blow-up procedure around a fixed point $y_0$ in $\Omega$. Let $x_n$ be points in $\Omega$ such that $u_n(x_n)=\|u_n\|_{\infty}=S_n$. Denote $\delta_n=\mathrm{dist}(x_n,\partial\Omega)$. In the following, we start to prove that
\begin{equation}\label{0.3}
u_n(y_0)\to\infty\ \mathrm{as}\ n\to\infty\ \mathrm{for\ some}\ y_0\in\Omega.
\end{equation}
The present paper proceeds the discussion in four steps.

\textbf{Step 1.} We claim that there exists $C>0$ such that $S_n^{\frac{\alpha_1-m+1}{m}}\delta_n\ge C.$

The result will be obtained by the blow-up argument around the points $x_n$ in which $u_n$ attain their maxima. Here and in the following, we denote $C$ by positive constants which are independent of $n$. Define
\begin{equation}\label{6.3}
w_n(y)= S_n^{-1}u_n(M_ny+x_n - \delta_n \vec e),
\end{equation}
where $M_n>0$ will be chosen later,  $z_n\in\partial\Omega$ is the projection such that $\dist(x_n, \partial\Omega)=|x_n-z_n|$ and $\dis\vec e = \frac{x_n-z_n}{\dist(x_n, \partial\Omega)}$, $|\vec e|=1$.
Clearly, the functions $w_n$ are well defined in
\[
 \Omega_n = \{ y: M_n y + x_n - \delta_n \vec e\in \Omega\}
\]
and satisfy
\begin{equation}\label{0.74}
w_n(M_n^{-1}\delta_n \vec e)=\|w_n\|_{L^\infty(\Omega_n)}=1,\ \ \
w_n(0)=S_n^{-1}u_n(x_n-\delta_n \vec e)= S_n^{-1} u(z_n)=0,
\end{equation}
  Rotate the system if necessary, we assume without loss of generality that
\be
 \text{$\vec e$ is in the positive $x_N$ direction.}
\ee
By \eqref{6.2} and \eqref{6.3}, we derive
\begin{equation}\label{6.4}
\begin{split}
-\Delta_mw_n(y)=&-S_n^{1-m}M_n^m\Delta_mu_n \\
=&S_n^{1-m}M_n^m\left[f(M_ny+x_n-\delta_n \vec e ,S_nw_n(y),S_nM_n^{-1}\nabla w_n(y))+\lambda_n\right]\\
\triangleq&\theta_n(y,w_n,\nabla w_n).
\end{split}
\end{equation}
It follows from \eqref{F} that
\begin{equation*}
\begin{split}
&\hspace{-1em}|\theta_n(y,w_n,\nabla w_n)|\\
\leq&S_n^{1-m}M_n^m\left[c_0(S_nw_n)^q|S_nM_n^{-1}\nabla w_n|^p+M_1(S_nw_n)^{\alpha_1}+M_2|S_nM_n^{-1}\nabla w_n|^{\alpha_2}+\lambda_n\right]\\
=&c_0S_n^{p+q-m+1}M_n^{m-p}w_n^q|\nabla w_n|^p+M_1S_n^{1-m+\alpha_1}M_n^mw_n^{\alpha_1}+M_2S_n^{1-m+\alpha_2}M_n^{m-\alpha_2}|\nabla w_n|^{\alpha_2}\\
&+\lambda_nS_n^{1-m}M_n^m.
\end{split}
\end{equation*}
We choose $S_n^{1-m+\alpha_1}M_n^m=1$, i.e., $M_n=S_n^{\frac{m-1-\alpha_1}{m}}$. Since $\alpha_1>m-1$, we find
\be
M_n\to0 \m\text{as } n\to\infty.
\ee
Then
\bea \label{k1}
&& k_n^1 \triangleq S_n^{p+q-m+1}M_n^{m-p} = S_n^{\frac{p+mq-(m-p)\alpha_1}{m}}\to 0
 \m\text{ as } n\to\infty \text{ if $\alpha_1>\frac{p+mq}{m-p}$}.
  \eea
Likewise
\be
 k_n^2 \triangleq S_n^{1-m+\alpha_2}M_n^{m-\alpha_2}=S_n^{\frac{\alpha_2(1+\alpha_1)-m\alpha_1}{m}}\to0\ \ \mathrm{as}\ n\to\infty \m\text{ since } \alpha_2 < \frac{m\alpha_1}{\alpha_1+1},
\ee
and
\be
k_n^3 \triangleq  S_n^{1-m}M_n^m=S_n^{-\alpha_1}\to0\ \ \mathrm{as}\ n\to\infty.
\ee
Hence
\be
\label{0.70}
\begin{split}
 |\theta_n(x,w_n,\nabla w_n)|
\leq &
c_0 k_n^1 w_n^q|\nabla w_n|^p+M_1 w_n^{\alpha_1}+M_2k_n^2|\nabla w_n|^{\alpha_2} +\lambda_nk_n^3\\
\le & C(|\nabla w_n|^p+|\nabla w_n|^{\alpha_2} +1).
\end{split}
\ee
Since $0<p<m, 0<\alpha_2<m$,  notice also that
$\|w_n\|_{L^\infty(\Omega_n)}\le 1$, the classical work \cite{L1} implies the $C^{1,\tau}$ regularity result up to boundary,
\begin{equation}\label{0.75}
\|\nabla w_n\|_{C^{1+\tau}(\bar \Omega_n)}\leq C\ \ \ \mathrm{in}\ \bar\Omega_n 
\end{equation}
for certain $C>0$ independent of $n$. Owing to \eqref{0.74}, \eqref{0.75} and mean value theorem, we have
$$1= w_n(M_n^{-1}\delta_n \vec e)-w_n(0)\leq\|\nabla w_n\|_{\infty}M_n^{-1}\delta_n\leq C M_n^{-1}\delta_n.$$
The proof of Step 1 is complete.

{\bf Step 2.} Further estimates on $\theta_n$.
We have assumed $f(x,u,\beta) = f_1(x,u,\beta) + f_2(x,u)+ f_3(x,\beta)  $; here, $x, u, \beta$ are regarded as independent variables. We assume that $b=1$. The case $b=0$ is trivial. For this step, we do not need to assume that $f_1$ is non-negative -- it is required only for the Harnack inequality in the next step. For $u\ge 0$,
\bea
   && |f_1(x,u,\beta)| \le c_0 |u|^q |\beta|^p, \\
   && \left\{
   \begin{split}&u^{\alpha_1} \le f_2(x,u) \le M_1 u^{\alpha_1} , \\
   &
   |f_2(x_1,u)-f_2(x_2,u)| \le \omega(|x_1-x_2|) u^{\alpha_1}, \m \omega(0+)=0,\\
     & u\frac{\p f_2(x,u)}{\p u} \le \g_2 f_2(x,u) \m\text{for some } \gamma_2\in\left(0,\frac{N(m-1)+m}{N-m}\right), \\
  & u^2 \left|\frac{\p^2 f_2(x,u)}{\p u^2} \right| \le C f_2(x,u),
   \end{split} \right.\label{B2} \\
   && |f_3(x,\beta)| \le M_2 |\beta|^{\alpha_2}.
\eea
Writing for the corresponding terms: $\theta_n(y, w,\eta) = \theta_{1,n}(y, w,\eta)+\theta_{2,n}(y, w )+\theta_{3,n}(y,  \eta)+ \la_n k_n^3$, we find, for any $K\ge 1$ and any $y\in \bar \Omega_n$, $0\le w\le1$, $|\eta|\le K$,
\bea
&& |\theta_{1,n}(y, w,\eta)|\le c_0 k_n^1w^q|\eta|^p \le c_0k_n^1 K^p\to 0 \text{ as } n\to \infty \text{ if }\alpha_1>\frac{p+mq}{m-p}.\\
&&|\theta_{3,n}(y, \eta)|\le M_2 k_n^2|\eta|^{\alpha_2} \le M_2 k_n^2 K^{\alpha_2} \to 0 \text{ as } n\to \infty .
\eea
For $\theta_{2,n}$, since $M_n\to 0$ and $\omega(0+)=0$, we have
\bea
&& |\theta_{2,n}(y_1, w)-\theta_{2,n}(y_2, w)|\le \omega( M_n|y_1-y_2|) w^{\alpha_1} \to 0 \m\text{ as } n\to \infty, \\
&& w^{\alpha_1} \le \theta_{2,n}(y,w) \le M_1 w^{\alpha_1}, \\
&& w \frac{\p\theta_{2,n}(y, w)}{\p w} = S_n^{1-m}M_n^m \cdot \Big(
\frac{u}{S_n} \Big)\cdot \Big( S_n \frac{\p f_2}{\p u}\Big) \le S_n^{1-m}M_n^m \gamma_2 f_2 = \gamma_2 \theta_{2,n}(y, w), \\
&& w^2 \left| \frac{\p^2 \theta_{2,n}(y, w)}{\p w^2}\right|
\le C \theta_{2,n}(y, w). \label{diffw}
\eea
It follows that, passing to a subsequence if necessary,  on any compact subset of $(y,w,\eta)$,
\be
 \theta_n(y,w,\eta) \to B(w) \text{ as } n\to \infty,
\ee
where $B(w)$ is independent of $y$ and $\eta$, and  \re{diffw} implies that $B(w)$ is continuously differentiable for $w>0$, and
\bea \label{ww}
&& w^{\alpha_1} \le B(w) \le M_1 w^{\alpha_1} , \mm
 w \frac{\p B(w)}{\p w} \le \gamma_2 B(w).
\eea

{\bf Step 3.} Show that $S_n^{\frac{\alpha_1-m+1}{m}}\delta_n\le C$
and hence $\delta_n\to 0$.

Clearly, for any $B_R$, \[\int_{B_R} u^\gamma \ge |B_R| \inf_{B_R} u^\gamma = \frac{\pi^{n/2} R^N}{\Gamma(1+n/2)}  \Big(\inf_{B_R} u\Big)^\gamma.  \]
Using also \re{5.2} of  Lemma \ref{lem5.1} and the Harnack inequality Theorem \ref{thm:5.3},
taking $R= \delta_n/2$ so that $B_{2R}(x_n) \subset \Omega$, we find
\be
 S_n \le C \Big[\inf_{B_R} u + R^m \lambda_n \Big]
  \le C\Big[ \Big(R^{-N}\int_{B_R} u^\gamma\Big)^{\frac1\gamma} + R^m \lambda_n\Big]
   \le CR^{\frac{-m}{\alpha_1-m+1}},
\ee
and the desired estimates follow.

{\bf Step 4.} Take the blowup limit.

$C^{1+\tau}$ estimates imply the compactness. We also have
$0<c\le M_n^{-1}\delta_n\le C$. Passing to a subsequence if necessary, we find that
$w_n \to w$ on any compact subset of $\bar { \mathbb{R}^N_+} \cap \bar \Omega_n$, $M_n^{-1}\delta_n\to \eta>0$, and $w$ satisfies,
\be
 \left\{\begin{split}
 & w(y) = 0 , \m y \in \partial \mathbb{R}^N_+, \\
 & 0 \le w(y) \le 1 = w(\eta \vec e), \m y\in \mathbb{R}^N_+,  \m \eta>0.
 \end{split}\right.
\ee
and 
\bea
 & \label{N1} - \Delta_m w  =  B(w),  \m y\in \mathbb{R}^N_+, \m\text{ if } \alpha_1>\frac{p+mq}{m-p},
 \eea
 where $B(w)$ satisfies \re{ww}, which is a contradiction to Theorem \ref{thm:6.1}.
\end{proof}

\begin{rem} Here are some observations:
\begin{enumerate}
\item
For the case $\alpha_1 = \frac{p+mq}{m-p}$, we have $k_n^1 \equiv 1$ in \re{k1}.
 A separate Liouville theorem on a half space is needed for this situation in order to carry out the proof.
\item If we take $f_2(x,u)=u^{\alpha_1}$, then $\alpha_1$ will need to satisfy $\alpha_1 < \frac{N(m-1)+m}{N-m}$, but we actually need $\alpha_1 < \frac{N(m-1)}{N-m}$ in other parts of the proof.
\end{enumerate}
\end{rem}

\subsection{Existence of positive weak solutions}

\paragraph{\ \ \ This subsection is devoted to prove existence of positive weak solutions of \eqref{P}. The proof is based on the following fixed point theorem.}
\begin{thm}\label{thm:7.1}(See \cite[Theorem 4.1]{R})
Let $K$ be a cone in a Banach space and $\psi:K\to K$ a compact operator such that $\psi(0)=0$. Assume that there exists $r>0$, verifying\\
$\mathrm{(a)}\ u\neq t\psi(u)\ \mathrm{for\ all\ }\|u\|=r,\ t\in[0,1].$\\
Assume also that there exist a compact homotopy $H:[0,1]\times K\to K$, and $R>r$ such that\\
$\mathrm{(b1)}\ \psi(u)=H(0,u)\ \mathrm{for\ all}\ u\in K.$\\
$\mathrm{(b2)}\ H(t,u)\neq u\ \mathrm{for\ any}\ \|u\|=R,\ t\in[0,1].$\\
$\mathrm{(b3)}\ H(1,u)\neq u\ \mathrm{for\ any}\ \|u\|\leq R.$\\
Let $D=\left\{u\in K:r<\|u\|<R\right\}.$ Then $\psi$ has a fixed point in $D$.
\end{thm}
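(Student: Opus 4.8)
The plan is to obtain Theorem~\ref{thm:7.1} from the fixed point index theory for compact maps on a cone, in the spirit of Amann and Dancer (see also Granas--Dugundji or Deimling). For a relatively open bounded subset $U\subset K$ on whose relative boundary $\partial_K U$ the map $\psi$ has no fixed point, I write $i(\psi,U,K)$ for the corresponding index, and I intend to use its four standard properties: normalization, homotopy invariance, additivity/excision, and the solution property. The first thing I would check is that $i(\psi,B_r\cap K,K)$, $i(\psi,B_R\cap K,K)$ and $i(\psi,D,K)$ are all admissible: $\psi$ is compact with $\psi(K)\subset K$ by hypothesis, and by $\mathrm{(a)}$ (with $t=1$) and by $\mathrm{(b1)}$--$\mathrm{(b2)}$ (with $t=0$) the map $\psi$ has no fixed point on $\{\|u\|=r\}\cap K$ or on $\{\|u\|=R\}\cap K$, which are precisely the relative boundaries involved (and together they form the relative boundary of $D$).

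Next I would compute the index at the small scale. Consider the homotopy $h(t,u)=t\,\psi(u)$ on $[0,1]\times K$; it lands in $K$ exactly because $K$ is a cone and $\psi(u)\in K$, it is compact, $h(1,\cdot)=\psi$, and $h(0,\cdot)\equiv 0$ since $\psi(0)=0$. By $\mathrm{(a)}$, $h(t,u)\neq u$ whenever $\|u\|=r$ and $t\in[0,1]$, so homotopy invariance applies on $B_r\cap K$, and combining it with normalization (the index of the constant map $0$, whose value lies in $B_r\cap K$, equals $1$) gives $i(\psi,B_r\cap K,K)=1$.

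Then I would compute the index at the large scale using the given homotopy $H$. By $\mathrm{(b2)}$, $H(t,\cdot)$ has no fixed point on $\{\|u\|=R\}\cap K$ for any $t\in[0,1]$, so homotopy invariance yields $i(H(0,\cdot),B_R\cap K,K)=i(H(1,\cdot),B_R\cap K,K)$; by $\mathrm{(b1)}$ the left side is $i(\psi,B_R\cap K,K)$, and by $\mathrm{(b3)}$ together with the solution property the right side is $0$, since $H(1,\cdot)$ has no fixed point anywhere in $\overline{B_R}\cap K$. Hence $i(\psi,B_R\cap K,K)=0$. Finally, since $B_r\cap K$ and $D$ are disjoint relatively open subsets of $B_R\cap K$ and $\psi$ has no fixed point on the leftover set $\big(\{\|u\|=r\}\cup\{\|u\|=R\}\big)\cap K$, additivity gives
\[
0=i(\psi,B_R\cap K,K)=i(\psi,B_r\cap K,K)+i(\psi,D,K)=1+i(\psi,D,K),
\]
so $i(\psi,D,K)=-1\neq 0$, and the solution property produces a fixed point of $\psi$ in $D$, which is the assertion.

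The computations above are essentially bookkeeping, and the only place where genuine care is needed --- what I expect to be the main obstacle --- is verifying the admissibility hypotheses of each index axiom rather than the axioms themselves: that every map that appears ($\psi$, the scalar homotopy $h$, and $H$) is compact and maps $K$ into $K$ (for $h$ this is exactly where "cone" must be invoked), and that the "no fixed point on the relative boundary" condition persists along both homotopies, which is precisely what $\mathrm{(a)}$ and $\mathrm{(b2)}$ are designed to encode. If one prefers not to quote the cone index as a black box, an alternative is to fix a retraction $\rho\colon X\to K$ of the ambient Banach space onto the cone and run the identical argument with the Leray--Schauder degree of $\psi\circ\rho$ on appropriate bounded open subsets of $X$; this is more laborious but uses only classical degree theory.
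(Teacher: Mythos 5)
The paper does not give a proof of Theorem \ref{thm:7.1} at all: it is quoted verbatim from \cite[Theorem 4.1]{R}, where it is established by exactly the kind of fixed point index (cone degree) computation you describe, following the Krasnosel'skii-type scheme cited in \cite{K}. Your argument is correct and is essentially that standard proof: index $1$ on $B_r\cap K$ via the admissible homotopy $h(t,u)=t\psi(u)$ (using that $K$ is a cone, $\psi(0)=0$ and hypothesis $\mathrm{(a)}$), index $0$ on $B_R\cap K$ via $H$ together with $\mathrm{(b1)}$--$\mathrm{(b3)}$ and the solution property, and then additivity/excision giving $i(\psi,D,K)=-1\neq0$; the admissibility checks you single out (compactness, invariance of $K$, no boundary fixed points along both homotopies) are indeed the only points requiring care, and they are all covered by the stated hypotheses.
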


Combining Theorem \ref{thm:7.1}, Propositions \ref{prop6.1} and \ref{prop6.2}, we will give the proof of the existence results.
\begin{proof}[Proof of Theorem \ref{thm:E}]
We use Theorem \ref{thm:7.1} to prove the existence results. Let $K\subset C^{1,\tau}(\overline{\Omega})$ be the subset of nonnegative functions. Clearly, $K$ is a cone. Define
$$\psi=T\circ N:C^{1,\tau}(\overline{\Omega})\to C^{1,\tau}(\overline{\Omega}),$$
where  
$$N:C^{1,\tau}(\overline{\Omega})\to L^\mu({\Omega}), \; (1 \ll \mu<\infty),\m N(u)=f(x,u,\nabla u).$$
Under our assumptions, $N$ actually maps a bounded set in $C^1(\bar\Omega)$ to a $L^\infty(\Omega)$ bounded set $\{ \|u\|_{L^\infty(\Omega)}\le K\}$.
 We also have, for a zero measure set $\mathscript{Z}\subset \Omega$,
\be
 f \text{ is  continuous on } (\Omega\setminus \mathscript{Z})\times R \times R^N.
\ee
In particular, $f$ is a Carath\'{e}odory function. It follows that the operator $N$ is continuous. Using also the fact that $C^{1,\tau}(\overline{\Omega})$ is compactly embedded in $C^1(\overline{\Omega})$, we derive that $N$ is compact. For any $v\in L^\mu( {\Omega})$
such that $\|v\|_{L^\infty(\Omega)}\le K$,  $T(v)\in C^{1,\tau}(\Omega)$ is the unique weak solution of the equation
\begin{equation}\label{7.1}
\left\{\begin{array}{ll}
-\Delta_mT(v)=v,&x\in\Omega,\\
v(x)=0,&x\in\partial\Omega.
\end{array}\right.
\end{equation}
It follows by the standard compactness and uniqueness argument that $T$ is a continuous operator. Thus, $\psi$ is compact.

First, we verify condition (a) by contradiction. Assume there exists $u\in K\backslash\{0\}$ such that $u=t\psi(u)$ for certain $t\in[0,1]$. Applying \eqref{7.1} with $v=N(u)$, we obtain
\begin{equation}\label{7.2}
-\Delta_m\psi(u)=f(x,u,\nabla u),\ \ \ x\in\Omega.
\end{equation}
Then $u=t\psi(u)$ satisfies
\begin{equation}\label{0.8}
-\Delta_mu=t^{m-1}f(x,u,\nabla u),\ \ \ x\in\Omega.
\end{equation}
Multiplying the equation \eqref{0.8} by $u$ and integrating, and applying the H$\ddot{\mathrm{o}}$lder inequality and the Sobolev embedding inequality, we obtain
\begin{equation}\label{0.4}
\begin{split}
\int_{\Omega}|\nabla u|^m=&t^{m-1}\int_{\Omega}f(x,u,\nabla u)u\\
\leq&\int_{\Omega}\left[c_0u^{q+1}|\nabla u|^p+M_1u^{\alpha_1+1}+M_2u|\nabla u|^{\alpha_2}\right]\\
\leq&c_0\left[\int_{\Omega}|\nabla u|^{p\cdot\frac{m}{p}}\right]^{\frac{p}{m}}\left[\int_{\Omega}u^{\frac{m(q+1)}{m-p}}\right]^{\frac{m-p}{m}}+M_1 \int_{\Omega}  u^{\alpha_1+1} \\
&+M_2\left[\int_{\Omega}|\nabla u|^{\alpha_2\cdot\frac{m}{\alpha_2}}\right]^{\frac{\alpha_2}{m}}\left[\int_{\Omega}u^{\frac{m}{m-\alpha_2}}\right]^{\frac{m-\alpha_2}{m}}\\
\leq& C\Bigg\{\left[\int_{\Omega}|\nabla u|^{m}\right]^{\frac{p+q+1}{m}}+ \left[\int_{\Omega}|\nabla u|^{m}\right]^{\frac{\alpha_1+1}{m}}+ \left[\int_{\Omega}|\nabla u|^{m}\right]^{\frac{\alpha_2+1}{m}} \Bigg\},
\end{split}
\end{equation}
\void{\blue Since the Harnack inequality is valid for this system, and the Harnack inequality is established by iterative Sobolev's embedding, we must have
 $\frac{m}{m-\alpha_2}\le m^*, \frac{m(q+1)}{m-p}\le m^*$. It follows that the
 last inequality holds. As a matter of fact, if these two inequalities cannot
 derived, then some proof must be wrong.}
 the last inequality holds if we verify that $\alpha_1+1\le m^*$, $\frac{m}{m-\alpha_2}\le m^*, \frac{m(q+1)}{m-p}\le m^*$. Indeed, since $\alpha_1<\frac{N(m-1)}{N-m}$, we have $\alpha_1+1<\frac{N(m-1)}{N-m}+1 =\frac{m(N-1)}{N-m}< m^*$. Similarly,
\[
\alpha_2<\frac{m\alpha_1}{\alpha_1+1}
= m-\frac{m}{\alpha_1+1}
< m-\frac{N-m}{N-1},
\]
so that
\[
\frac{m}{m-\alpha_2}<\frac{m(N-1)}{N-m}< m^*.
\]
By \re{F}, we have
\beaa
\frac{m(q+1)}{m-p} - m^*
 & = &\frac{m[q(N-m)-N(m-1)+pN-m]}{(N-m)(m-p)} \\
 & < &\frac{m[q(N-m)-N(m-1)+p(N-1)]}{(N-m)(m-p)}<0.
\eeaa
On the one hand, since $\frac{p+q+1}{m}>\frac{m+q
}{m} \ge1$, $\frac{\alpha_1+1}{m}>\frac{(m-1)+1}{m}=1$ and $\frac{\alpha_2+1}{m}>\frac{(m-1)+1}{m}=1$, it follows from \eqref{0.4} that
\begin{equation}\label{7.3}
\int_{\Omega}|\nabla u|^m\geq C_1.
\end{equation}
for a certain $C_1>0$. On the other hand, choosing $r>0$ sufficiently small such that $\|u\|_{C^{1,\tau}}\leq r$ would imply
\begin{equation*}
\int_{\Omega}|\nabla u|^m\leq \int_{\Omega}\|u\|_{C^{1,\tau}}^m\leq|\Omega|r^m < C_1,
\end{equation*}
which is a contradiction with \eqref{7.3}.

Second, we denote a compact homotopy
$$H:[0,1]\times K\to K,\ H(t,u)=T\circ\left[N(u)+t\lambda_0\right],$$
where $\lambda_0$ is given by Proposition \ref{prop6.1}. Condition (b1) is clearly true. Next, we verify condition (b2) by contradiction. Assume there exists $u\in K\backslash\{0\}$ such that $u=H(t,u)$. Applying \eqref{7.1} with $v=N(u)+t\lambda_0$, we obtain
\begin{equation*}
-\Delta_mu=f(x,u,\nabla u)+t\lambda_0,\ \ \ x\in\Omega.
\end{equation*}
It follows from $t<1$ that $t\lambda_0<\lambda_0$. Proposition \ref{prop6.2} implies that $\|u\|_{\infty}<C$. Using the $C^{1,\tau}$ estimate in \cite{L1}, we can find $R>0$ such that $\|u\|_{C^{1,\tau}}<R$, which is a contradiction with $\|u\|_{C^{1,\tau}}=R$. Similarly, in the validation of condition (b3), we obtain
\begin{equation}\label{0.5}
-\Delta_mu=f(x,u,\nabla u)+\lambda_0,\ \ \ x\in\Omega.
\end{equation}
Due to Proposition \ref{prop6.1}, problem \eqref{0.5} has no positive solution, which is a contradiction.

Last, Theorem \ref{thm:7.1} implies that there exists a positive weak solution $u$ such that $\psi(u)=u$. It follows from \eqref{7.2} that $u$ satisfies
\begin{equation*}
-\Delta_mu=f(x,u,\nabla u),\ \ \ x\in\Omega,
\end{equation*}
that is, \eqref{P} admits at least one positive solution. The proof is complete.
\end{proof}

\section{Weak Harnack inequality   }
The following theorem describes the weak Harnack inequality, the proof is referred in \cite{T1}.
\begin{thm}\label{thm:5.1}(See  \cite{T1})
Let $u$ be a nonnegative weak solution of inequality
$$ - \Delta_mu\ge 0\ \ \mathrm{in}\ \Omega.$$
Take $\gamma\in\left[1, m_*-1\right)$, where $m_* =\frac{m(N-1)}{N-m}$ and $R>0$ such that $B_{3R}\subset\Omega$. Then there exists a positive constant $C=C(N,m,\gamma)$(independent of $R$) such that
\begin{equation*}
\inf_{B_R}u\geq CR^{-\frac{N}{\gamma}}\|u\|_{L^{\gamma}(B_{2R})}.
\end{equation*}
\end{thm}
In the original Trudinger's paper \cite{T1}, $B_{3R}\subset \Omega$ was assumed. However, the theorem stated in \cite{FL1,R} contains an typographical error which required only $B_{2R}\subset \Omega$. A simple counter example is essentially the $x_1$ derivative of the fundamental solution of the  Laplacian, given by
$ u = \frac{x_1}{|x|^{N}}$. Let $B_R = \{|x-(1,0,\cdots,0)|<R\}$.
Then $u$ is positive on $B_1$ and $\Delta u = 0$.  Clearly
$\|u\|_{L^{\gamma}(B_{1})} = \infty$ for any $\gamma\ge \frac{N}{N-1}$, but
$m_*-1=2_*-1=\frac{N}{N-2}>\frac{N}{N-1}$, and $\inf_{B_{1/2}}u$ is clearly finite.
Given this, it seems more details are needed to show $B_{3R}\subset \Omega$ in the blowup argument for obtaining
the $L^\infty$ bounds in \cite{R}.

\section*{Acknowledgments}{This work was partially supported by the National Natural Science Foundation of China (No. 12071044). }

\end{CJK}
\end{document}